\numberwithin{equation}{section}
\crefname{equation}{}{}
\Crefname{equation}{}{}
\newtheorem{theorem}{Theorem}[section]
\newtheorem{lemma}[theorem]{Lemma}
\newtheorem{proposition}[theorem]{Proposition}
\theoremstyle{definition}
\newtheorem*{definition}{Definition}
\newtheorem{assumption}{Assumption}
\theoremstyle{remark}
\newtheorem{remark}[theorem]{Remark}
\newtheorem{example}[theorem]{Example}
\newcommand{\N}{\mathbb{N}}
\newcommand{\Z}{\mathbb{Z}}
\newcommand{\R}{\mathbb{R}}
\newcommand{\C}{\mathbb{C}}
\newcommand{\g}{\mathfrak{g}}
\DeclarePairedDelimiter\norm{\lVert}{\rVert}
\DeclareMathOperator{\supp}{supp}
\DeclareMathOperator{\id}{id}
\newcommand{\sloc}{\mathrm{sloc}}
\newcommand{\eigvp}[2]{\lambda_{\mathbf{#1}}^{#2}}
\newcommand{\ST}[1]{p_{\mathrm{ST}}(#1)}
   \def\MR#1{}
\begin{document}

\title[Spectral multipliers on two-step stratified Lie groups]{Spectral multipliers on two-step stratified Lie groups with degenerate group structure}
\author{Lars Niedorf}
\address{Department of Mathematics, University of Wisconsin-Madison,
480 Lincoln Dr, Madison, WI-53706, USA}
\email{niedorf@wisc.edu}
\date{\today}
\thanks{The author gratefully acknowledges the support by the Deutsche Forschungsgemeinschaft (DFG) through grant MU 761/12-1.}

\begin{abstract}
Let $L$ be a sub-Laplacian on a two-step stratified Lie group $G$ of topological dimension $d$. We prove new $L^p$-spectral multiplier estimates under the sharp regularity condition $s>d\left|1/p-1/2\right|$ in settings where the group structure of $G$ is degenerate, extending previously known results for the non-degenerate case. Our results include variants of the free two-step nilpotent group on three generators and Heisenberg--Reiter groups. The proof combines restriction type estimates with a detailed analysis of the sub-Riemannian geometry of $G$. A key novelty of our approach is the use of a refined spectral decomposition into caps on the unit sphere in the center of the Lie group.
\end{abstract}

\subjclass[2020]{42B15, 22E25, 22E30, 43A85}
\keywords{Heisenberg--Reiter group, two-step stratified Lie group, sub-Laplacian, spectral multiplier, restriction type estimate, sub-Riemannian geometry}

\maketitle

\section{Introduction}

Let $\Delta$ be the Laplacian on $\R^d$. Given a Borel measurable function $F:\R\to\C$, called \textit{spectral multiplier}, the operator $F(-\Delta)$ on $L^2(\R^n)$ defined by functional calculus corresponds to a radial Fourier multiplier operator via
\[
\smash{(F(-\Delta)f)^\wedge(\xi) = F(|\xi|^2)\hat f(\xi)},\quad f\in\mathcal S(\R^n).
\]
The Mikhlin--Hörmander theorem \cite{Hoe60} implies that $F(-\Delta)$ extends to a bounded operator on $L^p(\R^n)$ for all $p\in (1,\infty)$ whenever
\[
\|F\|_{L^2_{s,\sloc}} = \sup_{t>0} {\norm{F(t\,\cdot\,)\eta }_{L^2_s(\R^n)}} < \infty\quad \text{for some } s>n/2,
\]
where $\eta\in C_c^\infty(\R^n\setminus\{0\})$ is a generic nonzero cutoff function, and $L^2_s(\R^n)\subseteq L^2(\R^n)$ is the Sobolev space of fractional order $s\ge 0$. The regularity condition $s>n/2$ is sharp and cannot be decreased \cite{SiWr01}.
If we ask only for boundedness for fixed $p$, it is now known (see \cite{Fe73,Ch85,Se86}) that for $1< p \le 2(d+1)/(d+3)$, the operator $F(-\Delta)$ is bounded on $L^p(\R^d)$ under the weaker assumption that
\[
\|F\|_{L^2_{s,\sloc}} < \infty \quad\text{for some } s > d\left|1/p-1/2\right|.
\]
We call such results \textit{$p$-specific} spectral multiplier estimates. These estimates, in turn, imply $L^p$-boundedness of Bochner--Riesz means $(1+\Delta)_+^\delta$ under the sharp order of regularity $\delta>d\left|1/p-1/2\right|-1/2$.

Theorems of Mikhlin--Hörmander type are known in various settings. Let
\begin{equation}\label{eq:sub-Laplacian}
L = - \left(X_1^2 +\dots+X_{d_1}^2 \right)
\end{equation}
be a left-invariant sub-Laplacian on a stratified Lie group $G$, where $X_1,\dots,X_{d_1}$ are left-invariant vector fields corresponding to a basis of the first layer of a given stratification of $G$ (for further details, see \cref{sec:basics,sec:restriction}). Let $G$ be equipped with a left-invariant Haar measure, and let $L^p(G)$ be the associated Lebesgue space. The sub-Laplacian $L$ extends to a positive, self-adjoint operator on $L^2(G)$. Thus, for any spectral multiplier $F : \mathbb{R} \to \mathbb{C}$, the operator $F(L)$ on $L^2(G)$ can be defined by functional calculus, which is bounded on $L^2(G)$ if and only if $F\in L^\infty$.

For those sub-Laplacians, Christ \cite{Ch91} and Mauceri--Meda \cite{MaMe90} showed the sufficiency of the threshold $s>Q/2$, where $Q$ is the so-called \textit{homogeneous dimension} of the underlying Lie group, which is in general strictly greater than the topological dimension $d$.

However, the threshold $s>Q/2$ in \cite{Ch91,MaMe90} is not sharp in general. It was improved to $s>d/2$ by Müller--Stein \cite{MueSt94} and Hebisch \cite{He93} for sub-Laplacians on the subclass of Heisenberg (-type) groups. Recent extensions of this result include several subclasses within the class of two-step stratified Lie groups \cite{MaMue14N,Ma15}, Grushin type operators \cite{DaMa20,DaMa22b}, and sub-Laplacians on classes of compact manifolds \cite{CoKlSi11,AhCoMaMue20}. The threshold $s>d/2$ in all these results is sharp \cite{MaMueGo23}. For general stratified Lie groups it is not known whether the threshold $s>d/2$ is always sufficient, and even the two-step case is far from being completely understood.

In sub-elliptic situations, even less is known about the sufficiency of the $p$-dependent threshold $\smash{s> d\left|1/p-1/2\right|}$. A first partial result for Grushin operators was given by Chen and Ouhabaz \cite{ChOu16}, which was later completed in \cite{Ni22}. The method of \cite{Ni22} was subsequently adapted to sub-Laplacians on Heisenberg-type groups \cite{Ni24} and further extended to the broader class of Métivier groups \cite{Ni25M}, a subclass of two-step stratified Lie groups characterized by a specific non-degeneracy condition. However, given that Mikhlin--Hörmander type theorems with sharp regularity condition $s>d/2$ are available for subclasses of two-step stratified Lie groups even where this non-degeneracy condition fails \cite{MaMue13,MaMue14N,Ma15}, the question arises whether corresponding $p$-specific spectral multiplier estimates also hold for these groups, or at least for certain subclasses.

This article aims to explore the case of sub-Laplacians $\smash{L = - (X_1^2 +\dots+X_{d_1}^2 )}$ within a particular subclass of two-step stratified Lie groups where the group structure may exhibit degeneracy.

Recall that $X_1,\dots,X_{d_1}$ is a basis of the first layer of a stratification of $G$. If we also fix a basis for the second layer of the chosen stratification, in the associated exponential coordinates, the group law of the Lie group $G$ is given by     
\[
(x,u) \cdot (x',u')= \left(x+x',u+u'+\tfrac 1 2 \, (\vec Jx)^\intercal x'\right) ,\quad(x,u), (x',u') \in \R^{d_1} \times\R^{d_2},
\]
where $\smash{(\vec Jx)^\intercal x' = \sum_{k=1}^{d_2} (J_k x)^\intercal x' e_k \in \R^{d_2}}$, with $\{e_1,\dots,e_{d_2}\}$ denoting the standard basis in $\R^{d_2}$, and $J_1,\dots,J_{d_2}$ being some skew-symmetric matrices acting on $\R^{d_1}$. In these exponential coordinates, the left-invariant vector fields $X_j$ are given by
\[
X_j=\partial_{x_j} + \frac 1 2 \sum_{k=1}^{d_2} (J_k x)^\intercal e_j \, \partial_{u_k}.
\]

The most prominent example of a two-step stratified Lie-group is the \textit{Heisenberg group}, where $d_2=1$, $d_1=2n$ for some $n\in\N\setminus\{0\}$, and $J$ is given by
\[
J= \begin{pmatrix}
    0 & I_{n} \\
    - I_{n} & 0
\end{pmatrix}.
\]
The class of Heisenberg groups is contained in the larger classes of Heisenberg type and Métivier groups, which are defined as follows:

\begin{definition}
Let $J_\mu:=\mu_1 J_1+\dots+\mu_{d_2}J_{d_2}$ for $\mu\in\R^{d_2}$.
\begin{itemize}
    \item $G$ is called a \textit{Métivier group} if $J_\mu$ is invertible for all $\mu\in\R^{d_2}\setminus\{0\}$.
    \item $G$ is called a \textit{Heisenberg type group} if $J_\mu$ is orthogonal for $|\mu|=1$.
\end{itemize}
\end{definition}

The class of Heisenberg type groups is a strict subclass of the class of Métivier groups. By homogeneity, if $G$ is a Heisenberg type group, we have
\[
J_\mu^2 = - |\mu|^2 \,\mathrm{id}_{\R^{d_1}}\quad\text{for all } \mu\in \R^{d_2}.
\]
Passing to the larger class of Métivier groups means that we give up this rotation invariant structure. Then, the matrices $J_\mu$ are no longer orthogonal for $|\mu|=1$ and the spectral decomposition of $J_\mu$ into eigenspaces -- which depends on $\mu$ -- is much more complicated, see also \cite[Lemma~5]{MaMue14N}. The same applies to the spectral information of the sub-Laplacian $L$ which is much harder to control and much more challenging than in the case of Heisenberg type groups.

The class of Lie groups considered in this paper satisfies the following two assumptions, where we allow the group structure to be \textit{degenerate} in the sense that the matrices $J_\mu$ might have a non-trivial kernel. Recall that the matrices $J_\mu$ act on $\R^{d_1}$, which in exponential coordinates is the first layer of the stratification $\R^{d_1}\oplus \R^{d_2}$ of the Lie algebra of $G$.

\begin{assumption}\label{assumptionA}
There exist integers $r_1,\dots,r_N > 0$, and an orthogonal decomposition $\R^{d_1} = V_1 \oplus \dots \oplus V_N$ such that, if $P_1,\dots,P_N$ are the corresponding orthogonal projections, then, for all $\mu \in \R^{d_2}\setminus\{0\}$ and all $n \in \{1,\dots,N\}$,
\begin{enumerate}
    \item $J_\mu P_n = P_n J_\mu$,
    \item $\smash{J^2_\mu P_n}$ has rank $2r_n$ and exactly one non-zero eigenvalue.
\end{enumerate}
\end{assumption}

In other words, $\mathbb{R}^{d_1}$ decomposes into  $N$ blocks, and the matrices $J_\mu$ act independently on each block. Within each block, $J_\mu^2$ has exactly one non-zero eigenvalue. This condition was previously considered in \cite{Ma15}.

\begin{assumption}\label{assumptionB}
If $x,x'\in \ker J_{\mu^0}$ for some $\mu^0\in \R^{d_2}\setminus\{0\}$, then $(J x)^\intercal x'=0$.
\end{assumption}

In light of \cref{assumptionA}, this assumption is natural to consider. If \cref{assumptionA} holds and the matrices $J_\mu$ restricted to $V_n$ are have a kernel of dimension at most one, then \cref{assumptionB} is automatically satisfied. This follows from the skew-symmetry of $J_\mu$, which implies that $(J_\mu y)^\intercal y = 0$ for any $y \in \mathbb{R}^{d_1}$, and thus $(J_\mu x)^\intercal x' = 0$ for all $x, x' \in \ker J_\mu$ and all $\mu \in \mathbb{R}^{d_2}$ if $\dim \ker J_\mu \le 1$.

We shed some more light on Assumptions \ref{assumptionA} and \ref{assumptionB} in \cref{sec:basics}, where we compare it with the aforementioned Heisenberg type and Métivier groups, and give concrete examples of groups that satisfy this assumption. Our examples include a variant of the free two-step nilpotent Lie group on three generators and Heisenberg--Reiter groups, which were previously considered in \cite{MaMue13,Ma15}.

It should be emphasized that \cref{assumptionA} depends a priori on the choice of coordinates on $G$. However, it is actually possible to give a coordinate-independent formulation, see \cite{Ma15}. We proceed with our main result.

\begin{theorem}\label{thm:main}
Let $L=-(X_1^2+\dots+X_{d_1})$ be a sub-Laplacian on a two-step stratified Lie group $G\cong \R^d = \R^{d_1}\times \R^{d_2}$, where $X_1,\dots,X_{d_1}$ are the left-invariant vector fields associated with a basis of the first layer of a stratification of $G$.
Suppose that $G$ satisfies Assumptions \ref{assumptionA} and \ref{assumptionB} and that additionally
\begin{equation}\label{eq:cond-main}
\bar d_1 := 2\left(r_1+\dots+r_N\right)\ge d_2-1
\end{equation}
in \cref{assumptionA}. Let
\[
p_{\bar d_1,d_2} := \frac{\bar d_1(d_2-1) +2d_2(d_2+1)}{\bar d_1(d_2-1) + 3d_2^2+1}.
\]
Then, if $1\le p \le p_{d_1,d_2}$, the following statements hold:
\begin{enumerate}
\item \label{main-(1)}%
If $p>1$ and if $F:\R\to \C$ is a bounded Borel function such that
\[
\|F\|_{L^2_{s,\sloc}} < \infty
\quad\text{for some } s > d \left(1/p - 1/2\right),
\]
then the operator $F(L)$ is bounded on $L^p(G)$, and
\[
\norm{F(L)}_{L^p\to L^p} \le C_{p,s} \|F\|_{L^2_{s,\sloc}}.
\]
\item \label{main-(2)}%
For any $\delta> d \left(1/p - 1/2\right)- 1/2$, the Bochner--Riesz means $(1-tL)^\delta_+$, $t\ge 0$, are uniformly bounded on $L^p(G)$.
\end{enumerate}
\end{theorem}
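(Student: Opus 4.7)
The plan is to extend the restriction-type method of \cite{Ni22,Ni24,Ni25M}, which treats the non-degenerate Métivier setting, to the degenerate situation permitted by Assumptions \ref{assumptionA} and \ref{assumptionB}. First, by the Calderón--Zygmund theory for spectral multipliers on stratified groups with doubling sub-Riemannian geometry, part (1) of the theorem reduces to proving two ingredients for dyadically localized multipliers $F$ with $\supp F \subseteq [R^2/2, 2R^2]$: a \emph{restriction type estimate} bounding $\norm{F(\sqrt L)}_{L^p\to L^2}$ by $R^{d(1/p-1/2)-1/2}\norm{F}_2$, and a weighted Plancherel estimate controlling the decay of the convolution kernel of $F(L)$ in the Carnot--Carathéodory distance. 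Part (2) then follows from part (1) by the now-standard reformulation of Bochner--Riesz means as an average of multipliers of the required regularity.

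Second, I would analyze $F(L)$ via the group Plancherel formula, organizing the spectral data by the central variable $\mu \in \R^{d_2}\setminus\{0\}$. Writing $\mu = \rho\omega$ with $\omega \in S^{d_2-1}$, the novelty announced in the abstract is a decomposition of $S^{d_2-1}$ into caps whose radius depends on $\rho$ and $R$. Within a cap around a base point $\omega^0$, one approximates $J_\omega$ by $J_{\omega^0}$; the orthogonal decomposition $\R^{d_1} = V_1 \oplus \dots \oplus V_N$ from \cref{assumptionA}, together with the single-eigenvalue condition on each $V_n$, then allows a blockwise Hermite/Laguerre expansion of the restricted operator. Summing the cap contributions and interpolating against the trivial $L^2$ bound yields the required restriction estimate; the exponent $p_{\bar d_1,d_2}$ and the condition \eqref{eq:cond-main} arise as the optimal balance between the cap aperture on $S^{d_2-1}$ and the effective dimension $\bar d_1 = 2(r_1+\dots+r_N)$ of the directions on which $J_\mu$ acts nontrivially.

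The main obstacle is the treatment of $\ker J_\mu$, which may be nontrivial and depends on $\mu$, so that a global conjugation to a standard form is unavailable. Here, \cref{assumptionB} plays the decisive role: it ensures that the bilinear form $(x,x')\mapsto (Jx)^\intercal x'$ vanishes on $\ker J_{\mu^0}\times \ker J_{\mu^0}$, so that on the cap around $\omega^0$ the degenerate directions of $J_{\mu^0}$ decouple from the group law, and sub-Riemannian displacement along them matches Euclidean displacement up to perturbative terms. Converting this geometric fact into sharp weighted kernel bounds, and aligning them with the cap-localized spectral expansion so that the dimension counting produces exactly the threshold $s>d|1/p-1/2|$, is the technical heart of the proof; it is this step that fails without \cref{assumptionB}, and that forces the cap radius to be adapted to the scale of $\ker J_\omega$. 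Once these ingredients are established, the standard singular-integral machinery promotes them to the global $L^p$ estimate in part (1), and hence to the Bochner--Riesz statement in part (2).
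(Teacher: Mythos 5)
Your outline correctly identifies the two structural ingredients (a cap decomposition on $S^{d_2-1}$ adapted to the degeneracy, and the use of \cref{assumptionB} to decouple the kernel directions $\ker J_{\mu^0}$ from the group law), but the argument as proposed has a genuine gap at its foundation. You take as your main ingredient a restriction type estimate of the form $\norm{F(\sqrt L)}_{L^p\to L^2}\lesssim R^{d(1/p-1/2)-1/2}\norm{F}_2$ for spectrally localized $F$. Such an estimate, with the plain $L^2$-norm of $F$ on the right-hand side, is only known for Heisenberg type groups; in the Métivier and a fortiori in the degenerate setting considered here it is not available, and the whole method is built around replacing it. The actual mechanism is the truncated norm $\|F\|_{2^\ell,2}$ along the spectrum of the central operator $U$, obtained by interpolating a $p=1$ Plancherel bound with spectral cluster estimates for the twisted Laplacians $L^\mu$, together with a quantitative gain $\delta^{\frac{d_2-1}{2}(1-\vartheta_p)}$ in the cap size (\cref{thm:restriction-type}). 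Without that truncated-norm/cap-gain structure, the naive Hölder step on the Carnot--Carathéodory ball loses $Q(1/p-1/2)$ rather than $d(1/p-1/2)$, and no sharp threshold can come out. Relatedly, the "technical heart" you defer -- converting \cref{assumptionB} into kernel bounds -- is precisely the content that must be supplied: one needs the first-layer localization $|\bar P^{\mu^j}x|\lesssim R_\ell R^\gamma$ (\cref{prop:rapid-decay-1st-layer}, proved via sub-elliptic Hermite estimates, with cap size $\delta\sim R_\ell/R$ so that $P_0^\mu$ can be replaced by $P_0^{\mu^j}$ using the Lipschitz bound \eqref{eq:proj-radical}) and the second-layer localization $|u|\lesssim R_\ell R^{1+\gamma}$ (\cref{prop:rapid-decay-2nd-layer}, where \cref{assumptionB} kills the bracket term), after which the exponent $p_{\bar d_1,d_2}$ emerges from balancing the number of caps $\sim\delta^{-(d_2-1)}$ against the cap gain and the reduced support volume $R^{r_0}R_\ell^{\bar d_1}(R_\ell R)^{d_2}$, i.e.\ from the inequality \eqref{eq:final-condition}. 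Your proposal names these objects only at the level of heuristics, so the step that actually produces $s>d|1/p-1/2|$ and the value of $p_{\bar d_1,d_2}$ is missing.

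There is also a problem with your reduction scheme. The paper does not run a Calderón--Zygmund argument on spectrally dyadic pieces; it reduces (via \cref{thm:reduction}, i.e.\ finite propagation speed) to multipliers $F^{(\iota)}$ whose \emph{Fourier transform} is supported at scale $2^\iota$, so that the convolution kernel is supported in a CC-ball of radius $\sim 2^\iota$ -- this compact support is what all the localization propositions exploit, and both parts (1) and (2) follow simultaneously from the single dyadic estimate \eqref{eq:reduced}. Your deduction of part (2) from part (1) fails at $p=1$: part (1) is vacuous there, while part (2) is asserted (and is the only content of the theorem) in the limiting case $\bar d_1=d_2-1$, e.g.\ for the group of \cref{ex:N32} with $N=2$. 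So the Bochner--Riesz statement at the endpoint needs the unified dyadic reduction, not an averaging of part (1).
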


As shown in \cite{MaMueGo23}, the order of regularity $s > d \left( 1/p - 1/2 \right)$ is sharp. The threshold $\delta > d \left( 1/p - 1/2 \right) - 1/2$ corresponds to the same order of regularity as in the Bochner--Riesz conjecture, see \cite{Ta99}. It is important to emphasize that \cref{thm:main} should be interpreted as a result for high-dimensional settings, where $d_1$ is much larger than $d_2$. Specifically, \cref{assumptionA} implies that the function $\mu \mapsto \dim \ker J_\mu$ is constant on $\mathbb{R}^{d_2} \setminus \{0\}$. Therefore, if $r_0 = \dim \ker J_\mu$, then $\bar{d}_1 = d_1 - r_0$. Even with both Assumptions \ref{assumptionA} and \ref{assumptionB}, the condition \cref{eq:cond-main} is not necessarily satisfied. A counterexample can be found in the Heisenberg--Reiter groups discussed below, see \cref{ex:HR}.

If \cref{eq:cond-main} holds, then $\smash{p_{\bar{d}_1, d_2} \ge 1}$, and \cref{thm:main} gives results. In fact, note that $\smash{p_{\bar d_1,d_2}}$ is an increasing function in $\bar d_1$. Since $\smash{p_{\bar d_1,d_2}=1}$ in the limiting case $\smash{\bar d_1=d_2-1}$, the first part of \cref{thm:main} provides results whenever $\smash{\bar d_1>d_2-1}$, while the second part provides results for $\smash{\bar d_1\ge d_2-1}$.

Theorem \ref{thm:main} is completely new in the non-Métivier case. However, \cref{thm:main} also recovers some of the results of \cite{Ni25M} for Métivier groups, although the results of \cite{Ni25M} are stronger in this case. Note that \cref{assumptionB} is trivially satisfied for Métivier groups, although not every Métivier group necessarily satisfies \cref{assumptionA}, see \cite[Proposition 25]{MaMue14N}.

\subsection*{Structure of the paper}

In \cref{sec:basics}, we analyze the spectral properties of the matrices $J_\mu$ and explore certain classes of groups where the group structure is degenerate. In \cref{sec:restriction}, we introduce a spectral decomposition into caps on the unit sphere within the center of the Lie group and prove a restriction type estimate for the corresponding operators. In \cref{sec:reduction}, we reduce the proof of \cref{thm:main} to spectral multipliers whose frequencies are supported on dyadic scales.

In \cref{sec:1-layer} and \cref{sec:2-layer}, we analyze the supports of the convolution kernels of the dyadic spectral multipliers from \cref{sec:reduction} using Plancherel type estimates. The size of these supports plays a central role in the proof of \cref{thm:main}, which is completed in \cref{sec:proof}.

\subsection*{Acknowledgments}

I would like to thank Alessio Martini for insightful discussions on the subject of this paper and for his valuable comments on the proof of the main results.

\subsection*{Notation}

We let $\N=\{0,1,2,\dots\}$ and $\R^+=(0,\infty)$. Let $\mathcal S(G)$ denote the space of Schwartz functions on $G\cong\R^d$. The indicator function of a subset $A$ of some measurable space is be denoted by $\mathbf{1}_A$. Given two suitable functions $f_1,f_2$ on $G$, let $f*g$ be the group convolution given by
\[
f*g(x,u) = \int_{G} f(x',u')\,g\big((x',u')^{-1}(x,u)\big) \,d(x',u'),\quad (x,u)\in G,
\]
where $d(x',u')$ denotes the Lebesgue measure on $G$. For a function $f\in L^1(\R^n)$, the Fourier transform $\hat f$ is given by
\[
\hat f(\xi) = \int_{\R^n} f(x) e^{-i\xi x}\, dx,\quad
\xi\in\R^n.
\]
We denote the inverse Fourier transform by $\check f$. We write $A\lesssim B$ if $A\le C B$ for some constant $C$. If $A\lesssim B$ and $B\lesssim A$, we also write $A\sim B$.

\section{Preliminaries}\label{sec:basics}

\subsection{Two-step stratified Lie groups}

Let $G$ be a two-step stratified Lie group. By definition, this means that $G$ is a connected, simply connected nilpotent Lie group whose Lie algebra $\g$ admits a decomposition $\g=\g_1\oplus\g_2$ into two non-trivial subspaces $\g_1,\g_2\subseteq\g$, called layers, such that
\[
[\g_1,\g_1]=\g_2 \quad\text{and}\quad [\g,\g_2]=0.
\]
We refer to $\g_1$ and $\g_2$ as the \textit{first} and \textit{second} layer of $\g$, respectively. Let
\[
d_1=\dim \g_1\ge 1,\quad d_2=\dim \g_2\ge 1\quad \text{and}\quad d=\dim \g.
\]
In exponential coordinates, the group law of $G$ is given by
\[
(x,u)(x',u') = (x+x',u+u'+\tfrac 1 2 [x,x']),\quad x,x'\in\g_1,u,u'\in\g_2.
\]
We recall some prominent classes of two-step stratified Lie groups. Let $\langle \cdot,\cdot\rangle$ be an inner product with respect to which the layers $\g_1$ and $\g_2$ are orthogonal. Let $\g_2^*$ denote the dual of $\g_2$. For any $\mu\in\g_2^*$, let $\omega_\mu:\g_1\times\g_1\to\R$ be the skew-symmetric bilinear form given by
\[
\omega_\mu(x,x') = \mu([x,x']),\quad x,x'\in\g_1.
\]
The group $G$ is called a \textit{Métivier group} if $\omega_\mu$ is non-degenerate for all $\mu\in \g_2^*\setminus\{0\}$, which means that $\mathfrak r_\mu=\{0\}$, where $\mathfrak r_\mu$ is the radical
\[
\mathfrak r_\mu = \{x\in \g_1:\omega_\mu(x,x')=0\text{ for all } x'\in\g_1\}.
\]
The inner product $\langle\cdot,\cdot\rangle$ induces a norm on the dual $\g_2^*$ which we denote by $|\cdot|$. Let $J_\mu$ be the skew-symmetric endomorphism such that
\begin{equation}\label{eq:skew-form-ii}
\omega_\mu(x,x') =\langle J_\mu x,x'\rangle,\quad x,x'\in\g_1.
\end{equation}
Given a basis $X_1,\dots,X_{d_1}$ of the first layer $\g_1$ and a basis $U_1,\dots,U_{d_2}$ of the second layer $\g_2$, the group law in the associated coordinates is given by
\[
(x,u) \cdot (x',u')= \left(x+x',u+u'+\tfrac 1 2 \, (\vec Jx)^\intercal x'\right) ,\quad(x,u), (x',u') \in \R^{d_1} \times\R^{d_2},
\]
where $\smash{(\vec Jx)^\intercal x' = \sum_{k=1}^{d_2} (J_k x)^\intercal x' e_k \in \R^{d_2}}$ with $\{e_1,\dots,e_{d_2}\}$ being the standard basis in $\R^{d_2}$ 
and $J_k$ is the skew-symmetric matrix acting on $\R^{d_1}$ which corresponds to the endomorphism $\smash{J_{U_k^*}}$ under the chosen basis, with $U_k^*\in\g_2^*$ being the dual element to $U_k\in\g_2$. Moreover, if we identify we matrix $J_\mu$ on $\g_1$ with its representing matrix in the coordinates $X_1,\dots,X_{d_1}$ and write $\mu=\mu_1 U_1+\dots+\mu_{d_2} U_{d_2}$, then
\[
J_\mu=\mu_1 J_1 +\dots+\mu_{d_2} J_{d_2}.
\]
Henceforth, we will usually identify $\g_1 \cong \R^{d_1}$ and $\g_2 \cong \R^{d_2}$ by means of the bases $X_1,\dots,X_{d_1}$ and $U_1,\dots,U_{d_2}$.

Note that the group $G$ is a Métivier group if and only if $J_\mu$ is invertible for all $\mu\in\g_2^*\setminus\{0\}$. It is called a \textit{Heisenberg type group} if the endomorphisms $J_\mu$ are additionally orthogonal for all $\mu\in \g_2^*$ of length 1, which is equivalent to
\[
J_\mu^2 = - |\mu|^2 \id_{\g_1}\quad\text{for all } \mu\in \g_2^*.
\]
The class of Heisenberg type groups is a strict subclass of that of Métivier groups, an example of a Métivier group that is not of Heisenberg type can be found for instance in \cite{MueSe04}. Note that the dimension $d_1$ of the second layer of always even if $G$ is a Métivier group. If $d_1$ is even, $d_2=1$ and
\[
J_\mu = \mu \begin{pmatrix}
0 & I_{d_1/2} \\
-I_{d_1/2} & 0 \\
\end{pmatrix} ,\quad\mu\in\R,
\]
then $G$ is a \textit{Heisenberg group}. 

\subsection{\texorpdfstring{Spectral analysis of the matrices $J_\mu$}{Spectral analysis of the matrices Jmu}}

Next, we analyze the spectral properties of the family of matrices $J_\mu$. Recall that any matrix $J_\mu$ is skew-symmetric, so the matrix $iJ_\mu$ on the complexification $\g_1^\C\cong \C^{d_1}$ is self-adjoint, which implies that $-J_\mu^2$ is also self-adjoint.

The matrices $J_\mu$ admit a “simultaneous spectral decomposition” if $\mu$ lies in a Zariski open subset $\g_{2,r}^*$ of $\g_2^*$. This is made precise in the following statement, which can be found, for example, in \cite[Lemma 5]{MaMue14N} and \cite[Proposition~3.4]{Ni25M}.

\begin{proposition}\label{prop:J_mu-spectral}
There exist
\begin{itemize}
    \item a non-empty, homogeneous Zariski-open subset $\g_{2,r}^*$ of $\g_2^*$,
    \item numbers $N\in\N\setminus\{0\}$, $r_0\in\N$, $\mathbf r=(r_1,\dots,r_N)\in\smash{(\N\setminus\{0\})^N}$,
    \item a function $\mu \mapsto \mathbf{b}^\mu = (b^\mu_1,\dots,b^\mu_N)\in [0,\infty)^N$ on $\g_2^*$, and
    \item functions $\mu\mapsto P_n^\mu $ on $\g_{2,r}^*$ with $P_n^\mu:\g_1\to\g_1$, $n\in\{1,\dots,N\}$,
\end{itemize}
such that
\begin{equation}\label{eq:J_mu-spectral}
-J_\mu^2 = \sum_{n=1}^N (b_n^\mu)^2 P_n^\mu \quad \text{for all } \mu \in \g_{2,r}^*.
\end{equation}
Moreover, the functions $\mu \mapsto b^\mu_n$ and $\mu\mapsto P_n^\mu$ satisfy the following two properties:
\begin{enumerate}
\item[\textup{(i)}] the functions $\mu \mapsto b^\mu_n$ are homogeneous of degree~$1$ and continuous on $\g_2^*$, real analytic on $\g_{2,r}^*$, and satisfy $b_n^\mu > 0$ for all $\mu \in \g_{2,r}^*$ and $n \in \{1,\dots,N\}$, and $b_n^\mu \neq b_{n'}^\mu$ if $n \neq n'$ for all $\mu \in \g_{2,r}^*$ and $n,n' \in \{1,\dots,N\}$,
\item[\textup{(ii)}] the functions $\mu\mapsto P_n^\mu$ are (componentwise) real analytic on $\g_{2,r}^*$, homogeneous of degree $0$, and the maps $P_n^\mu$ are orthogonal projections on $\g_1$ of rank $2r_n$ for all $\mu \in \g_{2,r}^*$, with pairwise orthogonal ranges.
\end{enumerate}
\end{proposition}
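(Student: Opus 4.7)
The plan is to apply standard analytic perturbation theory (in the spirit of Kato) to the polynomial family $\mu\mapsto -J_\mu^2$ of symmetric positive semidefinite matrices on $\g_1$, exploiting the linearity of $\mu\mapsto J_\mu$ and the structural constraints imposed by skew-symmetry. Because each $J_\mu$ is real skew-symmetric, the complexified operator $iJ_\mu$ on $\g_1\otimes\C$ is Hermitian and the nonzero eigenvalues of $J_\mu$ occur in conjugate pairs $\pm i b$; hence $-J_\mu^2$ is symmetric positive semidefinite and each of its nonzero eigenvalues has even multiplicity. Listing the distinct nonzero eigenvalues as $\lambda_1(\mu)>\dots>\lambda_{N(\mu)}(\mu)>0$ with multiplicities $2r_1(\mu),\dots,2r_{N(\mu)}(\mu)$, I would define $\g_{2,r}^*$ to be the locus of $\mu\in\g_2^*$ on which $\mathrm{rk}\,J_\mu$ and $N(\mu)$ are simultaneously maximal. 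This set is homogeneous (from $J_{t\mu}=tJ_\mu$) and Zariski-open, since both maxima are encoded by the non-vanishing of explicit polynomials in $\mu$: minors of $J_\mu$ for the rank, and the discriminant of a reduced form of the characteristic polynomial of $-J_\mu^2$ for the count $N$.

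On $\g_{2,r}^*$ the integers $N$ and $(r_1,\dots,r_N)$ are constant, and each $\lambda_n(\mu)$ is a simple root of the corresponding factor of the characteristic polynomial of $-J_\mu^2$, whose coefficients are polynomial in $\mu$. The implicit function theorem then yields real analyticity of $\mu\mapsto\lambda_n(\mu)$, so that $b_n^\mu:=\sqrt{\lambda_n(\mu)}$ is real analytic and positive on $\g_{2,r}^*$, and the ordering guarantees distinctness. From $-J_{t\mu}^2 = t^2(-J_\mu^2)$ one obtains $b_n^{t\mu}=|t|\,b_n^\mu$, proving homogeneity of degree~$1$. To extend continuously to all of $\g_2^*$, I would label the full spectrum of $-J_\mu^2$ in decreasing order via the standard min-max characterization, which produces globally continuous eigenvalue functions, then take square roots and match labels with those already defined on $\g_{2,r}^*$; on the complement $\g_2^*\setminus\g_{2,r}^*$ the resulting $b_n^\mu$ may coincide or vanish, but they remain continuous and homogeneous.

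For the projections I would use the Riesz contour integral
\[
P_n^\mu \;=\; \frac{1}{2\pi i} \oint_{\gamma_n(\mu)} \bigl(z\,I + J_\mu^2\bigr)^{-1}\,dz,
\]
where $\gamma_n(\mu)\subset\C$ is a small positively oriented circle around $\lambda_n(\mu)$ separating it from the remaining spectrum of $-J_\mu^2$. Locally on $\g_{2,r}^*$ the contour may be chosen independent of $\mu$, so analyticity of the resolvent in the matrix entries transfers to componentwise real analyticity of $\mu\mapsto P_n^\mu$. Symmetry of $-J_\mu^2$ makes each $P_n^\mu$ an orthogonal projection, distinctness of the $\lambda_n(\mu)$ gives pairwise orthogonal ranges, the rank equals $2r_n$ by the multiplicity count, and homogeneity of degree~$0$ follows because $-J_\mu^2$ and $-J_{t\mu}^2$ have the same eigenspaces for $t\ne 0$. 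The decomposition \eqref{eq:J_mu-spectral} is then the spectral theorem applied pointwise.

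The principal obstacle is the algebraic bookkeeping around $\g_{2,r}^*$ and the compatibility between the analytic labels on $\g_{2,r}^*$ and the globally continuous min-max labels: one must verify that the two maximality conditions are jointly cut out by the non-vanishing of polynomials in $\mu$ (to obtain Zariski-openness), and that the ordered min-max labeling extends the analytic one continuously across the degeneration locus without swapping branches, so that the generic block sizes $2r_n$ are preserved in the limit. Once this structural picture is fixed, the remaining analytic content is standard resolvent/perturbation theory.
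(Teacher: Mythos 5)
The paper itself offers no proof of this proposition: it is imported from the literature (Lemma~5 of Martini--M\"uller and Proposition~3.4 of the M\'etivier-group paper cited as [Ni25M]), and your perturbation-theoretic sketch follows the same standard route one finds there: skew-symmetry forces even multiplicities of the nonzero eigenvalues of $-J_\mu^2$, Riesz contour integrals give real-analytic orthogonal projections with pairwise orthogonal ranges on a generic Zariski-open set, analyticity of the eigenvalues then follows (most cleanly from $\lambda_n(\mu)=\mathrm{tr}\bigl(-J_\mu^2P_n^\mu\bigr)/(2r_n)$ rather than from your implicit-function-theorem step, since the ``corresponding factor'' of the characteristic polynomial does \emph{not} have polynomial coefficients in $\mu$), and the globally ordered min--max eigenvalues give the continuous, degree-one homogeneous extension of the $b_n$ to all of $\g_2^*$. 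In outline this is correct.

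The genuine gap is exactly the point you flag and then leave open: the constancy of $N$ and, above all, of the block sizes $(2r_1,\dots,2r_N)$ on $\g_{2,r}^*$. Your definition of $\g_{2,r}^*$ (maximal $\mathrm{rank}\,J_\mu$ together with maximal number of distinct nonzero eigenvalues) makes $N$ constant by fiat but does not pin down the multiplicity pattern: continuity only yields \emph{local} constancy, a real Zariski-open set is in general disconnected (the complement of a hyperplane already is), and ``maximal rank $+$ maximal number of distinct eigenvalues'' does not by itself exclude a different multiplicity multiset at special points of that locus or on different Euclidean components; moreover the ``discriminant of a reduced form'' is not literally a polynomial condition, since the squarefree part of $\det(\lambda I+J_\mu^2)$ has only rational coefficients in $\mu$. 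The standard repair, which completes your plan: factor $p_\mu(\lambda)=\det(\lambda I+J_\mu^2)$ over the field $\R(\mu_1,\dots,\mu_{d_2})$ as $p=\lambda^{k_0}\prod_i q_i^{e_i}$ with the $q_i$ distinct and irreducible, clear denominators, and define $\g_{2,r}^*$ by the simultaneous non-vanishing of the cleared denominators, the $\lambda$-discriminants of the $q_i$, the pairwise resultants of the $q_i$, and the values $q_i(\mu,0)$ (equivalently, by the non-vanishing of the finitely many subresultants of $p_\mu$ and its derivatives encoding the generic multiplicity pattern). This set is non-empty, homogeneous and Zariski-open, and on it every specialization $q_i(\mu,\cdot)$ is squarefree, the specializations are pairwise coprime and do not vanish at $0$, so each nonzero eigenvalue has multiplicity exactly $e_i$; hence the multiset of multiplicities is the \emph{same at every point}, independently of connectedness, and your Riesz-projection argument then yields constant ranks $2r_n$. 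Note finally that the proposition does not demand the decreasing-order labelling, only some labelling with $b_n^\mu\neq b_{n'}^\mu$ on $\g_{2,r}^*$, so once the multiplicity multiset is known to be constant you are free to assign indices blockwise; with that, the continuity and homogeneity statements for $\mathbf b^\mu$ on all of $\g_2^*$ follow as you describe.
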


A few comments about \cref{prop:J_mu-spectral} are in order. In the special case where $G$ is a Métivier group, we have $r_0=0$ since $\ker J_\mu=0$ for $\mu\neq 0$. If $G$ is even a Heisenberg type group, we may choose $\g_{2,r}^*=\g_2^*\setminus\{0\}$, $N=1$, $\mathbf{b}^\mu=|\mu|$, and the only projection is identity map on $\g_1$. However, for a general two-step stratified Lie group, it may happen in the above decomposition that $b_n^\mu = 0$ or $b_n^\mu = b_{n'}^\mu$ for $n\neq n'$ if $\mu$ is in the Zariski closed set $\g_2^*\setminus\g_{2,r}^*$. This structure is generally difficult to handle, since the functions $\mu \mapsto b^\mu_n$ and $\mu\mapsto P_n^\mu$ may have singularities on the set $\g_2^*\setminus\g_{2,r}^*$. For more details on these singularities, we refer the reader to \cite{MaMue14N}, and also \cite{MaMue16}.

\subsection{Particular classes of non-Métiver groups}

We shed more light on Assumptions \ref{assumptionA} and \ref{assumptionB}, which define the class of Lie groups considered in this paper.

In \cref{assumptionA}, we require a decomposition $\g_1 = V_1 \oplus \dots \oplus V_N$ such that the matrices $J_\mu$ act independently on each subspace $V_n$. Moreover, $J_\mu^2$ must admit a unique non-zero eigenvalue. Let $P_n$ denote the orthogonal projection onto $V_n$. Under \cref{assumptionA}, the spectral decomposition of the matrix $J_\mu^2 P_n$, when restricted to $V_n$, consists of a single projection, which is the projection onto the unique eigenspace of $J_\mu^2 P_n$. Since we assume that the rank of $J_\mu^2 P_n$ remains constant, the rank of this projection also remains constant as $\mu$ varies over $\g_2^* \setminus \{0\}$.

Comparing this setup with the decomposition in \cref{prop:J_mu-spectral}, satisfying \cref{assumptionA} means that the decomposition \cref{eq:J_mu-spectral} extends actually to $\g_{2,r}^* = \g_2^* \setminus \{0\}$, that is,
\begin{equation}\label{eq:J_mu-spectral-1}
-J_\mu^2 = \sum_{n=1}^N (b_n^\mu)^2 P_n^\mu \quad \text{for all } \mu\in\g_2^* \setminus \{0\},
\end{equation}
where $b_n^\mu > 0$ for all $\mu \in \g_2^* \setminus \{0\}$. However, in \cref{eq:J_mu-spectral-1}, the projections $P_n^\mu$ are not necessarily the projections onto the full eigenspace associated with $b_n^\mu$, since it may happen that $b_n^\mu = b_{n'}^\mu$ for $n\neq n'$. Even though we require that there is a unique eigenvalue on each block $V_n$, the eigenvalues for some blocks may coincide. 

Assumption \ref{assumptionA} is the same as Assumption (A) in \cite{Ma15}. This assumption plays a crucial role in \cite{Ma15}, where a Mikhlin--Hörmander type result with sharp order of regularity $s>d/2$ is proved. The key to the proof of the main result of \cite{Ma15} is a weighted Plancherel estimate with a weight that depends only on the second layer~$\g_2$. Specifically, let $L = -(X_1^2 + \dots + X_{d_1}^2)$ denote the sub-Laplacian associated with the vector fields $X_1, \dots, X_{d_1}$, and let  
\[
U = \left(-(U_1^2 + \dots + U_{d_2}^2)\right)^{1/2} = \left(-\Delta_{\mathfrak{g}_2}\right)^{1/2}
\]  
be the square root of the negative partial Laplacian on the second layer $\mathfrak{g}_2$. Then, for all functions $F, \chi \in C_c^\infty(\mathbb{R}^+)$, the (group) convolution kernel $\mathcal{K}_\ell$ associated with the operator $F(L)\chi(2^\ell U)$ satisfies  
\begin{equation}\label{eq:Plancherel-2nd-layer-expl}
\int_G \left| |u|^\alpha \, \mathcal{K}_{\ell}(x,u) \right|^2 \, d(x,u) \lesssim 2^{\ell(2\alpha-d_2)} \|F\|_{L_2^\alpha}^2
\end{equation}
for all $\alpha \geq 0$ and $\ell \in \mathbb{Z}$. Here, the operator $F(L)\chi(2^\ell U)$ should be thought of as a dyadic piece of a decomposition of the operator $F(L)$.

By a partial Fourier transform, the weight $|u|^\alpha$ corresponds to taking derivatives of $b^\mu_n$ and $P_n^\mu$ with respect to the variable $\mu$. In fact, proving an estimate such as \cref{eq:Plancherel-2nd-layer-expl} beyond two-step stratified Lie groups where \cref{assumptionA} is violated and where the functions $\mu \mapsto b^\mu_n$ and $\mu\mapsto P_n^\mu$ may admit singularities is a highly non-trivial problem. Versions of such weighted Plancherel estimates are available under the assumption that one has good control over the derivatives of both $\mu \mapsto b^\mu_n$ and $\mu\mapsto P_n^\mu$, but beyond that not much is known about the validity of such estimates, see \cite{MaMue14N} for more details.

To prove the statement of \cref{thm:main}, \cref{assumptionA} alone does not appear sufficient, and we need to additionally require \cref{assumptionB}, which is satisfied by several notable classes of non-Métivier groups. In \cref{assumptionB}, we require that $\langle J_\mu x, x' \rangle = 0$ for all $\mu \in \mathfrak{g}_2^*$ and all $x, x' \in \mathfrak{g}_1$ that lie in the kernel of $J_{\mu^0}$ for some fixed $\mu^0 \in \mathfrak{g}_2^*$. This condition is automatically satisfied of the kernel of the matrix $J_\mu$ is one-dimensional on each subspace $V_n$. While many of the arguments in this paper remain valid without \cref{assumptionB}, this assumption seems to be vital for a crucial step in the proof.

\subsection{Some examples} Finally, we give two examples of classes of two-step stratified Lie groups for which the results of \cref{thm:main} hold.

\begin{example}[Copies of $N_{3,2}$ glued along their centers]\label{ex:N32}
The free $2$-step nilpotent Lie group $N_{3,2}$ on $3$ generators is the simply connected, connected nilpotent Lie group defined by the relations
\[[X_1,X_2] = Y_3, \quad [X_2,X_3] = Y_1, \quad [X_3,X_1] = Y_2,\]
where $X_j,Y_j$, $j\in\{1,2,3\}$ is a basis of its Lie algebra. Then, in exponential coordinates, $N_{3,2}$ can be identified with $\R^3_x \times \R^3_y$, where the group law is given by
\[(x,y) \cdot (x',y') = (x+x',y+y' + x\wedge x'/2)\]
and $x \wedge x'$ denotes the cross product of $x,x' \in \R^3$. This particular group was previously considered in \cite{MaMue13}. Now, for any $n\in\{1,\dots,N\}$, let $G_n=N_{3,2}$. Let $G$ be the the quotient of $G_1\times \dots\times G_2$ given by identifying the centers of the groups $G_n$ via the identity map, that is, $G$ is given by the relations 
\[[X_{n,1},X_{n,2}] = Y_3, \quad [X_{n,2},X_{n,3}] = Y_1, \quad [X_{n,3},X_{n,1}] = Y_2,\]
where $n\in\{1,\dots,N\}$ and $X_{n,j},Y_j$, $n\in\{1,\dots,N\}$, $j\in\{1,2,3\}$ is a basis of the Lie algebra of $G$. In particular, $G$ is a Lie group of topological dimension $3(N+1)$ whose center has dimension $d_2=3$.

We chose the usual Euclidean inner product on $G=\R^{3N}\times \R^3$. If $\smash{J_\mu^G}$ is defined as in \cref{eq:skew-form-ii}, then
\[
J_\mu^{G} = \begin{pmatrix}
J_\mu^{N_{3,2}} & & \\
 & \ddots & \\
 &  & J_\mu^{N_{3,2}}\\
\end{pmatrix},\quad\mu\in \R^3,
\]
where
\[
J_\mu^{N_{3,2}}=
\begin{pmatrix}
0 & -\mu_3 & \mu_2 \\
\mu_3 & 0 & -\mu_1 \\
-\mu_2 & \mu_1 & 0
\end{pmatrix},\quad\mu=(\mu_1,\mu_2,\mu_3).
\]
If we split $\R^{3N}=\R^3\times\dots\times\R^3$ and choose $P_n:\R^{3N}\to\R^{3N}$ as the canonical projection onto the $n$-th component, then $J_\mu P_n = P_n J_\mu$ for all $\mu\in\R^3$. Moreover, note that
\[
\left(J_\mu^{N_{3,2}}\right)^2 = 
\begin{pmatrix}
-\mu_2^2 - \mu_3^2 & \mu_1 \mu_2 & \mu_1 \mu_3\\
\mu_1 \mu_2 & -\mu_1^2 - \mu_3^2 & \mu_2 \mu_3\\
\mu_1 \mu_3 & \mu_2 \mu_3 & -\mu_1^2 - \mu_2^2
\end{pmatrix}.
\]
For $\mu \in \g_2^*\setminus\{0\}$, this matrix has a unique nonzero eigenvalue, which is $-|\mu|^2$ and which is associated with the eigenvectors $(-\mu_3,0,\mu_1)$ and $(-\mu_2,\mu_1,0)$, while its kernel is given by the subspace spanned by $\mu$. Thus, the constructed group satisfies both Assumptions \ref{assumptionA} and \ref{assumptionB}.

Finally, we check the dimensional conditions of \cref{thm:main}. If we let $2r_n$ be the rank of $J_\mu^2 P_n$, then $2\left(r_1+\dots+r_N\right)=2N\ge 2 = d_2-1$. Moreover,
\begin{align*}
p_{\bar d_1,d_2}
& = \frac{\bar d_1(d_2-1) +2d_2(d_2+1)}{\bar d_1(d_2-1) + 3d_2^2+1}\bigg|_{(\bar d_1,d_2)=(2N,3)} \\
& = 2 - \frac{8}{N+7}.
\end{align*}
In particular, the first part of \cref{thm:main} provides results only if $N\ge 2$, while the second part provides results for all $N\in\N\setminus\{0\}$ (but only for $p = 1$ when $N = 2$). 
\end{example}

\begin{example}[Heisenberg--Reiter groups]\label{ex:HR}
Given some integers $N>0$ and $d_2>0$, the Heisenberg--Reiter group $\mathbb{H}_{N,d_2}$ is defined by endowing $\R^{d_2 \times N} \times \R^N \times \R^{d_2}$ with the group law
\[
(x,y,u) \cdot (x',y',u') = (x+x',y+y',u+u'+(xy'-x'y)/2),
\]
where $x,x'\in \R^{d_2 \times N}$, $y,y'\in \R^N$, and $u,u'\in \R^{d_2}$.
Note that $\R^{d_2 \times N}$ is the set of the real $d_2 \times N$ matrices, and the products $xy',x'y$ above shall be interpreted in the sense of matrix multiplication. $\mathbb{H}_{N,d_2}$ is a two-step stratified Lie group of homogeneous dimension $Q = Nd_2 + N + 2d_2$ and topological dimension $d = Nd_2 + N + d_2$. This group was previously considered in \cite{Ma15}.

We equip $\mathbb{H}_{N,d_2}$ with the usual Euclidean inner product. If $\smash{J_\mu^{\mathbb{H}_{N,d_2}}}$ is defined as in \cref{eq:skew-form-ii}, then
\[
\langle J_\mu^{\mathbb{H}_{N,d_2}} (x,y),(x',y')\rangle
 = \langle \mu , x y' - x'y \rangle.
\]
Thus, for $\mu\neq 0$, the kernel of $\smash{J_\mu^{\mathbb{H}_{N,d_2}}}$ is given by $\mu^\perp \times \{0\}$, that is, $\smash{J_\mu^{\mathbb{H}_{N,d_2}}}(x,y)=0$ if and only if $\mu^\intercal x=0$ and $y=0$. So the dimension of 
the kernel is $(d_2-1)N$. In particular, $\mathbb{H}_{N,d_2}$ does not fall into the class of Métivier groups unless $d_2 = 1$, in which case it is the $(2N+1)$-dimensional Heisenberg group $\mathbb{H}_N$.

An orthogonal decomposition of the first layer is given by the isomorphism
\begin{equation}\label{eq:Heisenberg-Reiter-proj}
\R^{d_2 \times N} \times \R^N\cong (\R^{d_2} \times \R)^N.
\end{equation}
In other words, the Heisenberg--Reiter group $\mathbb{H}_{N,d_2}$ is isomorphic to $N$ copies of $\mathbb{H}_{1,d_2}$ with are glued along their centers. Thus, to verify that $\mathbb{H}_{N,d_2}$ satisfies Assumptions \ref{assumptionA} and \ref{assumptionB}, it suffices to verify this for each block $\mathbb{H}_{1,d_2}$.

For $(x,y),(x',y')\in  \R^{d_2}\times \R$, we have
\begin{align*}
\langle \mu , x y' - x'y \rangle 
& = x^\intercal \mu\,  y' - y\, \mu^\intercal x' \\
& = (x^\intercal \ y )
 \begin{pmatrix}
     0 & \mu \\
     -\mu^\intercal & 0 
 \end{pmatrix}
 \begin{pmatrix}
 x'\\y'   
\end{pmatrix}.
\end{align*}
Thus,
\[
J_\mu = J_\mu^{\mathbb{H}_{1,2}} = \begin{pmatrix}
     0 & \mu \\
     -\mu^\intercal & 0 
 \end{pmatrix}
 \quad\text{and}\quad
 J_\mu^2
 = 
 \begin{pmatrix}
 -\mu\,\mu^\intercal & 0 \\
0  & -|\mu|^2
\end{pmatrix}.
\]
Since $J_\mu$ is a skew-symmetric matrix of size $d_2 + 1$ and since the kernel of $J_\mu$ has dimension $d_2 - 1$, $\smash{J_\mu^2}$ must have a unique non-zero eigenvalue, whence $\smash{\mathbb{H}_{1,d_2}}$ satisfies \cref{assumptionA}. More explicitly, since the kernel of $J_\mu$ is given by $\smash{\mu^\perp\times \{0\}}$, if an element $(x,y)\in \R^{d_2}\times\R$ lies in the eigenspace of the non-zero eigenvalue, then $x=\lambda \mu$ for some $\lambda\in\R$. This implies that the non-zero eigenvalue is $-|\mu|^2$, which vanishes if and only if $\mu = 0$.

Moreover, if $(x,y),(x',y')\in \ker J_{\mu^0}$ for some $\mu^0\in\R^{d_2}\setminus\{0\}$, then $y=y'=0$ and thus $\langle \mu , x y' - x'y \rangle=0$ for all $\mu\in\R^{d_2}$, so \cref{assumptionB} is satisfied as well.

We check the remaining conditions of \cref{thm:main}. Let $P_n$ be the orthogonal projection associated with the decomposition \cref{eq:Heisenberg-Reiter-proj}. Then
\[
2r_n := \mathrm{rank}\big((J_\mu^{\mathbb{H}_{N,d_2}})^2 P_n\big) = \mathrm{rank}\big((J_\mu^{\mathbb{H}_{1,d_2}})^2\big)=1.
\]
Hence $2\left(r_1+\dots+r_N\right)=2N$. Thus, \cref{thm:main} only gives results for those Heisenberg--Reiter groups $\smash{\mathbb{H}_{N,d_2}}$ that satisfy $2N\ge d_2-1$.

\end{example}

\subsection{Projections onto the kernel} Let $P_0^\mu$ denote the orthogonal projection onto the kernel of $J_\mu$. Then, in view of \cref{eq:J_mu-spectral-1}, we have
\[
P_0^\mu = \id_{\g_1} - \left(P_1^\mu+\dots+P_N^\mu\right)\quad\text{for all } \mu\in\g_2^*\setminus\{0\},
\]
and $\mu\mapsto P_0^\mu$ is smooth on $\g_2^*\setminus\{0\}$. Thus, since $\mu\mapsto P_n^\mu$ is homogeneous, we have
\begin{equation}\label{eq:proj-radical}
\smash{\|P_0^\mu - P_0^{\mu'}\| \lesssim |\mu-\mu'| \quad\text{for all } \mu,\mu' \in \g_2^*\setminus\{0\}.}
\end{equation}
This estimate will play a role later in the proof of \cref{thm:main}. However, the validity of \cref{eq:proj-radical} does not depend on \cref{assumptionA} or \cref{assumptionB}. Instead, it is sufficient to require that the dimension of $\ker J_\mu$ remains constant as a function of $\mu$ on $\g_2^* \setminus \{0\}$, as will be shown in the next lemma.

\begin{lemma}\label{lem:proj-radical}
If $\mu\mapsto\dim \ker J_\mu$ is constant on $\g_2^*\setminus\{0\}$, then:
\begin{enumerate}
\item There is some $F\in C_c^\infty(\R)$ such that $F(iJ_\mu)|_{\g_1}=P_0^\mu$ for all $\g_2^*\setminus\{0\}$. 
\item The map $\mu\mapsto P_0^\mu$ is smooth and Lipschitz continuous on $\g_2^*\setminus\{0\}$, and
\[
\smash{\|P_0^\mu - P_0^{\mu'}\| \lesssim \|J_\mu-J_{\mu'}\| \quad\text{for all } \mu,\mu' \in \g_2^*\setminus\{0\}.}
\]
\end{enumerate}
\end{lemma}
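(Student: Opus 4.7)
The plan is to leverage the constancy of $\dim\ker J_\mu$ to obtain a uniform spectral gap of $iJ_\mu$ at $0$, use the spectral theorem to represent $P_0^\mu$ as $F(iJ_\mu)$ for a suitable bump function $F$, and then derive smoothness and the Lipschitz estimate via the Cauchy integral representation together with the resolvent identity.

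For part~(1), the first step is to observe that $iJ_\mu$ is self-adjoint on $\g_1^\C$ with real spectrum consisting of $0$ (with multiplicity $r_0=\dim\ker J_\mu$) and non-zero real eigenvalues $\pm b_n^\mu$. Since $r_0$ is constant on $\g_2^*\setminus\{0\}$ and the characteristic polynomial of $iJ_\mu$ depends polynomially on $\mu$, its factorization $\lambda^{r_0}q_\mu(\lambda)$ has $q_\mu(0)\neq 0$ uniformly; continuity of the roots and compactness of the unit sphere in $\g_2^*$ then yield a constant $c>0$ such that $|\lambda|\ge c$ for every non-zero eigenvalue of $iJ_\mu$ with $|\mu|=1$. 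By the homogeneity $J_{t\mu}=tJ_\mu$, this extends to $|\lambda|\ge c|\mu|$ for all $\mu\neq 0$. Choosing $F\in C_c^\infty(\R)$ with $F\equiv 1$ near $0$ and $\supp F\subset(-c,c)$, the spectral theorem gives $F(iJ_\mu)|_{\g_1}=P_0^\mu$ in the appropriate range, and the general case is reduced to this by the homogeneity $P_0^{t\mu}=P_0^\mu$ (coming from $\ker J_{t\mu}=\ker J_\mu$).

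For part~(2), I plan to use the Cauchy integral representation
\[
P_0^\mu = \frac{1}{2\pi i}\oint_\gamma (z-iJ_\mu)^{-1}\,dz,
\]
where $\gamma$ is a small circular contour in $\C$ around $0$ that separates $0$ from the non-zero eigenvalues of $iJ_\mu$. Smoothness of $\mu\mapsto P_0^\mu$ follows by differentiating under the integral sign, using that $\mu\mapsto J_\mu$ is linear (hence smooth) and that the resolvent depends smoothly on $\mu$ away from the spectrum. The Lipschitz bound is extracted from the resolvent identity
\[
(z-iJ_\mu)^{-1}-(z-iJ_{\mu'})^{-1} = (z-iJ_\mu)^{-1}\,i(J_\mu-J_{\mu'})\,(z-iJ_{\mu'})^{-1},
\]
which, after integrating against $F(z)$ over $\gamma$ and bounding the two resolvents uniformly on $\gamma$ using the spectral gap, yields $\|P_0^\mu-P_0^{\mu'}\|\lesssim \|J_\mu-J_{\mu'}\|$.

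The main obstacle I anticipate is setting up the uniform spectral gap correctly: without the constancy of $\dim\ker J_\mu$ the non-zero eigenvalues could accumulate at $0$, the contour $\gamma$ would collapse, and no common $F\in C_c^\infty(\R)$ could exist. Once the gap is in place, the remaining work amounts to a standard application of analytic perturbation theory for self-adjoint operators with an isolated eigenvalue of constant multiplicity; the implicit constant in the Lipschitz estimate is then controlled by the length of $\gamma$ together with the uniform resolvent bound on $\gamma$.
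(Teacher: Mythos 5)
Your proposal is correct, but the main analytic step runs along a genuinely different route from the paper's. The setup is essentially the same in both: constancy of $\dim\ker J_\mu$, homogeneity $J_{t\mu}=tJ_\mu$, and compactness of the unit sphere give a gap $c>0$ so that every non-zero eigenvalue of $iJ_\mu$ has modulus at least $c|\mu|$, and then $P_0^\mu=F(iJ_\mu)|_{\g_1}$ for a bump function $F$ supported in the gap (with the same implicit caveat in both arguments: a single compactly supported $F$ only does the job when $|\mu|$ is bounded below, e.g.\ $|\mu|=1$, the general case being recovered from the $0$-homogeneity of $P_0^\mu$; accordingly the Lipschitz bound is really an estimate for $|\mu|,|\mu'|\sim 1$, which is exactly how it is used later via \cref{eq:proj-radical-ii}). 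The divergence is in how smoothness and the Lipschitz estimate are extracted: you use the Riesz projection $P_0^\mu=\frac{1}{2\pi i}\oint_\gamma(z-iJ_\mu)^{-1}\,dz$ together with the resolvent identity and uniform resolvent bounds on $\gamma$ supplied by the spectral gap — classical Kato-type analytic perturbation theory — whereas the paper writes $P_0^\mu=\frac{1}{2\pi}\int_\R\hat F(\tau)\,e^{-\tau J_\mu}\,d\tau$ by Fourier inversion and combines $\|e^{-\tau J_\mu}-e^{-\tau J_{\mu'}}\|\le|\tau|\,\|J_\mu-J_{\mu'}\|$ (mean value theorem plus unitarity of $e^{-\tau J_\mu}$) with the Schwartz decay of $\hat F$. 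Both are sound and of comparable length; your version makes the constant's dependence on the gap (contour radius, resolvent bound) explicit, while the paper's avoids contour integration and needs nothing beyond unitarity of the one-parameter group. One cosmetic slip: in the contour representation there is no $F(z)$ to integrate against — either drop $F$ there or phrase it as the holomorphic functional calculus applied to a function equal to $1$ on the disc enclosed by $\gamma$.
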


\begin{proof}
As in \cref{prop:J_mu-spectral}, let $b_1^\mu,\dots,b_N^\mu$ be the positive eigenvalues of $iJ_\mu$. Then $b_n^\mu>0$ for all $\mu$ lying in the Zariski open subset $\g_{2,r}^*\subseteq \g_2^*$. Since $\mu\mapsto\dim \ker J_\mu$ is constant on $\g_2^*\setminus\{0\}$, we have even $b_n^\mu>0$ for all $\mu\in \g_2^*$. Thus, there are constants $C,c>0$ such that
\[
c < b_n^\mu < C \quad\text{for } |\mu|=1 \text{ and } n\in\{1,\dots,N\}.
\]
Note that $iJ_\mu$ is a self-adjoint endomorphism on $\g_1^\C=\C^{d_1}$. Let $F:\R\to \R$ be a smooth function with $\supp F\subseteq [-c,c]$ and $F(0)=1$. Then, via functional calculus, we have $F(iJ_\mu)|_{\g_1}=P_0^\mu$ for all $\mu\in\g_2^*\setminus\{0\}$, and the Fourier inversion formula yields
\[
P_0^\mu = \frac{1}{2\pi} \int_\R \hat F(\tau) e^{-\tau J_\mu} \,d\tau.
\]
Hence, $\mu\mapsto P_0^\mu$ is smooth on $\g_1$. Moreover,
\[
\|P_0^\mu - P_0^{\mu'}  \|
 \le \int_\R |\hat F(\tau)| \, \| e^{-\tau J_\mu}-e^{-\tau J_{\mu'}} \|\,d\tau.
\]
By the spectral theorem, we have $\smash{\|e^{iA}\|\lesssim \sup_{\lambda\in\R}|e^{i \lambda}| = 1}$ for all symmetric matrices $A$ on $\smash{\g_1^\C}$. Thus, by the mean value theorem,
\[
\| e^{-\tau J_\mu}-e^{-\tau J_{\mu'}} \|
\le \tau\left\| J_\mu- J_{\mu'} \right\|.
\]
Since $\hat F$ is a Schwartz function, we obtain $\|P_0^\mu - P_0^{\mu'}\| \lesssim \|J_\mu-J_{\mu'}\|$.
\end{proof}

\section{Truncated restriction type estimates on caps}\label{sec:restriction}

The proof of \cref{thm:main} relies on exploiting suitable restriction type estimates. Similar estimates are also used in \cite{Ni25M}. However, a novelty of the restriction type estimates used here is the additional spectral decomposition into caps on the unit sphere within the center of the Lie group. This additional decomposition is crucial for deriving spectral multiplier results for groups with degenerate group structures. The restriction type estimates stated here hold actually for general two-step stratified Lie group.

Let $G$ be a general two-step stratified Lie group. Let again $\g=\g_1\oplus\g_2$ be a stratification of its Lie algebra which is orthogonal with respect to a chosen inner product $\langle\cdot,\cdot\rangle$. As before, given an orthonormal basis $X_1,\dots,X_{d_1}$ of the first layer $\g_1$, we identify these by left-invariant vector fields via the Lie derivative, and consider the associated sub-Laplacian $L$, which is the second-order differential operator
\[
L = -(X_1^2+\dots+X_{d_1}^2).
\]
Let $U_1,\dots,U_{d_2}$ be an orthonormal basis of $\g_2$. The operators $L,-iU_1,\dots,-iU_{d_2}$ admit a joint functional calculus \cite{Ma11}, which means that we can define the operator $F(L,\mathbf{U})$ on $L^2(G)$ for any Borel function $F:\R \times \R^{d_2}\to\C$, where $\mathbf U$ is the vector of differential operators
$\mathbf U = (-iU_1,\dots,-iU_{d_2})$.

\subsection{A decomposition into caps}\label{sec:restriction-caps}

Using the joint functional calculus, we introduce an additional truncation along the spectrum of the operator
\[
U=(-(U_1^2+\dots+U_{d_2}^2))^{1/2},
\]
which corresponds to a partial Laplacian on the second layer $\g_2$. This allows us to write any operator $F(L)$ as
\[
F(L) = \sum_{\ell \in\Z} F(L) \chi(2^\ell U),
\]
where $\chi\in C_c^\infty(\R^+)$ and $(\chi(2^{\ell} \cdot))_{\ell\in\Z}$ forms a partition of unity. In fact, as we will see in \cref{prop:p=1} below, there exists a number $\ell_0 \in \mathbb{N}$ such that
\[
F(L)\chi(2^\ell U) = 0 \quad\text{for all } \ell < -\ell_0.
\]

The additional spectral decomposition into caps on the unit sphere $S^{d_2-1}\subseteq \g_2^*$ is as follows: Given $\delta>0$, let $(\zeta_j)_{j\in I}$ be a a smooth partition of unity subordinate to some open cover $(V_j)_{j\in I}$ of caps of size $\delta>0$ of the unit sphere $S^{d_2-1}$. Any function $\zeta_j:S^{d_2-1}\to\R$ extends to a smooth function $\zeta_j:\g_2^*\setminus\{0\}\to\R$ that is homogeneous of degree 0. This allows us to decompose the operator $F(L)$ as
\[
F(L) = \sum_{\ell \in\Z} \sum_{j\in I} F(L) \chi(2^\ell U) \zeta_j(\mathbf U).
\]
In the proof of \cref{thm:main}, the size of the caps will depend on $\ell$.

\subsection{Restriction type estimates}

The restriction type estimates of \cref{thm:restriction-type} below provide bounds for the operators $F(L) \chi(2^\ell U) \zeta_j(\mathbf U)$. These estimates play a crucial role in the proof of \cref{thm:main}.

For $M\in\R^+$, let $\|\cdot\|_{M,2}$ be the norm given by
\[
\|F\|_{M,2} = \bigg(\frac{1}{M} \sum_{K\in\Z} \,\sup _{\lambda \in [\frac{K-1}{M}, \frac{K}{M})}|F(\lambda)|^2\bigg)^{1 / 2}.
\]
For $n \in \mathbb{N} \setminus \{0\}$, let $\ST{n}$ denote the Stein--Tomas exponent on $\mathbb{R}^n$, which is  
\[
\ST{n} = \frac{2(n+1)}{n+3}.
\]  
Additionally, we set $\ST{d_1, d_2} = \min\{\ST{d_1}, \ST{d_2}\}$.

\begin{theorem}\label{thm:restriction-type}
Let $1\le p\le \ST{d_1,d_2}$. Let $A\subseteq \R^+$ be a compact subset, and let $\delta>0$ and $(\zeta_j)_{j\in I}$ be defined as above. If $F:\R\to\C$ is a bounded Borel function supported in $A$ and $\chi\in C_c^\infty(\R^+)$, then
\begin{equation}\label{eq:restriction-type}
\norm{ F(L)\chi(2^\ell U) \zeta_j(\mathbf U) }_{p\to 2}
\le C_{A,p,\chi} \, \delta^{\frac{d_2-1} 2 (1-\vartheta_p)} \,2^{-\ell d_2(\frac 1 p - \frac 1 2)}  \, \|F\|_2^{1-\theta_p} \|F\|_{2^{\ell},2}^{\theta_p}
\end{equation}
for all $\ell\in\Z$, where $\vartheta_p,\theta_p\in [0,1]$ satisfy
\[
1/p = (1-\vartheta_p) + \vartheta_p/\ST{d_2}\quad\text{and}\quad 1/p = (1-\theta_p) + \theta_p/\ST{d_1,d_2}.
\]
\end{theorem}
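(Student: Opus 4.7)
The plan is to establish the estimate at the two endpoints $p = 1$ and $p = \ST{d_1, d_2}$, at which $(\vartheta_p, \theta_p)$ take the values $(0,0)$ and $(\vartheta_{\ST{d_1,d_2}}, 1)$ respectively, and then obtain the general bound by complex interpolation. The exponents $\vartheta_p$ and $\theta_p$ will come out naturally from the linear interpolation of the $L^p$-scale against the two Stein--Tomas exponents $\ST{d_2}$ and $\ST{d_1,d_2}$, with $\vartheta_p$ tracking the cap-size factor and $\theta_p$ tracking the norm on $F$.

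For the endpoint $p = 1$, Young's inequality reduces the estimate to the bound $\|F(L)\chi(2^\ell U)\zeta_j(\mathbf U)\|_{1\to 2} \le \|\mathcal{K}_{\ell,j}\|_{L^2(G)}$ for the associated group convolution kernel $\mathcal{K}_{\ell,j}$. Passing to the partial Fourier transform in the central variable and invoking the group Plancherel formula yields
\[
\|\mathcal{K}_{\ell,j}\|_{L^2(G)}^2 \sim \int_{\g_2^*} |\chi(2^\ell |\mu|)|^2\,|\zeta_j(\mu)|^2\, \|F(L^\mu)\|_{\mathrm{HS}}^2 \, d\nu(\mu),
\]
where $L^\mu$ is the $\mu$-fiber twisted sub-Laplacian and $d\nu$ the associated Plancherel measure. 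The $\chi$- and $\zeta_j$-cutoffs confine $\mu$ to an annulus of width $\sim 2^{-\ell}$ intersected with a spherical cap of aperture $\delta$, a region of Lebesgue measure $\lesssim \delta^{d_2-1} 2^{-\ell d_2}$. Combined with the standard fiberwise Plancherel estimate for the Hermite-type spectral expansion of $L^\mu$ (which, since $F$ is supported in a compact $A$, matches $\|F(L^\mu)\|_{\mathrm{HS}}^2\,d\nu(\mu)$ against $\|F\|_2^2$ times Lebesgue measure on the support region), this yields the desired endpoint bound $\delta^{(d_2-1)/2}\,2^{-\ell d_2/2}\,\|F\|_2$.

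For the endpoint $p = \ST{d_1, d_2}$, the plan is to combine two Stein--Tomas type restriction inequalities through the joint functional calculus of $(L, \mathbf U)$. A horizontal restriction estimate for $L$, applied fiberwise to $L^\mu$ along the lines of \cite{Ni25M}, produces the norm $\|F\|_{2^\ell,2}$ at exponent $\ST{d_1}$, with the dyadic scale $2^\ell$ arising from the localization by $\chi(2^\ell U)$. A central restriction estimate --- the classical Stein--Tomas inequality on $S^{d_2-1}\subseteq \g_2^*$, refined to caps of aperture $\delta$ --- produces the cap factor $\delta^{(d_2-1)(1-\vartheta_p)/2}$ at exponent $\ST{d_2}$. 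These estimates will be interleaved by freezing $\mu$, applying the horizontal restriction on $\g_1 \cong \R^{d_1}$, and then integrating over $\mu$ using the cap-refined spherical restriction on $\g_2^*\cong \R^{d_2}$. The main obstacle will be carrying out this interleaving so that the cap localization and the fiber-side restriction bound combine without logarithmic losses in $\delta$ or $2^\ell$, and so that the final scaling prefactor is exactly $2^{-\ell d_2(1/p - 1/2)}$. Once this endpoint is proven, complex interpolation against the $p = 1$ estimate delivers the full inequality \eqref{eq:restriction-type} for every $p \in [1, \ST{d_1, d_2}]$.
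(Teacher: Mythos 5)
Your proposal follows essentially the same route as the paper: the $p=1$ endpoint via the $L^2$ norm of the convolution kernel computed through the (Laguerre-type) Plancherel formula with the cap and dyadic localizations, the $p=\ST{d_1,d_2}$ endpoint by combining a fiberwise restriction estimate for the twisted Laplacians $L^\mu$ with a cap-refined Stein--Tomas estimate on $S^{d_2-1}\subseteq\g_2^*$, and interpolation of the fixed operator against the $p=1$ bound to produce the mixed factor $\|F\|_2^{1-\theta_p}\|F\|_{2^{\ell},2}^{\theta_p}$ (the exponents indeed match, since $\theta_p\,\vartheta_{\ST{d_1,d_2}}=\vartheta_p$). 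The ``interleaving'' you flag as the main obstacle is carried out in the paper exactly as you envision: after the partial Fourier transform in the central variable one decomposes the spectrum of $L^\mu$ into bands $[K|\mu|,(K+1)|\mu|)$, applies the cluster estimate of \cref{prop:spectral-cluster} (which is where $\|F\|_{2^{\ell},2}$ arises) together with Minkowski's inequality, and then the cap-refined Stein--Tomas bound in $\mu$ plus scaling in $r=|\mu|\sim 2^{-\ell}$ gives precisely the factor $\delta^{\frac{d_2-1}{2}(1-\vartheta_p)}2^{-\ell d_2(\frac1p-\frac12)}$ with no logarithmic loss.
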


Note that Theorem \ref{thm:restriction-type} holds for general two-step stratified Lie groups. However, the proof of \cref{thm:main} requires restricting to a subclass of stratified Lie groups that satisfies both \cref{assumptionA} and \cref{assumptionB}. This restriction arises because, in particular, we rely on Plancherel type estimates, which unfortunately do not hold in general for arbitrary two-step stratified Lie groups.

As indicated by our notation, \cref{eq:restriction-type} is derived through interpolation between the endpoints $p = 1$, $p = \ST{d_1, d_2}$, and $p = \ST{d_2}$. Under the assumptions of \cref{thm:main}, which require $\bar{d}_1 \geq d_2 - 1$, we observe that $\ST{d_2} \leq \ST{d_1}$. Therefore, in the proof of \cref{thm:main}, we apply \cref{thm:restriction-type} with $\ST{d_2} = \ST{d_1, d_2}$ and $\vartheta_p = \theta_p$. In that case, it is possible to derive \cref{eq:restriction-type} via interpolation, combining the estimate for $p = 1$ from \cref{prop:p=1} below and the estimate from \cite{Ni25R} for $p = \ST{d_2}$.

Moreover, note that, by (3.19) and (3.29) of \cite{CoSi01}, we have the Sobolev type embedding
\begin{equation}\label{eq:sobolev-embedding}
\|F\|_{L^2}\le \|F\|_{M,2} \le C_s \big( \|F\|_{L^2} + M^{-s} \|F\|_{L^2_s} \big)\quad\text{for all } s> 1/2.
\end{equation}
In the special case where $G$ is a Heisenberg type group, it is possible to prove a restriction type estimate with $\|F\|_2$ on the right hand side of \cref{eq:restriction-type}, see \cite[Theorem 3.2]{Ni24}. With the $L^2$-norm on the right hand side, restriction type estimates are equivalent to restriction type estimates for the Strichartz projectors $\mathcal P_\lambda$, which are formally given by $\mathcal P_\lambda=\delta_\lambda(L)$, where $\delta_\lambda$ is the Dirac delta distribution at $\lambda\in\R$, see \cite[Proposition 4.1]{SiYaYa14} and also \cite{LiZh11}.

\subsection{\texorpdfstring{Proof of \cref{thm:restriction-type}}{Proof of Theorem 3.1}}

For $f\in L^1(G)$ and $\mu\in\g_2^*$, let $f^\mu$ denote the $\mu$-section of the partial Fourier transform along the second layer $\g_2$ given by
\begin{equation}\label{eq:partial-FT}
f^\mu(x) = \int_{\g_2} f(x,u) e^{- i \langle\mu, u\rangle} \, du,\quad x\in \g_1.
\end{equation}
Up to some constant, this defines an isometry $\mathcal F_2:L^2(\g_1\times \g_2)\to L^2(\g_1\times \g_2^*)$. Given $f\in L^2(G)$, we also write $f^\mu=(\mathcal F_2 f)(\cdot,\mu)$ (for almost all $\mu\in\g_2^*$) in the following. Moreover, let $L^\mu$ be the \textit{$\mu$-twisted Laplacian} on $\g_1$, which is the second order differential operator on $\g_1$ given by
\[
L^\mu = -\Delta_x + \tfrac 1 4 |J_\mu x|^2 - i\, ( J_\mu x)^\intercal \, \nabla ,
\]
where $( J_\mu x)^\intercal \, \nabla = \sum_{j=1}^{d_1}(J_\mu x)_j \, \partial_{x_j}$, with $(J_\mu x)_j$ denoting the $j$-th entry of $J_\mu x$.

The operator $L^\mu$ is self-adjoint, and as such, it admits a functional calculus. Moreover, the functional calculi of $L$, $\mathbf{U}$, and $L^\mu$ are compatible via the partial Fourier transform, which follows from \cite[Proposition 1.1]{Mue90}, see also \cite[Proposition 3.1]{Ni25R}.

\begin{lemma}\label{prop:joint-calculus}
If $F:\R\times\R^{d_2}\to\C$ is a bounded Borel function, then
\[
\left(F(L,\mathbf U)f\right)^\mu = F(L^\mu,\mu) f^\mu
\]
for all $f\in L^2(G)$ and almost all $\mu\in\g_2^*$.
\end{lemma}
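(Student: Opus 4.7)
The plan is to show that the partial Fourier transform $\mathcal F_2$ in the $u$-variable intertwines $L$ with the fiber-operator $\mu\mapsto L^\mu$ and $\mathbf U$ with multiplication by $\mu$, and then invoke the joint functional calculus for commuting self-adjoint operators together with the direct integral decomposition. First, for $f\in \mathcal S(G)$, an elementary computation using $\partial_{u_k}\leftrightarrow i\mu_k$ under $\mathcal F_2$ gives
\[
(U_k f)^\mu = i\mu_k f^\mu,\qquad (X_j f)^\mu = \Bigl(\partial_{x_j} + \tfrac{i}{2}(J_\mu x)_j\Bigr) f^\mu =: X_j^\mu f^\mu.
\]
Squaring $X_j^\mu$ and summing over $j$, the skew-symmetry of $J_\mu$ (which kills the diagonal entries) yields $-\sum_{j} (X_j^\mu)^2 = -\Delta_x + \tfrac14|J_\mu x|^2 - i(J_\mu x)^\intercal\nabla = L^\mu$, so $(Lf)^\mu = L^\mu f^\mu$ and $((-iU_k)f)^\mu = \mu_k f^\mu$ on Schwartz functions.

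Next, I would identify $L^2(G)$ with the direct integral $\int^{\oplus}_{\g_2^*} L^2(\g_1)\,d\mu$ via $\mathcal F_2$ (which is an isometry up to a constant). The left-invariant operators $L,-iU_1,\dots,-iU_{d_2}$ are essentially self-adjoint on $\mathcal S(G)$ and pairwise commute, and the twisted Laplacian $L^\mu$ is essentially self-adjoint on $\mathcal S(\g_1)$ (being, up to a unitary intertwiner, a Hermite-type operator with positive spectrum). The intertwining established above extends by closure to the self-adjoint closures, so under $\mathcal F_2$ the tuple $(L,\mathbf U)$ becomes the decomposable tuple $(\int^{\oplus}\! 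L^\mu\,d\mu,\,M_\mu)$ where $M_\mu$ is multiplication by $\mu$.

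Because the fiber operators $L^\mu$ and $M_\mu$ commute fiberwise and the direct integral respects joint spectral projections, the joint spectral measure $E$ of $(L,\mathbf U)$ satisfies $\mathcal F_2 E(B) \mathcal F_2^{-1} = \int^{\oplus}_{\g_2^*} E^\mu(B)\,d\mu$ for every Borel set $B\subseteq \R\times\R^{d_2}$, where $E^\mu$ is the joint spectral measure of $(L^\mu,M_\mu)$; on product sets $B_1\times B_2$ this follows from the tensor identity $F_1(L)F_2(\mathbf U)$ and the intertwining, and the general case follows by a standard monotone class / Dynkin argument. Integrating a bounded Borel $F$ against $E$ and passing through $\mathcal F_2$ therefore gives
\[
\bigl(F(L,\mathbf U) f\bigr)^\mu = F(L^\mu,\mu)\, f^\mu
\]
for a.e.\ $\mu$, as desired.

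The main technical point I expect to handle with care is the measurability of $\mu\mapsto F(L^\mu,\mu)$ as a field of operators and the compatibility of the functional calculus with the direct integral — in particular, verifying that Borel functions can be approximated by finite linear combinations of product functions $F_1\otimes F_2$ in a way that survives the limit fiberwise and in the strong operator topology. This is the substance of the cited references \cite{Mue90,Ni25R} and is essentially a bookkeeping application of the spectral theorem for commuting self-adjoint operators combined with standard direct integral theory; no specific features of the group structure beyond the intertwining on $\mathcal S(G)$ are used.
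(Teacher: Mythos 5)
Your proposal is correct and follows essentially the same route as the paper, which proves this lemma only by citing \cite{Mue90}*{Proposition 1.1} and \cite{Ni25R}*{Proposition 3.1}: those references establish exactly the intertwining of $X_j$ and $U_k$ under the partial Fourier transform (your computation of $X_j^\mu$ and of $-\sum_j(X_j^\mu)^2=L^\mu$ is correct, the skew-symmetry killing the diagonal term) and then transfer the joint functional calculus through the direct integral decomposition. The remaining technical points you flag (essential self-adjointness on the Schwartz core, measurability of the fiber operators, and the monotone class argument reducing general Borel $F$ to products $F_1\otimes F_2$) are precisely the bookkeeping carried out in the cited works, so no gap remains.
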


By \cref{prop:joint-calculus}, we have
\begin{equation}\label{eq:joint-calculus-ii}
\big(F(L)\chi(2^\ell U)\zeta_j(\mathbf U)f\big)^\mu = F(L^\mu)\chi(2^\ell |\mu|) \zeta_j(\mu) f^\mu
\end{equation}
for all $\mu\in\g_2^*$ and all $f\in\mathcal S(G)$. To prove \cref{thm:main}, we apply the $L^p$-$L^2$ spectral cluster estimate from \cite[Theorem 4.1]{Ni25R} for the twisted Laplacian $L^\mu$.

\begin{proposition}\label{prop:spectral-cluster}
If $1\le p \le \ST{d_1}$, then
\[
\norm*{\mathbf{1}_{[K|\mu|,(K+1)|\mu|)}(L^\mu) }_{p\to 2} \le C_p\, |\mu|^{\frac{d_1}2(\frac 1 p - \frac 1 2)}(K+1)^{\frac{d_1}2(\frac 1 p - \frac 1 2)-\frac 1 2}
\]
for all $K\in \N$ and almost all $\mu\in\g_2^*$.
\end{proposition}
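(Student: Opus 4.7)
The first step is a scaling reduction to the normalized case $|\mu|=1$. Using the homogeneity $J_\mu=|\mu|\,J_{\mu/|\mu|}$, one verifies that the dilation $D_t f(x)=t^{d_1/4} f(t^{1/2} x)$, which is an isometry on $L^2(\g_1)$, satisfies $D_{|\mu|}^{-1}L^\mu D_{|\mu|}=|\mu|\,L^{\mu/|\mu|}$. Writing $\nu=\mu/|\mu|$, functional calculus gives
\[
\mathbf{1}_{[K|\mu|,(K+1)|\mu|)}(L^\mu)=D_{|\mu|}\,\mathbf{1}_{[K,K+1)}(L^\nu)\,D_{|\mu|}^{-1}.
\]
Since $\norm{D_{|\mu|}^{-1}}_{L^p\to L^p}=|\mu|^{\frac{d_1}{2}(\frac{1}{p}-\frac{1}{2})}$ while $D_{|\mu|}$ is unitary on $L^2$, the task reduces to proving the uniform cluster estimate
\[
\norm{\mathbf{1}_{[K,K+1)}(L^\nu)}_{p\to 2}\le C_p\,(K+1)^{\frac{d_1}{2}(\frac{1}{p}-\frac{1}{2})-\frac{1}{2}}
\]
for $\nu\in S^{d_2-1}$ and $1\le p\le \ST{d_1}$.

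To establish this cluster estimate, I would combine a TT*-argument with oscillatory integral analysis of the propagator $e^{itL^\nu}$. By a standard Littlewood--Paley type reduction, one may replace the sharp cutoff by a smoothed version $\chi(L^\nu-K)$ for some $\chi\in C_c^\infty(\R)$. By duality, the $L^p\to L^2$ bound is then equivalent to an $L^p\to L^{p'}$ bound for $\chi^2(L^\nu-K)$, whose integral kernel is obtained by Fourier inversion as $\int\widehat{\chi^2}(t)\,e^{-iKt}\,e^{itL^\nu}\,dt$. Thus the whole matter reduces to a pointwise kernel estimate for a Schwartz-weighted average of the Mehler-type propagator of $L^\nu$.

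The propagator is essentially explicit once one exploits the block structure of $J_\nu$. In an orthonormal basis of $\g_1$ adapted to $J_\nu$, the space splits as $\ker J_\nu\oplus\bigoplus_n V_n^\nu$, with $L^\nu$ acting as the free Schrödinger operator on $\ker J_\nu\cong\R^{r_0^\nu}$ and as a two-dimensional Landau--Hermite operator with frequency $b_n^\nu$ on each oscillator block $V_n^\nu$ of dimension $2r_n^\nu$; on each factor the propagator admits a classical closed-form Mehler expression. Combining the Stein--Tomas restriction estimate on $\R^{r_0^\nu}$ with Koch--Tataru type spectral cluster estimates on each Hermite block, and using $d_1=r_0^\nu+2\sum_n r_n^\nu$, delivers the desired exponent. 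The main obstacle is uniformity in $\nu$: the block decomposition, the rank $r_0^\nu$ and the frequencies $b_n^\nu$ all depend on $\nu$, and the spectral data $\mu\mapsto P_n^\mu,\,b_n^\mu$ from \cref{prop:J_mu-spectral} may develop singularities on $\g_2^*\setminus\g_{2,r}^*$. The saving grace is that the target exponent depends only on $d_1$ and $p$, not on the fine spectral data of $J_\nu$, so the estimate survives the limit onto the singular stratum provided one tracks the continuity and homogeneity properties in \cref{prop:J_mu-spectral} carefully.
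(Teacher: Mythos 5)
Your opening reduction is fine: $D_{|\mu|}^{-1}L^\mu D_{|\mu|}=|\mu|\,L^{\mu/|\mu|}$ does hold, $D_{|\mu|}$ is unitary on $L^2$ and has $L^p$-operator norm $|\mu|^{\frac{d_1}{2}(\frac1p-\frac12)}$, so the proposition correctly reduces to a cluster estimate for $L^\nu$, $|\nu|=1$, with a constant uniform in $\nu$. But be aware that the paper does not prove this proposition at all; it is imported verbatim from \cite[Theorem~4.1]{Ni25R}. What you are sketching is therefore a re-proof of that external theorem, and the sketch omits exactly the part that makes it a theorem.

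There are two concrete gaps. First, the step ``combining Stein--Tomas on $\ker J_\nu$ with Koch--Tataru type cluster estimates on each oscillator block delivers the desired exponent'' is not a tensorization that can be waved through: the unit spectral window couples the factors, since the projector is a sum over the joint spectrum $|\xi|^2+\sum_n(2k_n+r_n)b_n^\nu\in[K,K+1)$, with a number of contributing lattice points $\mathbf{k}$ that grows like powers of $K$ and of $(b_n^\nu)^{-1}$ (and each twisted-Laplacian eigenvalue has infinite multiplicity); a factor-wise bound followed by the triangle or Cauchy--Schwarz inequality loses powers rather than producing the sharp exponent $(K+1)^{\frac{d_1}{2}(\frac1p-\frac12)-\frac12}$. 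Worse, the ranges do not match: Stein--Tomas on $\R^{r_0}$ only covers $p\le \ST{r_0}$, and the special Hermite cluster bounds on a $2r_n$-dimensional block (they are twisted Laplacians, not two-dimensional Landau operators) have their own smaller admissible ranges, whereas the claim is for the larger range $p\le\ST{d_1}$; so the factor-by-factor scheme as stated cannot even reach the asserted range of $p$ without a genuinely joint argument or interpolation scheme --- which is precisely the content of \cite{Ni25R}. Second, your treatment of uniformity in $\nu$ addresses the wrong issue: the singular stratum is Lebesgue-null and already excluded by ``almost all $\mu$''; the actual problem is that constants obtained from block-wise Mehler/oscillator analysis blow up as the frequencies $b_n^\nu$ tend to $0$ or coalesce when $\nu$ approaches that stratum from within the regular set (the Mehler propagator has singular times at $\pi\Z/b_n^\nu$), and no continuity or limiting argument can upgrade $\nu$-dependent constants to a uniform one --- the fact that the target exponent depends only on $d_1$ and $p$ gives no such upgrade. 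Uniformity has to be built into the estimate itself, and as it stands your proposal does not do that.
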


The case $p=1$ of \cref{thm:restriction-type} will follow from a Plancherel estimate for the associated integral kernel.

\begin{lemma}\label{lem:Plancherel}
The following statements hold:
\begin{enumerate}
\item If $\mathcal K_{\ell,j}$ denotes the (group) convolution kernel of $F(L)\chi(2^\ell U)\zeta_j(\mathbf U)$, then
\[
\| \mathcal K_{\ell,j} \|_2 \le C_{A,\chi} \, \delta^{(d_2-1)/2} \, 2^{-\ell d_2/2} \, \|F\|_2 \quad \text{for all }\ell\in\Z.
\]
\item There is some $\ell_0\in\N$, which depends only on the matrices $J_\mu$ of \cref{eq:skew-form-ii}, the inner product $\langle\cdot,\cdot\rangle$ on $\g$, and the compact subset $A\subseteq \R^+$ of \cref{thm:restriction-type} such that
\[
F(L)\chi(2^\ell U)=0 \quad \text{for all } \ell<-\ell_0.
\]
\end{enumerate}
\end{lemma}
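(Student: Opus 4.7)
The plan is to treat the two parts separately, using the joint functional calculus of \cref{prop:joint-calculus} to pass to fibrewise estimates on $\g_1$.

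For part (1), I would first apply Plancherel along the central variable: since the partial Fourier transform $\mathcal F_2$ is an isometry up to a dimensional constant,
\[
\norm{\mathcal K_{\ell,j}}_{L^2(G)}^2 \sim \int_{\g_2^*}\norm{\mathcal K_{\ell,j}^\mu}_{L^2(\g_1)}^2\,d\mu.
\]
By \cref{prop:joint-calculus}, $\mathcal K_{\ell,j}^\mu$ is the convolution kernel on $\g_1$ (with respect to the $\mu$-twisted convolution) of $F(L^\mu)\chi(2^\ell|\mu|)\zeta_j(\mu)$. The cutoffs restrict the effective range of $\mu$ to a spherical cap of radius $\sim 2^{-\ell}$ and angular aperture $\sim \delta$, whose Lebesgue volume is $\sim \delta^{d_2-1}2^{-\ell d_2}$. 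For each such $\mu$, the plan is to invoke the Plancherel formula for the twisted Laplacian $L^\mu$. Decomposing $\g_1 = (\ker J_\mu)^\perp \oplus \ker J_\mu$, the operator $L^\mu$ splits as a sum of harmonic oscillators (with discrete spectrum spaced by the $b_n^\mu \sim |\mu|$) on the first factor and a flat Laplacian (with spectrum $[0,\infty)$) on the second. A joint spectral computation, together with a change of variable that converts the sum/integral over the spectrum of $L^\mu$ into an integral against Lebesgue measure on $\R$, should yield $\norm{\mathcal K_{\ell,j}^\mu}_{L^2(\g_1)}^2 \lesssim \norm{F}_2^2$ uniformly in $\mu$; the $|\mu|$-dependent factors from the spectral density are expected to cancel against those from the Plancherel normalization, as one checks directly in the Heisenberg model case. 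Integration over the localized $\mu$-region then produces the claimed bound.

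For part (2), I would show that $\sigma(L^\mu) \subseteq [c|\mu|,\infty)$ for some universal $c>0$. Since $G$ is stratified, $[\g_1,\g_1]=\g_2$, and hence $J_\mu \neq 0$ whenever $\mu\in \g_2^*\setminus\{0\}$. The restriction of $L^\mu$ to $(\ker J_\mu)^\perp$ is a sum of harmonic oscillators whose ground-state energy equals $\tfrac12\operatorname{tr}\sqrt{-J_\mu^2}$, which is continuous, homogeneous of degree one in $\mu$, and strictly positive for $\mu\neq 0$. By compactness of $S^{d_2-1}$, it is bounded below by $c|\mu|$ on $\g_2^*\setminus\{0\}$; since adding the $[0,\infty)$ spectrum of the Euclidean Laplacian on $\ker J_\mu$ does not lower this threshold, $\sigma(L^\mu)\subseteq [c|\mu|,\infty)$ follows. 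Because $A$ and the support of $\chi$ are compact in $\R^+$, the condition $\chi(2^\ell|\mu|)\neq 0$ forces $|\mu|\gtrsim 2^{-\ell}$, and for $\ell<-\ell_0$ with $\ell_0$ chosen large this makes $c|\mu|>\sup A$. Then $\sigma(L^\mu)\cap A = \emptyset$, whence $F(L^\mu)=0$ and, by \cref{prop:joint-calculus}, $F(L)\chi(2^\ell U)=0$.

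The main obstacle is the fibrewise Plancherel estimate in part (1) when $\ker J_\mu$ is non-trivial. The delicate point is extracting the correct $|\mu|$-dependence of the spectral density on each fibre, so that the normalization factors from the twisted-convolution Plancherel formula combine precisely with the integration over the $\mu$-region to yield the uniform constant and the factor $2^{-\ell d_2/2}$, rather than some weighted $L^2$-norm of $F$. By contrast, the cap partition $(\zeta_j)$ is entirely benign, contributing only the volume factor $\delta^{(d_2-1)/2}$ from the Lebesgue measure of its angular support.
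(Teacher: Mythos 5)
Your part (2) is essentially correct and matches the intended argument: the ground-state energy of the oscillator part of $L^\mu$ is $\sum_n r_n b_n^\mu=\tfrac12\operatorname{tr}\sqrt{-J_\mu^2}$, which is continuous, homogeneous of degree one, and nonzero for $\mu\neq 0$ because $[\g_1,\g_1]=\g_2$ forces $J_\mu\neq 0$; combined with $|\mu|\sim 2^{-\ell}$ on $\supp\chi(2^\ell\cdot)$ and the compactness of $A$, this kills the operator for $\ell$ very negative.

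Part (1), however, has a genuine gap at exactly the point you flag as "the main obstacle": the fibrewise bound $\norm{\mathcal K_{\ell,j}^\mu}_{L^2(\g_1)}\lesssim\norm{F}_2$ uniformly in $\mu$ is false in general. For fixed $\mu$ the oscillator part of $L^\mu$ has pure point spectrum $\{\eigvp{k}{\mu}\}$ with spacing $\sim|\mu|$, so the twisted Plancherel formula for $\norm{\mathcal K^\mu_{\ell,j}}_{L^2(\g_1)}^2$ involves the sampled values $|F(|\xi|^2+\eigvp{k}{\mu})|^2$ (and, when $r_0\ge 1$, an integration against the possibly singular weight $s^{r_0/2-1}$); point evaluations of a bounded Borel function are not controlled by its $L^2$ norm. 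Concretely, in the Heisenberg/Métivier case $r_0=0$ (the very "model case" you propose to check), taking $F$ supported in tiny neighborhoods of the eigenvalues $\eigvp{k}{\mu^0}$ for one fixed $\mu^0$ makes $\norm{F}_2$ arbitrarily small while the fibre norm at $\mu^0$ stays of unit size; uniformly in $\mu$ the best available bound is in terms of the coarser norm $\norm{F}_{2^\ell,2}$, which is precisely why both norms appear in \cref{thm:restriction-type}. The cancellation you hope for does not happen fibrewise; it happens only after integrating in $\mu$. The paper's proof does exactly this: starting from the explicit Plancherel formula of \cite{MaMue14N}, it bounds the spectral weights by $\prod_n b_n^\mu$, passes to polar coordinates $\mu=\rho\omega$, and substitutes $\rho=(\eigvp{k}{\omega})^{-1}\lambda$ in the radial integral, so that the dilated point spectrum sweeps out genuine integrals $\int|F(s+\lambda)|^2\,d\lambda/\lambda$; the support condition $2^\ell(\eigvp{k}{\omega})^{-1}\lambda\in\supp\chi$ then restricts to $k_n\lesssim 2^\ell\lambda(b_n^\omega)^{-1}$, and summing over $\mathbf k$ cancels the factor $\prod_n b_n^\omega$, leaving exactly $\delta^{d_2-1}2^{-\ell d_2}\norm{F}_2^2$ (the cap only enters through the $\omega$-integration over $V_j$, as you say). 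So your outline needs to be reorganized so that the radial $|\mu|$-integration is performed before any estimate on the discrete spectral sum; as written, the key step would fail.
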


\begin{proof}
By \cite[Corollary 8]{MaMue14N}, using the notation of \cref{prop:J_mu-spectral}, 
\begin{equation}
\begin{split}
\| \mathcal K_{\ell,j} \|_2^2
= (2 \pi)^{|\mathbf{r}|_1-(d_1+d_2)} \int_{\mathfrak{g}_{2, r}^*} & \int_0^\infty \sum_{\mathbf{k} \in \mathbb{N}^N}\left|F(s+\eigvp{k}{\mu})\,\chi(2^\ell|\mu|) \, \zeta_j(\mu) \right|^2 \\
& \times \prod_{n=1}^N\left[\left(b_n^\mu\right)^{r_n}
\binom{k_n+r_n-1}{k_n}\right] d \sigma_{r_0}(s) \,d \mu, \label{eq:rest-interp-1}
\end{split}
\end{equation}
where $\mu\mapsto\mathbf{b}^\mu=(b_1^\mu,\dots,b_N^\mu)\in [0,\infty)^N$ is homogeneous of degree 1, $\eigvp{k}{\mu}$ is given by
\[
\eigvp{k}{\mu} = \sum_{n=1}^N \left(2k_n+r_n\right)b_n^\mu,
\]
and $\sigma_{r_0}$ is the Dirac delta at 0 if $r_0 =0$, and
\[
d\sigma_{r_0}(s)=\frac{\pi^{r_0 / 2}}{\Gamma(r_0 / 2)} s^{r_0 / 2} \frac{d s}{s}\quad \text{if } r_0>0.
\]
Note that $b_n^\mu \left(k_n+1\right) \le \eigvp{k}{\mu} \lesssim_A 1$
whenever $s+\eigvp{k}{\mu}\in\supp F\subseteq A$, where $A\subseteq \R^+$ is the fixed compact subset of \cref{thm:restriction-type}. Moreover,
\[
\binom{k_n+r_n-1}{k_n} \sim (k_n+1)^{r_n-1}.
\]
Thus, the right-hand side of \cref{eq:rest-interp-1} can be bounded by a constant times
\begin{equation}\label{eq:Plancherel-computation}
\int_0^\infty \int_{\mathfrak{g}_{2, r}^*} \sum_{\mathbf{k} \in \mathbb{N}^N}\left|F(s+\eigvp{k}{\mu})\,\chi(2^\ell|\mu|)\, \zeta_j(\mu) \right|^2 \prod_{n=1}^Nb_n^\mu \,d \mu \,d \sigma_{r_0}(s).
\end{equation}
We use polar coordinates $\mu = \rho \omega$, where $\rho \in [0,\infty)$ and $|\omega|=1$. Since $\mu\mapsto\mathbf{b}^\mu$ is homogeneous of degree 1, we have $\eigvp{k}{\mu} = \rho \eigvp{k}{\omega}$. Thus, using $\supp (\zeta_j|_{S^{d_2-1}})\subseteq V_j$, \cref{eq:Plancherel-computation} is bounded by a constant times
\begin{align*}
2^{-\ell (d_2+N)}\int_0^\infty \int_{V_j}  \int_0^\infty \sum_{\mathbf{k} \in \mathbb{N}^N} & \left|F(s+\rho\eigvp{k}{\omega})\,\chi(2^\ell\rho)\right|^2 \\
& \times \prod_{n=1}^N b_n^\omega \,\frac{d\rho}{\rho} \,d\sigma(\omega) \,d \sigma_{r_0}(s).
\end{align*}
Substituting $\rho=(\eigvp{k}{\omega})^{-1}\lambda$ in the inner integral, we see that the above term equals
\begin{align*}
2^{-\ell (d_2+N)}
\int_0^\infty \int_{V_j} \int_0^\infty \sum_{\mathbf{k} \in \mathbb{N}^N}\left|F(s+\lambda)\,\chi(2^\ell (\eigvp{k}{\omega})^{-1} \lambda)\right|^2\\
\times \prod_{n=1}^Nb_n^{\omega} \,\frac{d \lambda}\lambda \,d\sigma(\omega) \,d \sigma_{r_0}(s).
\end{align*}
If $2^\ell (\eigvp{k}{\omega})^{-1} \lambda\in\supp\chi$, then $(2k_n+r_n) b_n^\omega \le \eigvp{k}{\omega} \sim 2^\ell \lambda$. Thus, $k_n\lesssim 2^\ell \lambda (b_n^\omega)^{-1}$ for the non-vanishing summands in the above sum over $\mathbf{k}$. Hence, the above term is bounded by a constant times
\[
\delta^{d_2-1} 2^{-\ell d_2}
\int_0^\infty \int_0^\infty \left|F(s+\lambda)\right|^2 \lambda^N \frac{d \lambda}\lambda \,d \sigma_{r_0}(s),
\]
which can be dominated by $\delta^{d_2-1} 2^{-\ell d_2}\|F\|^2_2$ since $F$ is compactly supported.
\end{proof}

Using \cref{lem:Plancherel}, we obtain \cref{thm:restriction-type} for $p=1$.

\begin{proposition}\label{prop:p=1}
The restriction type estimate \cref{eq:restriction-type} holds true for $p=1$, that is,
\[
\norm{ F(L)\chi(2^\ell U) \zeta_j(\mathbf U) }_{1\to 2} \le C_{\chi} \,\delta^{(d_2-1)/2}\, 2^{-\ell d_2/2}\, \|F\|_2 \quad \text{for all }\ell\in\Z.
\]
\end{proposition}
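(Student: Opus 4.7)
The plan is to use the fact that the operator $T=F(L)\chi(2^\ell U)\zeta_j(\mathbf U)$, being defined via the joint functional calculus of the left-invariant operators $L$ and $\mathbf U$, is itself left-invariant on $G$. Since $G$ is a connected, simply connected nilpotent Lie group, it is unimodular, with bi-invariant Haar measure coinciding with Lebesgue measure in the given exponential coordinates. Hence $T$ acts as right group-convolution with its convolution kernel $\mathcal K_{\ell,j}$, that is, $Tf=f*\mathcal K_{\ell,j}$. By Young's convolution inequality (with exponents $1$, $2$, $2$) applied on the unimodular group $G$, one has
\[
\|Tf\|_{L^2(G)} \le \|f\|_{L^1(G)} \, \|\mathcal K_{\ell,j}\|_{L^2(G)},
\]
and therefore $\|T\|_{L^1\to L^2} \le \|\mathcal K_{\ell,j}\|_{L^2(G)}$.

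At this point the proposition becomes an immediate consequence of the Plancherel-type bound
\[
\|\mathcal K_{\ell,j}\|_{L^2(G)} \le C_{A,\chi}\, \delta^{(d_2-1)/2}\, 2^{-\ell d_2/2}\, \|F\|_2
\]
already supplied by \cref{lem:Plancherel}\,(1). All the real work sits in that lemma, whose proof proceeds via the orbit-method Plancherel formula of \cite{MaMue14N} combined with the simultaneous spectral decomposition of \cref{prop:J_mu-spectral}; it is there that the cap size $\delta$ on the unit sphere in $\g_2^*$ and the dyadic scale $2^\ell$ on the spectrum of $U$ are tracked carefully, after passing to polar coordinates $\mu=\rho\omega$ and substituting $\rho=(\lambda_{\mathbf k}^\omega)^{-1}\lambda$ to separate the $\mu$-integral from the $F$-dependence. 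The main obstacle for \cref{prop:p=1} as a standalone statement is therefore nothing beyond the kernel estimate itself: the reduction from an $L^1\to L^2$ bound to an $L^2$ kernel bound is the trivial endpoint of Young's inequality, and combining the two displayed inequalities yields the claimed estimate.
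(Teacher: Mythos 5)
Your proposal is correct and follows exactly the paper's own argument: the $L^1\to L^2$ bound is reduced via Young's inequality ($\|f*\mathcal K_{\ell,j}\|_2\le\|f\|_1\|\mathcal K_{\ell,j}\|_2$) to the $L^2$ bound on the convolution kernel, which is precisely \cref{lem:Plancherel}\,(1). No difference in substance from the paper's proof.
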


\begin{proof}
If $\mathcal K_{\ell,j}$ denotes the convolution kernel of $F(L)\chi(2^\ell U)\zeta_j(\mathbf U)$, then
\[
\| F(L)\chi(2^\ell U) \zeta_j(\mathbf U) f \|_2 = \| f * \mathcal K_{\ell,j} \|_2 \le  \| f \|_1  \| \mathcal K_{\ell,j} \|_2,
\]
where $*$ denotes the group convolution on $G$.
\end{proof}

It remains to show \cref{eq:restriction-type} for $p=\ST{d_1,d_2}$. To that end, let $\vartheta_p\in [0,1]$ be such that $1/p = (1-\vartheta_p) + \vartheta_p/\ST{d_2}$. We show the following:

\begin{proposition}\label{prop:p=ST}
If $1\le p\le \ST{d_1,d_2}$, then,
\begin{equation}\label{eq:restriction-type-i}
\norm*{ F(L)\chi(2^\ell U) \zeta_j(\mathbf U) }_{p\to 2}
\le C_{p,\chi} \, \delta^{\frac{d_2-1} 2 (1-\vartheta_p)}\,2^{-\ell d_2(\frac 1 p - \frac 1 2)} \, \|F\|_{2^{\ell},2},
\end{equation}
for all $\ell\in\Z$, where $F,\chi,\zeta_j$ are as in \cref{thm:restriction-type}.
\end{proposition}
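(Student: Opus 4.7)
\emph{Plan.} I would prove Proposition \ref{prop:p=ST} by complex interpolation between the $p=1$ estimate of Proposition \ref{prop:p=1} and an endpoint bound at $p = \ST{d_1,d_2}$. Since $\|F\|_2 \le \|F\|_{2^\ell,2}$ by \cref{eq:sobolev-embedding}, Proposition \ref{prop:p=1} immediately yields the $p=1$ case of \cref{eq:restriction-type-i}, with factor $\delta^{(d_2-1)/2}$. The intermediate exponent $\delta^{(d_2-1)(1-\vartheta_p)/2}$ then arises from interpolating linearly between $(d_2-1)/2$ at $p=1$ (where $\vartheta_1 = 0$) and $0$ at $p = \ST{d_2}$ (where $\vartheta_{\ST{d_2}} = 1$).

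\emph{Endpoint at $p = \ST{d_1,d_2}$.} I would reduce to a slicewise estimate for the twisted Laplacian $L^\mu$. By Lemma \ref{prop:joint-calculus} combined with Plancherel along $\g_2$,
\[
\|F(L)\chi(2^\ell U)\zeta_j(\mathbf U) f\|_{L^2(G)}^2 = c\int_{\g_2^*} |\chi(2^\ell|\mu|)|^2 \,|\zeta_j(\mu)|^2\, \|F(L^\mu) f^\mu\|_{L^2(\g_1)}^2 \,d\mu,
\]
so it suffices to bound $\|F(L^\mu) g\|_{L^2(\g_1)}$ in terms of $\|g\|_{L^p(\g_1)}$ uniformly for $|\mu|\sim 2^{-\ell}$. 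Decomposing $F = \sum_K F\,\mathbf{1}_{[K|\mu|,(K+1)|\mu|)}$, using orthogonality of the associated spectral projections of $L^\mu$, and applying Proposition \ref{prop:spectral-cluster} block by block should yield an estimate of the form
\[
\|F(L^\mu) g\|_{L^2(\g_1)} \lesssim |\mu|^{\frac{d_1}{2}(\frac{1}{p}-\frac{1}{2})}\, \|F\|_{|\mu|^{-1},2}\, \|g\|_{L^p(\g_1)}
\]
for $p \le \ST{d_1}$, which is essentially the restriction type estimate for the twisted Laplacian proved in \cite{Ni25R}. Substituting this, integrating in $\mu$ using $|\mu|\sim 2^{-\ell}$, and invoking a Hausdorff--Young type passage between $\|f^\mu\|_{L^p(\g_1)}$ and $\|f\|_{L^p(G)}$ (which restricts $p\le \ST{d_2}$) yields the desired endpoint for $p \le \ST{d_1,d_2}$, with $\|F\|_{2^\ell,2}$ on the right and no $\delta$ factor.

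\emph{Interpolation and main obstacle.} Riesz--Thorin with target $L^2(G)$, source interpolating between $L^1(G)$ and $L^{\ST{d_1,d_2}}(G)$, and the source-side bound $\|F\|_{2^\ell,2}$ used uniformly at both endpoints, then gives Proposition \ref{prop:p=ST} for all $p \in [1,\ST{d_1,d_2}]$ with the correct $\delta$ exponent. The main obstacle is preserving the absence of $\delta$-loss at the upper endpoint: the cap cut-off $\zeta_j(\mathbf U)$ has $C^N$-norms of order $\delta^{-N}$, so it cannot be absorbed as a standard Mikhlin multiplier without incurring $\delta^{-N}$ losses. The resolution is that after the partial Fourier transform, $\zeta_j(\mu)$ enters only as an $L^\infty$-bounded pointwise scalar factor inside the $\mu$-integral, so the restriction-type estimate sees no $\delta$-dependence at the endpoint. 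The entire $\delta^{(d_2-1)/2}$ gain is captured instead at $p=1$ via the Plancherel calculation of Lemma \ref{lem:Plancherel}, which exploits that a cap covers only $\delta^{d_2-1}$ of the sphere's surface area.
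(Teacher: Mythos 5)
Your overall scheme — prove a cap-free endpoint at $p=\ST{d_1,d_2}$ by the slicewise argument (\cref{prop:joint-calculus}, cluster decomposition, \cref{prop:spectral-cluster}, then passing from $\|f^\mu\|_{L^p(\g_1)}$ to $\|f\|_{L^p(G)}$) and interpolate with the $p=1$ bound of \cref{prop:p=1} — is close in spirit to the paper, and the paper itself remarks that this interpolation shortcut works when $\ST{d_2}=\ST{d_1,d_2}$, i.e.\ $d_1\ge d_2$, which is all that \cref{thm:main} needs. But it does not prove \cref{prop:p=ST} as stated. Riesz--Thorin between your two endpoints yields the exponent $\delta^{\frac{d_2-1}{2}(1-\theta_p)}$ with $\theta_p$ defined through $\ST{d_1,d_2}$, whereas the proposition asserts $\delta^{\frac{d_2-1}{2}(1-\vartheta_p)}$ with $\vartheta_p$ defined through $\ST{d_2}$. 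These coincide only if $\ST{d_1,d_2}=\ST{d_2}$; the proposition (like \cref{thm:restriction-type}) is stated for arbitrary two-step groups, where $d_1<d_2$ is possible (e.g.\ the free two-step group on four generators has $d_1=4$, $d_2=6$), and then for $1<p\le\ST{d_1}$ your exponent is strictly smaller than the claimed one. The flaw is in your ``resolution'': the absence of a $\delta$-gain is forced by Knapp only at $p=\ST{d_2}$, not at $p=\ST{d_1}<\ST{d_2}$. The paper does not interpolate in $p$ at all; it runs the slicewise argument at every $p\le\ST{d_1,d_2}$ and inserts the cap-localized Stein--Tomas estimate of \cref{lem:caps} for $f(x,\cdot)$ on $\g_2$, which produces the gain $\delta^{\frac{d_2-1}{2}(1-\vartheta_p)}$ \emph{and} the factor $2^{-\ell d_2(\frac1p-\frac12)}$ simultaneously, at every such $p$. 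That is the missing ingredient if you want the statement in full generality.

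Separately, your intermediate slicewise inequality $\|F(L^\mu)g\|_{L^2(\g_1)}\lesssim |\mu|^{\frac{d_1}{2}(\frac1p-\frac12)}\|F\|_{|\mu|^{-1},2}\|g\|_{L^p(\g_1)}$ is too strong and is not what \cref{prop:spectral-cluster} (or the cited truncated restriction estimate) gives: test it at $p=1$ on $F=\mathbf 1_{[K|\mu|,(K+1)|\mu|)}$ with $(K+1)|\mu|\sim 1$, where $\|F\|_{|\mu|^{-1},2}\sim|\mu|^{1/2}$ and the cluster norm is genuinely of size $|\mu|^{1/2}$, while your right-hand side would be $|\mu|^{\frac{d_1}{4}+\frac12}$. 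The correct consequence of the cluster bound uses that $\supp F\subseteq A$ compact in $\R^+$ forces $(K+1)|\mu|\sim_A 1$ on all contributing clusters, so the powers of $|\mu|$ and $K+1$ cancel and one only gets $\|F(L^\mu)f^\mu\|_{L^2(\g_1)}\lesssim\|F\|_{2^\ell,2}\,\|f^\mu\|_{L^p(\g_1)}$, with no power of $|\mu|$; the entire factor $2^{-\ell d_2(\frac1p-\frac12)}$ must then come from the $\mu$-integration over the annulus $|\mu|\sim 2^{-\ell}$ (Minkowski plus the restriction step — here, by the way, H\"older and Hausdorff--Young alone already give this $\delta$-free factor for all $p\le 2$, so your parenthetical restriction to $p\le\ST{d_2}$ is not where that constraint comes from). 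With this correction your endpoint estimate does go through, so that slip is repairable; the exponent mismatch described above is the genuine gap of the interpolation approach.
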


Since the spectral localization corresponding to the operator $\zeta_j(\mathbf U)$ corresponds to a frequency localization in $\mu$ to a cap of size $\delta$ on the unit sphere, we also need the following restriction estimate. This is a version of the Stein–Tomas restriction estimate, where the Fourier transform is further localized to a cap on the sphere.

\begin{lemma}\label{lem:caps}
If $1\le p\le \ST{d_2}$, then
\begin{equation}\label{eq:ST-delta}
\bigg(\int_{\angle(\omega,\omega_0)<\delta} |\hat f(\omega)|^2 \,d\sigma(\omega)\bigg)^{\frac 1 2} \le C_p \, \delta^{\frac{d_2-1} 2 (1-\vartheta_p)}\,\|f\|_{L^p(\R^{d_2})}
\end{equation}
for all $\omega_0\in S^{d_2-1}$, $\delta>0$ and $f\in \mathcal S(\R^{d_2})$.
\end{lemma}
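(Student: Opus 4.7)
The plan is to establish \cref{eq:ST-delta} by interpolating between the two endpoints $p=1$ and $p=\ST{d_2}$.

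First, I would handle the endpoint $p=1$, which corresponds to $\vartheta_p=0$. Here the estimate reduces to
\[
\bigg(\int_{\angle(\omega,\omega_0)<\delta} |\hat f(\omega)|^2 \,d\sigma(\omega)\bigg)^{1/2} \le \|\hat f\|_{L^\infty(S^{d_2-1})}\,\sigma\bigl(\{\omega\in S^{d_2-1}:\angle(\omega,\omega_0)<\delta\}\bigr)^{1/2},
\]
and combining the trivial bound $\|\hat f\|_\infty\le \|f\|_{L^1(\R^{d_2})}$ with the elementary surface-measure estimate $\sigma(\{\omega:\angle(\omega,\omega_0)<\delta\})\lesssim \delta^{d_2-1}$ yields the desired inequality with the factor $\delta^{(d_2-1)/2}$.

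Next, the endpoint $p=\ST{d_2}$, which corresponds to $\vartheta_p=1$. Here the exponent $\frac{d_2-1}{2}(1-\vartheta_p)$ vanishes, so the claim is precisely the classical Stein--Tomas restriction estimate for the sphere $S^{d_2-1}$ (applicable because the sphere has non-vanishing Gaussian curvature): for all $f\in\mathcal S(\R^{d_2})$,
\[
\|\hat f\|_{L^2(S^{d_2-1})}\le C\,\|f\|_{L^{\ST{d_2}}(\R^{d_2})}.
\]
Restricting the left-hand norm to the cap $\{\omega:\angle(\omega,\omega_0)<\delta\}$ only decreases it, so the estimate holds at this endpoint with constant independent of $\delta$ and $\omega_0$.

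Finally, I would interpolate via Riesz--Thorin. Consider, for each fixed $\omega_0$ and $\delta$, the linear operator $T_{\omega_0,\delta}:f\mapsto \mathbf{1}_{\{\omega:\angle(\omega,\omega_0)<\delta\}}\cdot\hat f|_{S^{d_2-1}}$ viewed as a map into $L^2(S^{d_2-1})$. The endpoint bounds above give $\|T_{\omega_0,\delta}\|_{L^1\to L^2}\lesssim \delta^{(d_2-1)/2}$ and $\|T_{\omega_0,\delta}\|_{L^{\ST{d_2}}\to L^2}\lesssim 1$, both uniformly in $\omega_0$ and $\delta$. Riesz--Thorin interpolation, with the exponent relation $1/p=(1-\vartheta_p)+\vartheta_p/\ST{d_2}$, then produces
\[
\|T_{\omega_0,\delta}\|_{L^p\to L^2}\lesssim \bigl(\delta^{(d_2-1)/2}\bigr)^{1-\vartheta_p}\cdot 1^{\vartheta_p} = \delta^{\frac{d_2-1}{2}(1-\vartheta_p)},
\]
which is the claimed estimate. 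There is no real obstacle here: both endpoint bounds are classical, and the whole statement is essentially a Riesz--Thorin interpolation between a trivial $L^1$ bound coming from the measure of the cap and the Stein--Tomas theorem, carried out uniformly in the parameters $\omega_0$ and $\delta$.
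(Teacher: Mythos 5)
Your proof is correct and follows essentially the same route as the paper: a trivial $L^1\to L^2(\text{cap})$ bound of size $\delta^{(d_2-1)/2}$ from the measure of the cap, interpolated (Riesz--Thorin, with the stated relation defining $\vartheta_p$) against the endpoint Stein--Tomas estimate on $S^{d_2-1}$. Your write-up is in fact slightly cleaner at the $p=1$ endpoint, where you correctly bound $\|\hat f\|_\infty\le\|f\|_{L^1}$ (the paper's displayed $\|f\|_\infty$ there is a slip for this).
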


\begin{proof}
The proof follows by interpolation. For $p=1$, we have
\[
\bigg(\int_{\angle(\omega,\omega_0)<\delta} |\hat f(\omega)|^2 \,d\sigma(\omega)\bigg)^{\frac 1 2} \lesssim
\delta^{\frac{d_2-1}{2}}\,\|f\|_{\infty}.
\]
Interpolating this with the endpoint Stein--Tomas estimate
\[
\|\hat f\|_{L^2(S^{d_2-1})} \lesssim \|f\|_{\ST{d_2}},
\]
we obtain \cref{eq:ST-delta} with a constant of $\delta^{(d_2-1)(1-\vartheta_p)/2}$, where $\vartheta_p\in [0,1]$ satisfies
\[
1/p=(1-\vartheta_p)+\vartheta_p/\ST{d_2}. \qedhere
\]
\end{proof}

\begin{remark}
In the endpoint $p=\ST{d_2}$, we cannot expect any gain in $\delta$, which can be seen by a Knapp example: Suppose that $\smash{\hat f}$ is supported in a box of size $\sim\delta$ tangential to the $\delta$-cap and size $\sim \delta^2$ transversal to the $\delta$-cap. Then
\begin{align*}
\|f\|_{\ST{d_2}} & \sim \delta^{d_2+1} \delta^{-(d_2+1)/\ST{d_2}} \\
& = \delta^{(d_2+1)/\ST{d_2}'} = \delta^{\frac{d_2-1}{2}} \sim \|\hat f\|_{L^2(S^{d_2-1})}.
\end{align*}
\end{remark}

\begin{proof}[Proof of \cref{prop:p=ST}]
Suppose that $1\le p\le \ST{d_1,d_2}$. Let $f\in\mathcal S(G)$. Using \cref{eq:joint-calculus-ii} and the Plancherel theorem on $L^2(\g_2)$, we obtain
\begin{align}
\norm*{ F(L)\chi(2^\ell U) \zeta_j(\mathbf U) f  }^2_{L^2(G)}
& \sim \int_{\g_2^*} {\norm*{F(L^\mu)\chi(2^\ell |\mu|) \zeta_j(\mu) f^\mu}_{L^2(\g_1)}^2} \, d\mu \notag \\
& \lesssim \int_{|\mu|\sim 2^{-\ell}}  |\zeta_j(\mu)|^2\, {\norm*{F(L^\mu) f^\mu}_{L^2(\g_1)}^2} \, d\mu. \label{eq:proof-restriction-i}
\end{align}
Moreover, orthogonality on $L^2(\g_1)$ yields
\begin{align}
& \norm*{F(L^\mu) f^\mu}_{L^2(\g_1)}^2
 = \sum_{K=0}^\infty {\norm*{F|_{[K|\mu|,(K+1)|\mu|)}(L^\mu) f^\mu}_{L^2(\g_1)}^2} \notag \\
& \le \sum_{K=0}^\infty {\norm*{F|_{[K|\mu|,(K+1)|\mu|)}}_\infty^2}\,  \norm*{\mathbf{1}_{[K|\mu|,(K+1)|\mu|)}(L^\mu) f^\mu}_{L^2(\g_1)}^2. \label{eq:proof-restriction-ii}
\end{align}
We may assume that $K|\mu|\lesssim_A 1$ and $(K+1)|\mu|\gtrsim_A 1$ since $F|_{[K|\mu|,(K+1)|\mu|)}=0$ for $\left[K|\mu|,(K+1)|\mu|\right) \cap A = \emptyset$. Then, for $|\mu|\sim 2^{-\ell}$, we have $K\lesssim_A |\mu|^{-1}\sim 2^\ell$, and the spectral cluster estimate of \cref{prop:spectral-cluster} yields
\begin{align}
\norm*{\mathbf{1}_{[K|\mu|,(K+1)|\mu|)}(L^\mu) f^\mu}_{L^2(\g_1)}
 & \lesssim \left|\mu\right|^{\frac{d_1}2(\frac 1 p - \frac 1 2)}\left(K+1\right)^{\frac{d_1}2(\frac 1 p - \frac 1 2)-\frac 1 2} \left\| f^\mu \right\|_{L^p(\g_1)}  \notag \\
 & \lesssim \left|\mu\right|^{\frac 1 2} \left\| f^\mu \right\|_{L^p(\g_1)}. \label{eq:proof-restriction-iii}
\end{align}
Moreover, for $|\mu|\sim 2^{-\ell}$, we have
\[
|\mu| \sum_{K=0}^\infty {\norm*{F|_{[K|\mu|,(K+1)|\mu|)}}_\infty^2} \sim \norm*{F}_{2^{\ell},2}^2.
\]
Combining this with \cref{eq:proof-restriction-i}, \cref{eq:proof-restriction-ii} and \cref{eq:proof-restriction-iii}, we get
\begin{equation}\label{eq:proof-restriction-v}
\norm*{ F(L)\chi(2^\ell U) f }^2_{L^2(G)} \lesssim \norm*{F}_{2^{\ell},2}^2 \int_{|\mu|\sim 2^{-\ell}} |\zeta_j(\mu)|^2 \left\|f^\mu\right\|_{L^p(\g_1)}^2 d\mu .
\end{equation}
Since $2/p\ge 1$, Minkowski's integral inequality yields
\begin{align}
& \int_{|\mu|\sim 2^{-\ell}} |\zeta_j(\mu)|^2 \left\|f^\mu\right\|_{L^p(\g_1)}^2 d\mu \notag \\
& \le \bigg(\int_{\g_1}\bigg( \int_{|\mu|\sim 2^{-\ell}} |\zeta_j(\mu)|^2\, |f^\mu(x)|^2 \,d\mu\bigg)^{\frac p 2}\,dx \bigg)^{\frac 2 p}.\label{eq:proof-restriction-vi}
\end{align}
We write $f_{x}=f(x,\cdot)$. Let $\widehat\cdot$ denote the Fourier transform on $\g_2$. Recall that $\zeta_j|_{S^{d_2-1}}$ is supported on a cap $V_j$ of size $\delta$. Using polar coordinates and applying the restriction estimate from \cref{lem:caps}, we get
\begin{align}
\int_{|\mu|\sim 2^{-\ell}} |f^\mu(x)|^2 \,d\mu
 & = \int_{r\sim 2^{-\ell}} \int_{V_j} | \widehat{f_{x}}(r\omega)|^2 \, d\sigma(\omega) \, r^{d_2-1} \, dr\notag\\
 & = \int_{r\sim 2^{-\ell}} \int_{V_j} \big| \big(f_{x}(r^{-1}\,\cdot\,)\big)^\wedge(\omega)\big|^2 \, d\sigma(\omega)\, r^{-d_2-1} \, dr \notag \\
 & \lesssim \delta^{(d_2-1)(1-\vartheta_p)} \int_{r\sim 2^{-\ell}} \left\|f_{x}(r^{-1}\,\cdot\,)\right\|_{L^p(\g_2)}^2\,r^{-d_2-1} \, dr\notag \\
 & = \delta^{(d_2-1)(1-\vartheta_p)} \int_{r\sim 2^{-\ell}} r^{2d_2(\frac 1 p -\frac 1 2)-1}  \, dr \left\|f_{x}\right\|_{L^p(\g_2)}^2\notag \\
 & \sim \delta^{(d_2-1)(1-\vartheta_p)} \, 2^{-2\ell d_2(\frac 1 p -\frac 1 2)}  \left\|f_{x}\right\|_{L^p(\g_2)}^2.\notag 
\end{align}
In combination with \cref{eq:proof-restriction-v} and \eqref{eq:proof-restriction-vi}, we obtain
\[
\norm*{ F(L)\chi(2^\ell U) \zeta_j(\mathbf U) f}_{L^2(G)}
\lesssim_{A,\chi} \delta^{\frac{d_2-1} 2(1-\vartheta_p)} \, 2^{-\ell d_2(\frac 1 p -\frac 1 2)}\, \|F\|_{2^{\ell},2} \, \|f\|_{L^p(G)},
\]
which yields \cref{eq:restriction-type-i}.
\end{proof}

\section{Reduction of \texorpdfstring{\cref{thm:main}}{Theorem 1.1} to dyadic spectral multipliers}\label{sec:reduction}

\subsection{Dyadic spectral multipliers}

We reduce the proof of the statements (\ref{main-(1)}) and (\ref{main-(2)}) of \cref{thm:main} to the case of spectral multipliers whose Fourier transforms are supported on dyadic scales. To that end, let $\phi\in C_c^\infty(\R)$ be an even function supported in $[-2,-1]\cup[1 ,2]$. Define $\chi(\lambda)=\phi(\lambda)-\phi(2\lambda)$ and put $\chi_j(\lambda)=\chi(2^{-j}\lambda)$ for $j\in\N$. Additionally, let $\smash{\chi_{-1}=1-\sum_{j\ge 0}\chi_j}$. Given a suitable multiplier $F:\R\to\C$, we decompose it as
\[
F = F_{-1} + \sum_{\iota\ge 0} F^{(\iota)}, \quad \text{where } F^{(\iota)} := (\hat F \chi_\iota)^\vee,
\]
and where $\widehat\cdot$ and $\cdot^\vee$ denote the Fourier transform and its inverse on $\R$, respectively. Then, to prove \cref{thm:main}, it suffices to show \cref{eq:reduced} of the next theorem, which is Corollary 6.2 of \cite{Ni24}.

\begin{theorem}\label{thm:reduction}
Let $G$ be a two-step stratified Lie group and $L$ be a sub-Laplacian as in \cref{eq:sub-Laplacian}. Let $p_{*}\in[1,2]$ and $s>1/2$. Suppose that for all $1\le p\le p_{*}$ there exists some $\varepsilon>0$ such that
\begin{equation}\label{eq:reduced}
\| F^{(\iota)}(\sqrt L) \|_{p\to p} \le C_{p,s} 2^{-\varepsilon\iota} \|F\|_{L^2_s}
\quad \text{for all } \iota \in\N
\end{equation}
and all even bounded Borel functions $F\in L^2_s(\R)$ supported in $[-2,-\frac 1 2]\cup[\frac 1 2,2]$. Then the statements (\ref{main-(1)}) and (\ref{main-(2)}) of \cref{thm:main} hold for all $1\le p\le p_{*}$.
\end{theorem}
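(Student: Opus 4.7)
The plan is to derive \cref{thm:main} from the dyadic hypothesis via three ingredients: (a) the natural dilation structure on $G$, which reduces the analysis to multipliers supported in a single dyadic annulus; (b) summation of the geometric series in $\iota$ supplied by the hypothesis, producing a ``single-scale'' spectral multiplier estimate; and (c) a classical Calderón--Zygmund argument on the homogeneous-type space $G$ to extend to the full Mikhlin--Hörmander and Bochner--Riesz bounds.

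First I would use that $L$ is homogeneous of degree $2$ under the natural dilations of $G$, so that $\|F(\sqrt L)\|_{p\to p}$ is invariant under $F\mapsto F(t\,\cdot\,)$. Choosing an even bump $\eta\in C_c^\infty$ with $\sum_{j\in\Z}\eta(2^{-j}\lambda)=1$ for $\lambda\in\R\setminus\{0\}$, I would decompose $F=\sum_j F\eta(2^{-j}\cdot)$ and rescale each piece to $\tilde F_j(\lambda)=F(2^j\lambda)\eta(\lambda)$, supported in $[-2,-1/2]\cup[1/2,2]$ (after extending evenly, which is legitimate since the spectrum of $\sqrt L$ lies in $[0,\infty)$). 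Applying the dyadic Fourier-side decomposition $\tilde F_j=\tilde F_j^{(-1)}+\sum_{\iota\ge 0}\tilde F_j^{(\iota)}$ from \cref{sec:reduction}, the hypothesis \cref{eq:reduced} combined with the convergence of $\sum_\iota 2^{-\varepsilon\iota}$ yields a single-scale bound
\begin{equation*}
\|\tilde F_j(\sqrt L)\|_{p\to p} \le C_{p,s}\,\|\tilde F_j\|_{L^2_s}
\end{equation*}
for every $s>1/2$, the low-frequency remainder $\tilde F_j^{(-1)}$ being harmless since its Fourier transform is Schwartz. Choosing $s$ with $\max(1/2,d(1/p-1/2))<s$, the assumption $\|F\|_{L^2_{s,\sloc}}<\infty$ provides a uniform-in-$j$ bound of size $\|F\|_{L^2_{s,\sloc}}$ on each piece.

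The main obstacle lies in combining the pieces across spectral scales $j$, since operator-norm summation in $j$ is not available. For this I would invoke the classical Calderón--Zygmund machinery on the homogeneous-type space $G$, in the style of Christ \cite{Ch91} and Mauceri--Meda \cite{MaMe90}: combining the single-scale $L^p\to L^p$ bound with the Sobolev embedding \cref{eq:sobolev-embedding}, the finite propagation speed of $\cos(t\sqrt L)$, and a weighted Plancherel estimate on $G$, one deduces a Hörmander-type integrability estimate on the convolution kernel of $F(\sqrt L)$. Marcinkiewicz interpolation between the resulting weak-type $(1,1)$ bound and the trivial $L^2\to L^2$ bound then yields \cref{main-(1)} for all $p\in(1,p_*]$.

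For \cref{main-(2)}, I would apply \cref{main-(1)} to the Bochner--Riesz multiplier $F_t(\lambda)=(1-t\lambda^2)_+^\delta$. A standard computation gives $\|F_t\|_{L^2_{s,\sloc}}\lesssim 1$ uniformly in $t>0$ for every $s<\delta+1/2$; choosing $s$ with $d(1/p-1/2)<s<\delta+1/2$, which is possible precisely when $\delta>d(1/p-1/2)-1/2$, then delivers the claimed uniform boundedness.
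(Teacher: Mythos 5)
Your opening steps (dilation invariance of the $L^p$ operator norm, the even extension, summation of the geometric series in $\iota$, and disposing of the low-frequency piece $\tilde F_j^{(-1)}$) are fine, but they only recover the uniform single-scale bound $\|\tilde F_j(\sqrt L)\|_{p\to p}\lesssim\|F\|_{L^2_{s,\sloc}}$, and in passing to it you discard exactly the structure the reduction is designed to exploit: hypothesis \eqref{eq:reduced} gives decay $2^{-\varepsilon\iota}$ for pieces whose convolution kernels are, by finite propagation speed, supported in balls of radius $\sim 2^{\iota}$ (after rescaling), and it is this decay played against the growth of the kernel supports that permits the summation over the spectral scales $j$ at the fixed exponent $p$. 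Your substitute for that summation --- a Hörmander-type kernel condition for $F(\sqrt L)$, hence weak type $(1,1)$, then Marcinkiewicz interpolation with $L^2$ --- cannot work here. First, a Hörmander kernel condition needs roughly $s>d/2$ of regularity (indeed $s>Q/2$ on a general two-step group, which is the generality of \cref{thm:reduction}; the weighted Plancherel estimates needed to reach $d/2$ require \cref{assumptionA} and are not available), whereas in statement (\ref{main-(1)}) one only has $s>d(1/p-1/2)$, which for $p>1$ is strictly below $d/2$; a weak $(1,1)$ bound with constant controlled by $\|F\|_{L^2_{s,\sloc}}$ for some $s<d/2$ is in fact ruled out by the lower bounds of \cite{MaMueGo23} (interpolate it with $L^2$ and let $p\to 1$). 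Second, even granting weak $(1,1)$ for multipliers with $s>d/2$, interpolation for a fixed $F$ only yields $L^p$ boundedness under $s>d/2$; lowering the threshold to $d(1/p-1/2)$ would require interpolating the regularity parameter itself, and such an argument would produce the sharp $p$-specific theorem for every $1<p<2$ from the Hörmander theorem alone, bypassing the restriction-type input --- something unavailable even on $\R^d$ outside the Stein--Tomas range. This is the genuine gap. Note that the paper does not reprove the reduction but cites Corollary 6.2 of \cite{Ni24}; that proof (in the spirit of \cite{ChOu16,Ni22}) keeps the double decomposition in $(j,\iota)$ and performs the summation over $j$ directly at the exponent $p$ by a Calderón--Zygmund-type argument, balancing the factor $2^{-\varepsilon\iota}$ against the measure of the kernel supports.

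Your deduction of statement (\ref{main-(2)}) from statement (\ref{main-(1)}) also loses the endpoint $p=1$: statement (\ref{main-(1)}) is asserted only for $p>1$, while statement (\ref{main-(2)}) is claimed for all $1\le p\le p_*$, and $p=1$ is precisely the case that matters in the limiting situation $\bar d_1=d_2-1$ (see the discussion after \cref{thm:main} and \cref{ex:N32}). The Bochner--Riesz means should instead be treated directly from \eqref{eq:reduced}: by dilation invariance take $t=1$, split $(1-\lambda^2)_+^\delta$ into a smooth compactly supported piece near the origin (whose convolution kernel is Schwartz, e.g.\ by Hulanicki's theorem, hence bounded on $L^1(G)$) and an annulus-supported piece with finite $L^2_s$ norm for every $s<\delta+1/2$, and then sum \eqref{eq:reduced} over $\iota$; no summation over spectral scales is needed, so this works at $p=1$ as well, and it is here that your (correct) bound $\|F_t\|_{L^2_{s,\sloc}}\lesssim 1$ for $s<\delta+1/2$ enters.
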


\subsection{Finite propagation speed}

The convolution kernels of the dyadic pieces $\smash{F^{(\iota)}(\sqrt L)}$ in \cref{thm:reduction} are compactly supported with respect to a control distance, which we will now discuss. 

The Lie group $G \cong \mathbb{R}^{d_1} \times \mathbb{R}^{d_2}$ admits a natural control distance associated with the sub-elliptic operator $L$. This distance, which is the \textit{Carnot--Carathéodory distance} $d_{\mathrm{CC}}$, is defined for any two points $g, h \in G$ as the infimum of lengths of curves in $G$ connecting $g$ and $h$ that are horizontal with respect to the vector fields $X_1, \dots, X_{d_1}$ (see \cite[Section~III.4]{VaSaCo92}). The Chow-Rashevskii theorem \cite[Proposition~III.4.1]{VaSaCo92} ensures that $d_{CC}$ is a metric on $G$ inducing the Euclidean topology. Moreover, as shown in \cite[Lemma~1.1]{FoSt82} (see also \cite[Section~5]{Ni24}), the Carnot--Carathéodory distance satisfies
\begin{equation}\label{eq:equivalence-norms}
d_{\mathrm{CC}}(g, h) \sim \|g^{-1}h\|_G \quad \text{for all } g, h \in G,
\end{equation}
where $\| \cdot \|_G$ is the norm defined by $\|(x, u)\|_G := |x| + |u|^{1/2}$ for $(x, u) \in \mathbb{R}^{d_1} \times \mathbb{R}^{d_2}$.

For $(x, u) \in G$, let $B_R^{d_{\mathrm{CC}}}(x, u)$ denote the ball of radius $R > 0$ centered at $(x, u)$ with respect to the distance $d_{\mathrm{CC}}$. Note that $d_{\mathrm{CC}}$ is left-invariant with respect to the group multiplication, that is, $d_{\mathrm{CC}}(ag, ah) = d_{\mathrm{CC}}(g, h)$ for all $a, g, h \in G$.
Moreover, since the norm $\|\cdot\|_G$ is homogeneous with respect to the family of dilations $(\delta_R)_{R > 0}$ given by $\delta_R(x, u) = (Rx, R^2u)$, the volume of a ball $B_R^{d_{\mathrm{CC}}}(x, u)$ satisfies
\begin{equation}\label{eq:cc-volume}
|B_R^{d_{\mathrm{CC}}}(x, u)| = R^Q |B_1^{d_{\mathrm{CC}}}(0)| \sim R^Q,
\end{equation}
where $Q = d_1 + 2d_2$, and $0 \in G$ is the neutral element of the group multiplication under exponential coordinates. In particular, the metric space $(G, d_{\mathrm{CC}})$ equipped with the Lebesgue measure is a space of homogeneous type with homogeneous dimension $Q$. Moreover, by \eqref{eq:equivalence-norms}, there exists a constant $C > 0$ such that
\begin{equation}\label{eq:C}
B_R^{d_{\mathrm{CC}}} = 
B_R^{d_{\mathrm{CC}}}(0) \subseteq B_{CR}(0) \times B_{CR^2}(0) \subseteq \mathbb{R}^{d_1} \times \mathbb{R}^{d_2},
\end{equation}
where $B_{CR}(0)$ and $B_{CR^2}(0)$ are balls with respect to the Euclidean distance.

By \cite{Me84} (or alternatively \cite[Corollary 6.3]{Mue04}), any sub-Laplacian $L$ on a two-step stratified Lie group satisfies the \textit{finite propagation speed property} with respect to the Carnot--Carathéodory distance $d_{\mathrm{CC}}$ on $G$.

\begin{lemma}\label{lem:finite-prop}
If $f,g\in L^2(G)$ are supported in open subsets $U,V\subseteq G$, then
\[
\langle \cos(t\sqrt L)f,g\rangle_{L^2(G)} = 0\quad\text{for all } |t|< d_{\mathrm{CC}}(U,V),
\]
where $d_{\mathrm{CC}}(U,V) = \inf\{\rho(u,v):u\in U,v\in V \}$ is the distance of $U,V\subseteq G$.
\end{lemma}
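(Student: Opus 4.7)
The plan is to prove finite propagation speed via the standard energy method applied to the sub-elliptic wave equation $(\partial_t^2 + L)u = 0$. By the spectral theorem, $u(t,\cdot) := \cos(t\sqrt{L})f$ is the unique $L^2$-solution with Cauchy data $(f,0)$, so the conclusion $\langle \cos(t\sqrt L)f,g\rangle = 0$ for $|t| < d_{\mathrm{CC}}(U,V)$ is equivalent to the localization statement: if $\supp f \subseteq U$, then $\supp u(t,\cdot) \subseteq \{g \in G : d_{\mathrm{CC}}(g,U) \le |t|\}$ for all $t\in\R$.

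To prove this, I would fix a point $g_0 \in G$ and $t_0 > 0$ with $d_{\mathrm{CC}}(g_0, U) > t_0$, set $\rho(g) := d_{\mathrm{CC}}(g_0,g)$, and for $0 \le t < t_0$ consider the shrinking balls $\Omega(t) := \{g : \rho(g) < t_0 - t\}$ together with the local energy
\[
E(t) := \int_{\Omega(t)} \Big(|\partial_t u(t,g)|^2 + \sum_{j=1}^{d_1} |X_j u(t,g)|^2\Big) \, dg.
\]
Since $f$ vanishes on $\Omega(0)$ and $\partial_t u(0,\cdot) = 0$, one has $E(0) = 0$. Differentiating under the integral and using the wave equation together with the identity $\partial_t u \cdot \partial_t^2 u + \sum_j X_j u \cdot X_j \partial_t u = \sum_j X_j(\partial_t u \cdot X_j u)$ rewrites the bulk contribution as a horizontal divergence. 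After integration by parts and combining with the surface term from the shrinking domain, the derivative takes the form
\[
E'(t) = -\int_{\partial \Omega(t)} \Big(|\partial_t u|^2 + \sum_j |X_j u|^2 - 2 \sum_j (X_j \rho)\,\partial_t u\,X_j u\Big)\, d\sigma,
\]
which is non-positive by Cauchy--Schwarz, provided that $\sum_j |X_j \rho|^2 \le 1$ on $\partial\Omega(t)$. Consequently $E \equiv 0$ on $[0,t_0]$, forcing $u(t_0, \cdot)$ to vanish in a neighborhood of $g_0$, which is the desired support statement.

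The main obstacle is verifying the sub-Riemannian eikonal inequality $\sum_j |X_j \rho|^2 \le 1$ almost everywhere and making the above integration by parts rigorous, which is delicate because $\rho$ is only Lipschitz (not smooth) and the Carnot--Carath\'eodory geometry has a non-trivial cut locus. The inequality itself is built into the definition of $d_{\mathrm{CC}}$: along any horizontal curve $\gamma$ with $\sum_j |\dot\gamma_j|^2 \le 1$, the triangle inequality gives $|\tfrac{d}{ds}\rho(\gamma(s))| \le 1$, so at any point where $\rho$ is differentiable the horizontal gradient is pointwise bounded by $1$. To turn this into a rigorous argument, I would regularize $\rho$ by a sub-Riemannian sup/inf-convolution $\rho_\varepsilon$ satisfying $\sum_j |X_j \rho_\varepsilon|^2 \le 1 + o(1)$, run the energy computation with $\Omega(t)$ replaced by a sublevel set of $\rho_\varepsilon$, and pass to the limit $\varepsilon \to 0$. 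A standard density argument extending the result from smooth compactly supported data to general $L^2$ data completes the proof, which is essentially the strategy in \cite{Me84} and \cite[Corollary~6.3]{Mue04}.
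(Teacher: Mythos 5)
The paper does not actually prove this lemma: it is quoted as a known result, with a pointer to Melrose \cite{Me84} and to \cite[Corollary~6.3]{Mue04}, so there is no internal argument to compare against. Your energy-method sketch is the standard proof lying behind those citations, and its outline is correct: reducing the claim to a support statement for $u(t,\cdot)=\cos(t\sqrt L)f$, monitoring the horizontal energy on sublevel sets of $\rho=d_{\mathrm{CC}}(g_0,\cdot)$, and obtaining $E'(t)\le 0$ from Cauchy--Schwarz once the horizontal eikonal bound $\sum_j|X_j\rho|^2\le 1$ is available. Two points deserve emphasis, both of which you flag but which are exactly where the work lies. First, $\rho$ is Lipschitz only with respect to $d_{\mathrm{CC}}$; in Euclidean terms it is merely H\"older of order $1/2$ in the central variables (compare $\|(x,u)\|_G=|x|+|u|^{1/2}$), so classical Rademacher does not directly furnish a.e.\ differentiability, and one genuinely needs Pansu-type differentiability or a smooth approximation $\rho_\varepsilon$ with $\sum_j|X_j\rho_\varepsilon|^2\le 1+o(1)$; your sup/inf-convolution proposal must be checked to preserve this horizontal gradient bound. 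Second, the boundary term in your displayed $E'(t)$ should carry the coarea Jacobian $1/|\nabla\rho|$ (and the differentiation of an integral over sublevel sets of a nonsmooth function needs justification); this does not affect the sign argument but matters for rigor. A cleaner rigorous route, if you want to avoid the boundary bookkeeping altogether, is to test the wave equation against Lipschitz space weights such as $e^{-\lambda\rho}$, or to invoke the known equivalence of finite propagation speed with Davies--Gaffney estimates, which only requires the a.e.\ bound $\sum_j|X_j\rho|^2\le 1$ in an integrated form. With those technical steps supplied (plus the routine density argument for data in the domain of $L$), your proof is complete and is essentially the argument of the references the paper cites.
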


Let $F : \mathbb{R} \to \mathbb{C}$ be an even bounded Borel function. Since $\chi_\iota$ is also even, the Fourier inversion formula yields  
\[
F^{(\iota)}(\sqrt{L}) = \frac{1}{2\pi} \int_{2^{\iota-1} \leq |\tau| \leq 2^{\iota+1}} \chi_\iota(\tau) \hat{F}(\tau) \cos(\tau \sqrt{L}) \, d\tau.
\]
Given the finite propagation speed property of $L$, if $f \in L^2(G)$ is supported in the ball $B_R^{d_{\mathrm{CC}}}(x, u)$ with $R = 2^{\iota}$, then the support of $F^{(\iota)}(\sqrt{L})f$ is contained within the larger ball $B_{3R}^{d_{\mathrm{CC}}}(x, u)$. Consequently, if $\mathcal{K}^{(\iota)}$ denotes the convolution kernel of the operator $F^{(\iota)}(\sqrt{L})$, that is, $F^{(\iota)}(\sqrt{L}) f = f * \mathcal{K}^{(\iota)}$, then $\mathcal{K}^{(\iota)}$ is supported in the ball $B_{2R}^{d_{\mathrm{CC}}}$ centered at the origin. The volume of this ball scales like $R^{d_1+2d_2}$.

\section{A localization on the first layer}\label{sec:1-layer}

As before, let $L=-(X_1^2+\dots+X_{d_1}^2)$ be a sub-Laplacian on a two-step stratified Lie group~$G$. For the results of this section, we do not require Assumptions \ref{assumptionA} and \ref{assumptionB}. Instead, we only require that 
\[
\mu\mapsto\dim \ker J_\mu \text{ is constant on } \g_2^*\setminus\{0\},
\]
where $J_\mu$ are the matrices from Section \ref{sec:basics}. Recall that, by Lemma \ref{lem:proj-radical}, this implies that there is a constant $\kappa>0$ such that
\begin{equation}\label{eq:proj-radical-ii}
\smash{\|P_0^\mu - P_0^{\mu'}\| \le \kappa\, |\mu-\mu'| \quad\text{for all } \mu,\mu' \in \g_2^*\setminus\{0\}},
\end{equation}
where $P_0^\mu$ is the orthogonal projection onto $\ker J_\mu$.
We may assume that $\kappa>1$.

\subsection{Convolution kernels}

Let $F:\mathbb{R} \rightarrow \mathbb{C}$ be an even bounded Borel function. Let $F^{(\iota)}$ be as in Section~\ref{sec:reduction}. Recall from Section~\ref{sec:restriction} that the operator $F^{(\iota)}(\sqrt L)$ can be decomposed as
\[
F^{(\iota)}(\sqrt L) = \sum_{\ell \geq -\ell_0} \sum_{j \in I} F^{(\iota)}(\sqrt L) \chi(2^{\ell} U) \zeta_j(\mathbf{U}).
\]
Here, $\mathbf{U} = (-iU_1, \dots, -iU_{d_2})$ and $U = |\mathbf{U}|^{1/2}$, where $U_1, \dots, U_{d_2}$ forms a basis for the second layer $\mathfrak{g}_2$. Moreover, $\chi \in C_c^{\infty}(\mathbb{R}^+)$ and $(\chi(2^{\ell} \cdot))_{\ell\in\Z}$ forms a dyadic partition of unity of the real axis. The constant $\ell_0 \in \mathbb{N}$ is given by Lemma~\ref{lem:Plancherel}. The functions $\zeta_j: \mathfrak{g}_2^* \setminus \{0\} \rightarrow \mathbb{R}$ are homogeneous of degree 0 and localize to caps $V_j$ of size $\delta$ on the unit sphere $S^{d_2-1} \subseteq \mathfrak{g}_2^*$, where they form a partition of unity.

The operators $F^{(\iota)}(\sqrt L) \chi(2^{\ell} U) \zeta_j(\mathbf{U})$ possess a convolution kernel which can be  written down in terms of the Fourier transform on the second layer and Laguerre polynomials. We use the notation of \cref{prop:J_mu-spectral}. Recall that \cref{prop:J_mu-spectral} provides the spectral decomposition
\[
-J_\mu^2 = \sum_{n=1}^N (b_n^\mu)^2 P_n^\mu
\]
for all $\mu$ in a Zariski open subset $\g_{2,r}^*\subseteq\g_2^*$, where $P_n^\mu$ is an orthogonal projection of rank $2r_n$. Furthermore, for $\lambda>0$ and $m\in\N\setminus\{0\}$, let $\smash{\varphi_k^{(\lambda,m)}}$ be the $\lambda$-rescaled Laguerre function given by
\begin{equation}\label{M-eq:Laguerre}
\varphi_k^{(\lambda,m)}(z) = \lambda^m L_k^{m-1}\big(\tfrac 1 2 \lambda |z|^2\big) \,e^{-\frac 1 4 \lambda |z|^2},\quad z\in\R^{2m},
\end{equation}
where $L_k^{m-1}$ is the $k$-th Laguerre polynomial of type $m-1$. Since $P_n^\mu$ projects onto a subspace of dimension $2r_n$, with a slight abuse of notation, we also write
\[
\varphi_k^{(\lambda,r_n)}(P_n^\mu x) = \lambda^m L_k^{m-1}\big(\tfrac 1 2 \lambda |P_n^\mu x|^2\big) \,e^{-\frac 1 4 \lambda |P_n^\mu x|^2}.
\]

\begin{lemma}\label{lem:conv-kernel}
If $F\in \mathcal S(\R)$, then the operator $F(L) \chi(2^{\ell} U) \zeta_j(\mathbf{U})$ possesses a convolution kernel $\mathcal K_{\ell,j}\in \mathcal S(G)$, which is given by
\begin{multline}\label{eq:conv-kernel}
\mathcal K_{\ell,j}(x,u) = (2\pi)^{-r_0-d_2} \int_{\g_{2,r}^*} \int_{\ker J_\mu} \sum_{\mathbf{k}\in\N^N} F(|\xi|^2+\eigvp{k}{\mu})\, \chi(2^\ell|\mu|) \, \zeta_j(\mu) \\
 \times \bigg[\prod_{n=1}^N \varphi_{k_n}^{(b_n^\mu,r_n)}(P_n^\mu x)\bigg] e^{i\langle \xi,  P_0^\mu x\rangle} \, e^{i\langle \mu, u\rangle} \, d\xi  \, d\mu,
\end{multline}
where
\[
\eigvp{k}{\mu} = \sum_{n=1}^N \left(2k_n+r_n\right)b_n^\mu ,\quad \mathbf{k}=(k_1,\dots,k_N)\in \N^N.
\]
\end{lemma}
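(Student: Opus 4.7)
The plan is to compute the kernel by combining the partial Fourier transform along the center with the spectral decomposition of the $\mu$-twisted Laplacian. By \cref{prop:joint-calculus}, applying $\mathcal F_2$ from \eqref{eq:partial-FT} reduces the operator, $\mu$-section by $\mu$-section, to
\[
\big(F(L)\chi(2^\ell U)\zeta_j(\mathbf U) f\big)^\mu = F(L^\mu)\,\chi(2^\ell |\mu|)\,\zeta_j(\mu)\, f^\mu.
\]
Since group convolution on $G$ corresponds under $\mathcal F_2$ to $\mu$-twisted convolution on $\g_1$, denoting by $K^\mu(x)$ the twisted-convolution kernel of $F(L^\mu)$ (when one exists in $\mathcal S(\g_1)$), we have
\[
\mathcal K_{\ell,j}(x,u) = (2\pi)^{-d_2} \int_{\g_2^*} \chi(2^\ell|\mu|)\,\zeta_j(\mu)\, K^\mu(x)\, e^{i\langle\mu,u\rangle}\,d\mu,
\]
so that the problem reduces to identifying $K^\mu$ explicitly on the Zariski-open set $\g_{2,r}^*$ (the complement being Lebesgue-null and hence irrelevant for the $\mu$-integration).

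Next, I would diagonalize $L^\mu$ using \cref{prop:J_mu-spectral}. Writing the orthogonal splitting $\g_1=\ker J_\mu\oplus \bigoplus_{n=1}^N\ran P_n^\mu$, the fact that each $P_n^\mu$ commutes with $J_\mu$ (and hence with both $|J_\mu x|^2$ and the vector field $(J_\mu x)^\intercal\nabla$) means that $L^\mu$ respects this decomposition and splits as a sum of commuting pieces. On $\ker J_\mu$ the operator reduces to the Euclidean Laplacian $-\Delta$, with a continuous spectrum parametrised by $\xi\in\ker J_\mu$ and generalized eigenfunctions $e^{i\langle\xi,P_0^\mu x\rangle}$. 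On each $\ran P_n^\mu$, choosing a symplectic basis that diagonalizes $-J_\mu^2|_{\ran P_n^\mu}=(b_n^\mu)^2\id$, the piece coincides with a scaled twisted Laplacian on $\R^{2r_n}$ with frequency $b_n^\mu$; by the classical Laguerre calculus (as used in, e.g., \cite{MaMue14N}) its spectrum is $\{(2k_n+r_n)b_n^\mu: k_n\in\N\}$ with the rescaled Laguerre functions $\varphi_{k_n}^{(b_n^\mu,r_n)}$ from \eqref{M-eq:Laguerre} as eigenfunctions. Assembling the pieces, the spectrum of $L^\mu$ is $\{|\xi|^2+\eigvp{k}{\mu}:(\xi,\mathbf k)\in\ker J_\mu\times\N^N\}$, and Fourier inversion on $\ker J_\mu\cong\R^{r_0}$ yields
\[
K^\mu(x)=(2\pi)^{-r_0}\int_{\ker J_\mu}\sum_{\mathbf k\in\N^N} F(|\xi|^2+\eigvp{k}{\mu})\,\bigg[\prod_{n=1}^N \varphi_{k_n}^{(b_n^\mu,r_n)}(P_n^\mu x)\bigg]\,e^{i\langle\xi,P_0^\mu x\rangle}\,d\xi.
\]
Plugging this into the preceding display gives exactly \eqref{eq:conv-kernel}, with the combined constant $(2\pi)^{-r_0-d_2}$.

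Finally, I would justify that $\mathcal K_{\ell,j}\in\mathcal S(G)$: the $\mu$-integral is confined to a compact shell $|\mu|\sim 2^{-\ell}$ by $\chi(2^\ell|\cdot|)$, the $\xi$-integral is tempered thanks to the Schwartz decay of $F$ on $[0,\infty)$, and the Laguerre factors decay in $x$ uniformly on this shell; smoothness in $(x,u)$ follows by differentiation under the integral sign using the real-analyticity of $\mu\mapsto b_n^\mu,P_n^\mu$ on $\g_{2,r}^*$.

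The main obstacle is the spectral decomposition of $L^\mu$ on each block: verifying that $L^\mu$ genuinely commutes with each $P_n^\mu$ (so the pieces decouple as claimed) and that on $\ran P_n^\mu$ the reduced operator is unitarily equivalent, via a symplectic change of basis inside $\ran P_n^\mu$, to the standard scaled twisted Laplacian on $\R^{2r_n}$ whose diagonalization in Laguerre functions is classical. This is the point where \cref{prop:J_mu-spectral} is really used; everything else is bookkeeping of constants and an appeal to Fourier inversion.
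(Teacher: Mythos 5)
Your argument is correct in outline, but it takes a different route from the paper: the paper proves \cref{lem:conv-kernel} by a one-line citation to \cite[Proposition 3.4]{Ni25R}, whereas you reconstruct the derivation behind that result (partial Fourier transform via \cref{prop:joint-calculus}, block decomposition of $L^\mu$ from \cref{prop:J_mu-spectral}, Laguerre calculus on each block, Fourier inversion on $\ker J_\mu$). This is indeed how such kernel formulas are obtained in the literature, so your proposal buys self-containedness at the cost of redoing the bookkeeping that the cited proposition packages.

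Two points in your sketch are thinner than they should be. First, on each block the relevant fact is not merely that the radial Laguerre functions $\varphi_{k_n}^{(b_n^\mu,r_n)}$ are eigenfunctions of the scaled special Hermite operator (its eigenspaces are infinite-dimensional), but the special Hermite/Laguerre expansion of \cite{Th93}: twisted convolution against $\varphi_{k_n}^{(b_n^\mu,r_n)}$, with the appropriate normalizing power of $2\pi$, \emph{is} the spectral projection onto the eigenvalue $(2k_n+r_n)b_n^\mu$. You gesture at this via ``the classical Laguerre calculus'', but your constant ``$(2\pi)^{-r_0-d_2}$ from Fourier inversion alone'' ignores the per-block normalization of that expansion, so the constant should be tracked rather than asserted (it is immaterial for the later estimates, but the formula as stated claims a specific value). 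Second, the claim $\mathcal K_{\ell,j}\in\mathcal S(G)$ is not justified by ``differentiation under the integral sign using real-analyticity on $\g_{2,r}^*$'': the cutoff $\chi(2^\ell|\mu|)\zeta_j(\mu)$ does \emph{not} avoid the Zariski-closed set $\g_2^*\setminus\g_{2,r}^*$, where the individual functions $b_n^\mu$ may collide and the projections $P_n^\mu$ may fail to be continuous (recall that the standing hypothesis in \cref{sec:1-layer} is only that $\dim\ker J_\mu$ is constant, not \cref{assumptionA}). Rapid decay in $u$ requires integrating by parts in $\mu$, and smoothness of the integrand across that exceptional set needs an argument (e.g.\ that the symmetrized sum over $\mathbf k$ extends smoothly, or an operator-level argument as in \cite{Ni25R} or \cite{MaMue14N}); this is precisely what the cited proposition supplies. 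Apart from these two gaps, the reduction to $K^\mu$, the splitting of $L^\mu$ along $\ker J_\mu\oplus\bigoplus_n\ran P_n^\mu$, and the assembly of the joint spectral decomposition are sound.
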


\begin{proof}
This follows from \cite[Proposition 3.4]{Ni25R}.
\end{proof}

\subsection{The role of the decomposition into caps}

The heuristic idea behind the following \cref{prop:1st-layer-Plancherel} is as follows: As outlined in \cref{sec:reduction}, the convolution kernel of the operator $\smash{F^{(\iota)}(\sqrt L)}$ is supported within a ball of size $R \times R^2$, up to constants, where $R = 2^\iota$. Suppose we replace $F$ in \cref{eq:conv-kernel} with the function $\smash{F^{(\iota)}(\sqrt{\cdot\,})}$. Let $\smash{\mathcal{K}^{(\iota)}_{\ell,j}}$ be the convolution kernel of the operator $\smash{F^{(\iota)}(\sqrt{L}) \chi(2^{\ell} U) \zeta_j(\mathbf{U})}$.

Note that $b_n^\mu > 0$ for all $\mu \in \g_2^* \setminus \{0\}$, which follows from our assumption that $\mu \mapsto \dim \ker J_\mu$ is constant on $\g_2^* \setminus \{0\}$. Furthermore, since $\mu \mapsto b_n^\mu$ is homogeneous of degree 1, we have $b_n^\mu \sim |\mu|$. The estimates from \cite[Equations~(1.1.44), (1.3.41), Lemma~1.5.3]{Th93} suggest that the function $\smash{\varphi_{k_n}^{(b_n^\mu, r_n)}}$ in \cref{eq:conv-kernel} is essentially supported in a ball of radius comparable to
\[
(b_n^\mu)^{-1/2}(k_n+1)^{1/2} \sim |\mu|^{-1/2}(k_n+1)^{1/2} \lesssim 2^\ell,
\]
centered at the origin, up to an exponentially decaying term.

Let $R_\ell = 2^\ell$. We distinguish between the cases $-\ell_0 \le \ell \le \iota$, where $R_\ell \le R$, and the case $\ell > \iota$. We consider the case where $-\ell_0 \le \ell \le \iota$. The observation above suggests that we can think of the partial Fourier transform $\smash{(\mathcal{K}^{(\iota)}_{\ell,j})^\mu}$ (as defined in \cref{eq:partial-FT}) as being supported in the set of all $x \in \g_1$ such that
\begin{equation}\label{eq:heuristic-1}
|P_0^\mu x| \lesssim R \quad \text{and} \quad |\bar P^\mu x| \lesssim R_\ell.
\end{equation}
Here, $P_0^\mu x$ is the orthogonal projection of $x$ onto $\ker J_\mu$, and $\bar P^\mu x = x - P_0^\mu x$ is the projection onto the orthogonal complement $(\ker J_\mu)^\perp$. However, this information about the support depends on $\mu$. For instance, if $G$ is the group $N_{3,2}$, then $\ker J_\mu$ is the one-dimensional subspace spanned by the vector $\mu$, see \cref{ex:N32}.

This is where the additional decomposition provided by the functions $\zeta_j$ comes into play. We fix $\mu^j \in S^{d_2-1}$, where $\mu^j \in \zeta_j$ (thought of as the center of the cap). If $\mu \in \supp \zeta_j$, we have
\[
|\bar \mu - \mu^j| < \delta, \quad \text{where} \quad \bar \mu = |\mu|^{-1} \mu.
\]
Thus, using \cref{eq:proj-radical-ii}, if we choose $\delta$ sufficiently small, then \cref{eq:heuristic-1} implies
\[
|P_0^{\mu^j} x| \lesssim R \quad \text{and} \quad |\bar P^{\mu^j} x| \lesssim R_\ell\quad\text{for all }(x, u) \in \supp \mathcal{K}^{(\iota)}_{\ell,j}.
\]
Hence, $\smash{\mathcal{K}^{(\iota)}_{\ell,j}}$ is essentially supported in a ball with volume comparable to $\smash{R^{r_0} R_\ell^{\bar{d}_1} R^{d_2}}$. This idea will be made precise in \cref{prop:rapid-decay-1st-layer}.

Let $C > 0$ be the constant from \cref{eq:C}, that is,
\[
B_R^{d_{\mathrm{CC}}} = B_R^{d_{\mathrm{CC}}}(0) \subseteq B_{CR} \times B_{CR^2} \subseteq \mathbb{R}^{d_1} \times \mathbb{R}^{d_2}.
\]
For any $j\in I$, we fix $\mu^j\in S^{d_2-1}$ with $\mu^j\in\supp\chi_j$. As before, given $x \in \g_1$, let $\smash{P_0^{\mu^j} \! x}$ be the orthogonal projection of $x$ onto $\ker J_{\mu^j}$ and $\smash{\bar P^{\mu^j} \! x=x-P_0^{\mu^j}\! x}$ be the projection onto its orthogonal complement.

\begin{proposition}[Localization on the first layer]\label{prop:rapid-decay-1st-layer}
Let $1 \leq p < 2$. Consider radii $R = 2^\iota$ and $R_\ell = 2^\ell$ for $\iota \in \mathbb{N}$ and $\ell \in \{-\ell_0, \dots, \iota\}$. Let $A \subseteq \mathbb{R}^+$ be a compact subset. Suppose $F \in C_c^\infty(\mathbb{R}^+)$ is supported in $A$, and let $\chi$ and $\zeta_j$ be as before, where we choose $\delta=R/R_\ell$ for the decomposition into caps $V_j$ of size $\delta$. Then, for any $\gamma > 0$, if $x^0 \in \mathfrak{g}_1$ with $|x^0| < CR$ and $f \in L^p(G) \cap L^2(G)$ is supported in
\[
B^{(\ell)}(x^0, \mu^j) = \big\{ (x, u) \in B_{CR} \times B_{CR^2} : |\bar{P}^{\mu^j} (x - x^0)| < CR_\ell \big\},
\]
the $L^p$-norm of the function
\[
g_{\iota, \ell, j} = \mathbf{1}_{B_{3R}^{d_{\mathrm{CC}}}} F(L) \chi(2^\ell U) \zeta_j(\mathbf{U}) f
\]
is rapidly decaying in $R$ outside the ball
\[
\tilde{B}^{(\ell)}(x^0, \mu^j) = \big\{ (x, u) \in B_{3CR} \times B_{9CR^2} : |\bar{P}^{\mu^j} (x - x^0)| < 6\kappa C R_\ell R^\gamma \big\}.
\]
That is, for any $N\in\N$, there is some $C_{N,\gamma}>0$ independent of $x^0$ such that
\begin{equation}\label{eq:rapid-decay-1}
\big\| \, g_{\iota,\ell,j}\big|_{\g\setminus \tilde B^{(\ell)}(x^0,\mu^{j})} \big\|_p \le C_{N,\gamma} R^{-\gamma N} \|F\|_\infty \|f\|_p.
\end{equation}
\end{proposition}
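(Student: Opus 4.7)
The plan is to exploit the explicit kernel formula of \cref{lem:conv-kernel}, in which the dependence on $x$ factorizes through the projections $P_n^\mu x$, combined with rapid decay of the Laguerre functions $\varphi_{k_n}^{(b_n^\mu,r_n)}$ beyond their Planck-scale support. The essential localization forced by these Laguerre factors is $|\bar P^\mu x| \lesssim R_\ell$, where $\bar P^\mu = \id_{\g_1} - P_0^\mu$ depends on the integration variable $\mu$. The purpose of the cap decomposition with $\delta \sim R_\ell/R$ is that, by the Lipschitz estimate \cref{eq:proj-radical-ii}, on each cap $V_j$ one can replace $\bar P^\mu$ by the fixed $\bar P^{\mu^j}$ at the cost of an additive error of order $\kappa\delta R \sim R_\ell$ in the localization region, which is absorbed into the $R^\gamma$ room built into $\tilde B^{(\ell)}(x^0,\mu^j)$.

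Concretely, for $(x,u) \in B_{3R}^{d_{\mathrm{CC}}} \setminus \tilde B^{(\ell)}(x^0,\mu^j)$ and $(y,v) \in \supp f \subseteq B^{(\ell)}(x^0,\mu^j)$, the Euclidean parts of the definition of $\tilde B^{(\ell)}$ are automatic from \cref{eq:C}, so the failing constraint must be $|\bar P^{\mu^j}(x-x^0)| \geq 6\kappa CR_\ell R^\gamma$. Writing $x' = x - y$ and using $|\bar P^{\mu^j}(y - x^0)| < CR_\ell$, one obtains $|\bar P^{\mu^j} x'| > 5\kappa CR_\ell R^\gamma$. Then \cref{eq:proj-radical-ii}, combined with $|x'| \leq 4CR$ and the degree-zero homogeneity of $\mu \mapsto P_0^\mu$, yields $|\bar P^\mu x'| \gtrsim R_\ell R^\gamma$ for every $\mu \in \supp\zeta_j$. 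Since $\bar P^\mu = P_1^\mu+\dots+P_N^\mu$ with pairwise orthogonal ranges, at least one $|P_{n}^\mu x'| \gtrsim R_\ell R^\gamma$. Plugging this into \cref{eq:conv-kernel} and using that $\chi(2^\ell|\mu|)$ forces $|\mu| \sim R_\ell^{-1}$ (hence $b_n^\mu \sim R_\ell^{-1}$) while $\supp F \subseteq A$ forces $k_n+1 \lesssim R_\ell$, the Gaussian factor of the corresponding $\varphi_{k_n}^{(b_n^\mu,r_n)}$ is bounded by $e^{-c R_\ell R^{2\gamma}} \lesssim_M R^{-\gamma M}$ for any $M \in \N$. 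Coupled with repeated integration by parts in the oscillatory factors $e^{i\langle\mu,u\rangle}$ and $e^{i\langle\xi, P_0^\mu x\rangle}$ (to produce Schwartz-type decay in $u$ and in $P_0^\mu x$), one obtains the pointwise bound
\[
|\mathcal K_{\ell,j}((y,v)^{-1}(x,u))| \lesssim_M R^{-\gamma M}\|F\|_\infty \, \mathcal G((y,v)^{-1}(x,u))
\]
for some nonnegative $\mathcal G \in L^1(G)$ whose $L^1$-norm is uniformly bounded in $R$ and $R_\ell$. Young's convolution inequality on $G$ then gives $|g_{\iota,\ell,j}(x,u)| \lesssim R^{-\gamma M}\|F\|_\infty (|f| * \mathcal G)(x,u)$ and yields \cref{eq:rapid-decay-1} with $N = M$.

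The delicate part of the argument is the construction of the majorant $\mathcal G$: one has to show that the super-polynomial gain from the Laguerre Gaussian dominates the sum over $\mathbf k \in \N^N$ (polynomially many non-vanishing terms), the $\xi$-integral over $\ker J_\mu$ (supported where $|\xi|\lesssim 1$), and the $\mu$-integral (over a region of volume $\sim \delta^{d_2-1}R_\ell^{-d_2}$), uniformly in the parameters. Since the Laguerre gain is exponential in $R^{2\gamma}$, this absorption of polynomial factors is harmless, but the integration by parts in $\mu$ required for decay in $u$ has to be carried out away from the singular set $\g_2^* \setminus \g_{2,r}^*$ — a subtlety that does not actually affect the present argument because the derivatives are needed only on the cap, which stays in $\g_{2,r}^*$ for $\delta$ small. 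The one piece of structural input the scheme truly depends on is \cref{eq:proj-radical-ii}; without Lipschitz continuity of $\mu \mapsto P_0^\mu$ there would be no way to replace the $\mu$-dependent splitting of $\g_1$ by the fixed splitting adapted to $\mu^j$.
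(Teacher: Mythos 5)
Your overall strategy (pointwise kernel bounds from the explicit formula \cref{eq:conv-kernel}, far-field decay of the Laguerre factors, transfer from $\bar P^{\mu}$ to $\bar P^{\mu^j}$ via \cref{eq:proj-radical-ii}) is genuinely different from the paper, which never estimates the kernel pointwise: it interpolates a trivial $L^2$ bound with an $L^1$ bound obtained from Cauchy--Schwarz on $B_{3R}^{d_{\mathrm{CC}}}$ plus the weighted Plancherel estimate of \cref{prop:1st-layer-Plancherel}, and that lemma is proved entirely in $L^2$ by inserting the weight $|\bar P^\mu x|^{2N}$ and invoking the sub-elliptic Hermite estimate \cref{eq:sub-elliptic} on the Laguerre eigenfunctions. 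The point of that design is that no regularity of $\mu\mapsto b_n^\mu$, $\mu\mapsto P_n^\mu$ or of $F$ beyond boundedness is ever used. Your geometric reduction (from $(x,u)\notin\tilde B^{(\ell)}(x^0,\mu^j)$ and $\supp f\subseteq B^{(\ell)}(x^0,\mu^j)$ to $|\bar P^{\mu}(x-y)|\gtrsim R_\ell R^\gamma$ for all $\mu\in\supp\zeta_j$, using $\delta\sim R_\ell/R$ and \cref{eq:proj-radical-ii}) matches the paper exactly and is fine.

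The gap is in the step where you build the integrable majorant $\mathcal G$ by ``repeated integration by parts in the oscillatory factors $e^{i\langle\mu,u\rangle}$''. First, your justification that ``the derivatives are needed only on the cap, which stays in $\g_{2,r}^*$ for $\delta$ small'' is false: the caps $(V_j)_{j\in I}$ form a cover of the whole sphere $S^{d_2-1}$, so whenever $\g_2^*\setminus\g_{2,r}^*$ is nonempty some caps meet it, and the proposition must hold for those $j$ as well; in the generality of \cref{sec:1-layer} (only $\dim\ker J_\mu$ constant, no \cref{assumptionA}) the functions $b_n^\mu$ and $P_n^\mu$ may be singular there, so differentiating the integrand in $\mu$ is not legitimate. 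Second, even where it is legitimate, each $\mu$-derivative hits $F(|\xi|^2+\eigvp{k}{\mu})$ and $\zeta_j$, producing $\|F'\|_\infty$ (and factors $\sim k_n$ and $\delta^{-1}$), whereas the claimed bound \cref{eq:rapid-decay-1} — and its later application to $F^{(\iota)}$, whose derivatives grow like $2^\iota$ — only allows $\|F\|_\infty$. Third, this step is unnecessary: $g_{\iota,\ell,j}$ carries the cutoff $\mathbf 1_{B_{3R}^{d_{\mathrm{CC}}}}$, whose measure is $\sim R^Q$, so a pointwise bound $|\mathcal K_{\ell,j}|\lesssim_M R^{-\gamma M}\|F\|_\infty$ on the relevant set, combined with crude $L^1$/H\"older estimates, already yields \cref{eq:rapid-decay-1} after choosing $M$ large, no decay in $u$ needed. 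Finally, note that the pointwise decay itself is not just ``the Gaussian factor'': $L_{k}^{r_n-1}$ grows, and you need the far-field (beyond-turning-point) Laguerre estimates, e.g.\ the bounds from \cite{Th93} cited in \cref{sec:1-layer}, valid here because $\tfrac12 b_n^\mu|P_n^\mu x'|^2\gtrsim R_\ell R^{2\gamma}\gg k_n+r_n$ for large $R$; with that reference and with the $\mu$-integration-by-parts step removed, your argument can be repaired into a correct alternative proof.
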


Note that \cref{eq:rapid-decay-1} is formulated for a general multiplier $F$, rather than specifically for $F^{(\iota)}(\sqrt{\,\cdot\,})$. However, the function $g_{\iota, \ell, j}$ is inherently localized by definition to the ball $\smash{B_{3R}^{d_{\mathrm{CC}}}}$ via the cutoff function $\mathbf{1}_{B_{3R}^{d_{\mathrm{CC}}}}$.

The proof of \cref{prop:rapid-decay-1st-layer} relies on the following Plancherel type estimate:

\begin{lemma}\label{prop:1st-layer-Plancherel}
Let $\mathcal{K}_{\ell,j}$ be the convolution kernel of $F(L) \chi(2^\ell U) \zeta_j(\mathbf{U})$. Then
\begin{equation}\label{eq:first-layer-1}
\int_G \mathbf{1}_{E_\ell(\mu^j)}(x) \left|\mathcal{K}_{\ell,j}(x, u)\right|^2  d(x, u) \leq C_{N} \big( R^{-\gamma N} \|F\|_\infty \big)^2,
\end{equation}
where $E_\ell(\mu^j)$ is the set of all $x \in \mathfrak{g}_1$ satisfying
\[
|x| < 4CR \quad \text{and} \quad |\bar{P}^{\mu^j} x| \geq 5\kappa C R_\ell R^\gamma.
\]
\end{lemma}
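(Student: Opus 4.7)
The plan is to revisit the proof of \cref{lem:Plancherel}, inserting a weight $|P_n^\mu x|^{2M}$ for some large $M\in\N$ and exploiting the rapid decay of the Laguerre functions $\varphi_{k_n}^{(b_n^\mu,r_n)}$ to convert this weight into a gain of $R^{-2M\gamma}$. By Plancherel's theorem in the second-layer variable $u$, the left-hand side of \cref{eq:first-layer-1} equals, up to a constant,
\[
\int_{\g_1} \mathbf 1_{E_\ell(\mu^j)}(x) \int_{\g_2^*} |(\mathcal K_{\ell,j})^\mu(x)|^2 \,d\mu\,dx,
\]
and the explicit formula for $(\mathcal K_{\ell,j})^\mu$ provided by \cref{lem:conv-kernel}, combined with the orthogonality of the Laguerre functions on each $\ran P_n^\mu\cong\R^{2r_n}$, reduces, as in \cite[Corollary~8]{MaMue14N} and \cref{eq:rest-interp-1}, the $L^2_x$-integration to a sum over $\mathbf k\in\N^N$ of products of $L^2$-norms of the Laguerre functions. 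Our goal is the analogous weighted identity in which the cutoff $\mathbf 1_{E_\ell(\mu^j)}(x)$ is absorbed into these norms.

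The geometric step is the pointwise bound $\mathbf 1_{E_\ell(\mu^j)}(x)\lesssim_N (R_\ell R^\gamma)^{-2M}\sum_{n=1}^N |P_n^\mu x|^{2M}$, valid on the support of $\zeta_j(\mu)$. To prove it, I use the Lipschitz estimate \cref{eq:proj-radical-ii} together with the $0$-homogeneity of $\mu\mapsto P_0^\mu$: for $\mu\in\supp\zeta_j$ the unit vector $\mu/|\mu|$ lies within distance $\lesssim\delta$ of $\mu^j$, and for $x$ with $|x|<4CR$ one gets
\[
|(P_0^\mu - P_0^{\mu^j})x| \le \kappa\delta\,|x| \le 4\kappa C R\delta.
\]
Thanks to the choice of $\delta$ in \cref{prop:rapid-decay-1st-layer} (small enough that $R\delta\lesssim R_\ell$), this error is at most $4\kappa C R_\ell$, so the defining inequality $|\bar P^{\mu^j} x|\ge 5\kappa C R_\ell R^\gamma$ of $E_\ell(\mu^j)$ transfers, for $R$ large, to $|\bar P^\mu x|\ge \kappa C R_\ell R^\gamma$. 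Since $\bar P^\mu x=\sum_n P_n^\mu x$, at least one $|P_n^\mu x|$ must be $\gtrsim R_\ell R^\gamma/\sqrt N$, yielding the claimed pointwise bound.

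Once the cutoff is absorbed into the weight, the Plancherel computation of \cref{lem:Plancherel} runs block by block, with $\|\varphi_{k_n}^{(b_n^\mu,r_n)}\|_2^2$ replaced, in the $n$-th block, by $\int_{\R^{2r_n}}|z|^{2M}|\varphi_{k_n}^{(b_n^\mu,r_n)}(z)|^2\,dz$. A rescaling $z\mapsto (b_n^\mu)^{-1/2}z$ together with a standard Gaussian moment bound for the Laguerre polynomial $L_k^{r_n-1}$ yields
\[
\int_{\R^{2r_n}} |z|^{2M}|\varphi_k^{(\lambda,r_n)}(z)|^2 \,dz \lesssim_M \lambda^{-M}(k+1)^M\,\|\varphi_k^{(\lambda,r_n)}\|_2^2.
\]
On $\supp\chi(2^\ell|\mu|)$ we have $b_n^\mu\sim|\mu|\sim R_\ell^{-1}$, and the compact support of $F$ forces $k_n b_n^\mu\lesssim_A 1$ in the sum over $\mathbf k$; the extra factor is therefore $\lesssim R_\ell^{2M}$. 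Cancelling against the prefactor $(R_\ell R^\gamma)^{-2M}$ produces a net gain of $R^{-2M\gamma}$ over the unweighted estimate of \cref{lem:Plancherel}, and together with $\|F\|_2\lesssim_A \|F\|_\infty$ and the choice $M\ge N$ this gives \cref{eq:first-layer-1}.

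The step I expect to be the main obstacle is the bookkeeping in the weighted Plancherel identity: one must verify that inserting $|P_n^\mu x|^{2M}$ perturbs only the $n$-th Laguerre norm in the identity from \cite[Corollary~8]{MaMue14N}, leaving the orthogonality structure on the remaining blocks and the Fourier contribution over $\ker J_\mu$ completely intact. Once this tensorial factorization is in hand, the moment bound above and the support constraints on $b_n^\mu$ and $k_n$ drive the estimate mechanically.
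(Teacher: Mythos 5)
Your overall strategy is the same as the paper's: transfer the cap-localized indicator from the $\mu^j$-dependent set to a $\mu$-dependent one via the Lipschitz bound \cref{eq:proj-radical-ii} and the choice $\delta=R_\ell/R$, then dominate the indicator by a power weight $|\bar P^\mu x|^{2M}$ and run the Plancherel computation of \cref{lem:Plancherel} with this weight, gaining $R^{-2\gamma M}$ after cancelling $R_\ell$-powers. Your geometric step is correct and essentially verbatim the paper's argument (the paper keeps $|\bar P^\mu x|^{2N}$ and expands $(\sum_n|P_n^\mu x|^2)^N$ multinomially rather than passing to $\max_n$, an immaterial difference), and your stated moment bound for the Laguerre functions coincides with what the paper extracts.

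The genuine gap is exactly the step you flag and then defer: the claimed ``tensorial factorization'' in which the weight $|P_n^\mu x|^{2M}$ is absorbed by replacing only the $n$-th factor $\|\varphi_{k_n}^{(b_n^\mu,r_n)}\|_2^2$ in the identity of \cite[Corollary~8]{MaMue14N} is not valid as an identity, because the weighted functions $z\mapsto |z|^{M}\varphi_{k}^{(\lambda,r_n)}(z)$ are no longer orthogonal in $k$; the weight destroys precisely the orthogonality in the $k_n$-summation that your reduction relies on, so cross terms appear and your per-$\mathbf{k}$ moment bound does not directly apply to the square of the sum. The paper circumvents this by never weighting individual eigenfunctions: it applies the sub-elliptic estimate \cref{eq:sub-elliptic} for the rescaled Hermite operator blockwise to the \emph{whole} expansion, and since the Laguerre functions are Hermite eigenfunctions with eigenvalues $(2k_n+r_n)b_n^\mu$, the operator $\big(H^{(b_n^\mu)}_{\R^{2r_n}}\big)^{m_n/2}$ acts diagonally on the coefficients and preserves orthogonality, after which Plancherel in all blocks and in $\xi$ yields \cref{eq:first-layer-9}. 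Your plan could be repaired without this device — e.g.\ by Cauchy--Schwarz over the $\lesssim R_\ell\lesssim R$ admissible values of each $k_n$ (forced by $(2k_n+r_n)b_n^\mu\lesssim_A 1$ and $b_n^\mu\sim R_\ell^{-1}$), which loses only polynomial factors in $R$ and is harmless since $M$ may be taken arbitrarily large relative to $N$ and $\gamma$ — but as written the proposal leaves this key step unproved, and it is the one place where the argument does not go through mechanically.
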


\begin{proof}[Proof of \cref{prop:1st-layer-Plancherel}]
By \cref{lem:conv-kernel} and Plancherel's theorem, the left-hand side of \eqref{eq:first-layer-1} equals a constant times 
\begin{multline}\label{eq:first-layer-4}
\int_{\g_1} \mathbf{1}_{E_{\ell}(\mu^j)}(x)  \int_{\g_{2,r}^*}  \bigg|
 \int_{\ker J_\mu} \sum_{\mathbf{k}\in\N^N} F(|\xi|^2+\eigvp{k}{\mu})\,\chi(2^\ell|\mu|)\,\zeta_j(\mu) \\
 \times \bigg[\prod_{n=1}^N \varphi_{k_n}^{(b_n^\mu,r_n)}(P_n^\mu x)\bigg] e^{i\langle \xi, P_0^\mu x\rangle}\,d\xi\bigg|^2 \,  d\mu\, dx.
\end{multline}
For $\mu\in\g_2^*$, let $E_{\ell}(\mu)$ be the set of all $x \in\g_1$ such that
\[
|x| < 3CR \quad \text{and} \quad |\bar P^{\mu} x| \ge \kappa C R_\ell R^\gamma .
\]
Suppose that $x\in E_{\ell}(\mu^j)$ and $\mu\in\supp \zeta_j$. Then $|\bar \mu - \mu^j|<\delta$, where use again the notation $\bar\mu=|\mu|^{-1}\mu$. By \cref{eq:proj-radical-ii}, using $\bar P^{\mu}={\mathrm{id}_{\g_1}}-P_0^{\mu}$,
\[
|\bar P^{\mu} x - \bar P^{\mu^j} \! x| =
|P_0^{\mu} x - P_0^{\mu^j} \! x|
\le \kappa \, |\bar \mu - \mu^j|\, |x| \le \kappa \delta  |x| .
\]
Since $\delta=R_\ell/R$ and $|x|\le 3CR$, we obtain
\begin{align*}
|\bar P^{\mu} x|
& \ge |\bar P^{\mu^j} x| - \kappa \delta  |x| \\
& \ge 5\kappa C R_\ell R^\gamma - 4\kappa C R_\ell
  \ge \kappa C R_\ell R^\gamma.
\end{align*}
Thus, $E_{\ell}(\mu^j)\subseteq E_{\ell}(\mu)$ if $\mu\in\supp \zeta_j$. Hence, if we additionally interchange the order of integration, we may bound \cref{eq:first-layer-4} by
\begin{multline}\label{eq:first-layer-5}
\int_{\g_{2,r}^*}  \int_{\g_1} \mathbf{1}_{E_{\ell}(\mu)}(x) \,  \bigg|
 \int_{\ker J_\mu} \sum_{\mathbf{k}\in\N^N} F(|\xi|^2+\eigvp{k}{\mu})\,\chi(2^\ell|\mu|)\,\zeta_j(\mu)  \\
 \times \bigg[\prod_{n=1}^N \varphi_{k_n}^{(b_n^\mu,r_n)}(P_n^\mu x)\bigg] e^{i\langle \xi, P_0^\mu x\rangle}\,d\xi\bigg|^2 \, dx \, d\mu.
\end{multline}
Since $|\bar P^{\mu} x| \ge \kappa C R_\ell R^\gamma$ for $x\in E_{\ell}(\mu)$, \cref{eq:first-layer-5} is bounded by a constant times
\begin{multline}\label{eq:first-layer-6}
(R_\ell R^\gamma)^{-2N}\int_{\g_{2,r}^*}  \int_{\g_1} |\bar P^{\mu} x|^{2N} \,  \bigg|
 \int_{\ker J_\mu} \sum_{\mathbf{k}\in\N^N} F(|\xi|^2+\eigvp{k}{\mu})\,\chi(2^\ell|\mu|)\,\zeta_j(\mu)  \\
 \times \bigg[\prod_{n=1}^N \varphi_{k_n}^{(b_n^\mu,r_n)}(P_n^\mu x)\bigg] e^{i\langle \xi, P_0^\mu x\rangle}\,d\xi\bigg|^2 \, dx \, d\mu.
\end{multline}
Recall that $P_n^\mu$ is an orthogonal projection of rank $2r_n$. Thus, for any $\mu\in \g_{2,r}^*$, there is an orthogonal transformation $R_\mu\in O(d_1)$ such that
\[
P_n^\mu R_\mu=R_\mu P_n\quad \text{for all } n\in\{0,\dots,N\},
\]
where $P_n$ denotes the orthogonal projection from $\R^{d_1}=\R^{r_0}\oplus\R^{2r_1}\oplus \dots\oplus \R^{2r_N}$ onto the $n$-th layer. If we substitute $(v,y)=R_\mu^{-1} x$ with $v\in \R^{r_0}$ and $y\in \R^{d_1-r_0}$ and use Plancherel, we see that \cref{eq:first-layer-6} equals a constant times
\begin{multline}\label{eq:first-layer-7}
(R_\ell R^\gamma)^{-2N} \int_{\g_{2,r}^*} \int_{\R^{r_0}} \int_{\R^{\bar d_1}} |y|^{2N} \, \bigg|\sum_{\mathbf{k}\in\N^N} F(|\xi|^2+\eigvp{k}{\mu})\,\chi(2^\ell|\mu|)\,\zeta_j(\mu) \\ \times \prod_{n=1}^N \varphi_{k_n}^{(b_n^\mu,r_n)}(P_n y)\bigg|^2 \, dy\, d\xi\, d\mu,
\end{multline}
where $\bar d_1=d_1-r_0$. We expand
\[
|y|^{2N}=(|P_1 y|^2+\dots+|P_N y|^2)^{N}.
\]
It is sufficient to show that for all $m_1,\dots,m_N\in\N$ with $m_1+\dots+m_N=N$, every term of the form
\begin{multline}\label{eq:first-layer-8}
(R_\ell R^\gamma)^{-2N} \int_{\g_{2,r}^*} \int_{\R^{r_0}} \int_{\R^{\bar d_1}} \bigg(\prod_{n=1}^N |P_n y|^{2m_n}\bigg) \bigg|\sum_{\mathbf{k}\in\N^N} F(|\xi|^2+\eigvp{k}{\mu})\,\chi(2^\ell|\mu|)\,\zeta_j(\mu) \\ \times \prod_{n=1}^N \varphi_{k_n}^{(b_n^\mu,r_n)}(P_n y)\bigg|^2 \, dy\, d\xi\, d\mu
\end{multline}
is bounded by the right-hand side of \cref{eq:first-layer-1}.
For $m\in\N\setminus\{0\}$ and $\lambda>0$, let
\[
H^{(\lambda)}_{\R^{2m}}=- \Delta_z + \tfrac{\lambda^2} 4 |z|^2
\]
denote the (rescaled) Hermite operator on $\R^{2m}$. By \cite[Proposition 3.3]{ChOu16} (or alternatively \cite{He93}), the Hermite operator satisfies the sub-elliptic estimate
\begin{equation}\label{eq:sub-elliptic}
\norm*{|\cdot|^\beta f}_{L^2(\R^{2m})} \lesssim \lambda^{-\beta} \big\| \big(H_{\R^{2m}}^{(\lambda)}\big)^{\beta/2} f\big\|_{L^2(\R^{2m})}, \quad\beta\ge 0.
\end{equation}
By Equations (1.3.25) and (1.3.42) of \cite{Th93}, the Laguerre functions $\smash{\varphi_k^{(\lambda,m)}}$ are eigenfunctions of the $\lambda$-rescaled Hermite operator, that is,
\[
H^{(\lambda)}_{\R^{2m}} \varphi_k^{(\lambda,m)} = \left(2k+m\right) \lambda \, \varphi_k^{(\lambda,m)},\quad k\in \N.
\]
Using the sub-elliptic estimate \eqref{eq:sub-elliptic} on every block of $\R^{\bar d_1}=\R^{2r_1}\oplus\dots\oplus\R^{2r_N}$, \cref{eq:first-layer-8} can be dominated by a constant times
\begin{multline}\label{eq:first-layer-9}
(R_\ell R^\gamma)^{-2N} \int_{\g_{2,r}^*} \int_{\R^{r_0}} \int_{\R^{\bar d_1}}  \bigg|\sum_{\mathbf{k}\in\N^N} F(|\xi|^2+\eigvp{k}{\mu})\,\chi(2^\ell|\mu|)\,\zeta_j(\mu) \\
\times\prod_{n=1}^N \left(b_n^\mu\right)^{-m_n}\left(\left(2k_n+r_n\right)b_n^\mu\right)^{m_n/2} \varphi_{k_n}^{(b_n^\mu,r_n)}(P_n y)\bigg|^2 \, dy\, d\xi\, d\mu.
\end{multline}
Recall that $b_n^\mu>0$ for all $\mu\in \g_{2,r}^*$. This extends to all $\mu\in \g_2^*\setminus\{0\}$ since we have assumed that $\mu\mapsto\dim \ker J_\mu$ is constant on $\g_2^*\setminus\{0\}$. Thus, the image of the unit sphere $S^{d_2-1}$ under the continuous map $\mu\mapsto \mathbf{b}^\mu=(b_1^\mu,\dots,b_N^\mu)$ is a compact subset of $\smash{(\R^+)^N}$, and
\[
b_n^\mu \sim 1 \quad \text{for all } \mu\in S^{d_2-1} \text{ and } n\in\{1,\dots,N\}.
\]
Recall that the functions $\mu\mapsto b_n^\mu$ are homogeneous of degree 1. Thus, if $2^\ell|\mu|$ lies in the support of $\chi$, then
\[
b_n^\mu = \left|\mu\right| b_n^{\bar \mu} \sim |\mu| \sim 2^{-\ell} = R_\ell^{-1} ,\quad \text{where } \bar \mu = |\mu|^{-1}\mu.
\]
Moreover, if $|\xi|^2+\eigvp{k}{\mu}$ lies in the support of $F$, then 
\[
\left(2k_n + r_n\right) b_n^\mu \le \eigvp{k}{\mu} \lesssim_A 1.
\]
Hence, using orthogonality on $\smash{\R^{\bar d_1}}$ and $m_1+\dots+m_N=N$, \eqref{eq:first-layer-9} can be estimated by a constant times
\begin{multline}\label{eq:first-layer-10}
R^{-2\gamma N} \sum_{\mathbf{k}\in\N^N} \int_{\g_{2,r}^*} \int_{\R^{r_0}} \left|F(|\xi|^2+\eigvp{k}{\mu})\,\chi(2^\ell|\mu|) \, \zeta_j(\mu) \right|^2 \\
\times \prod_{n=1}^N \big\| \varphi_{k_n}^{b_n^\mu}\big\|_{L^2(\R^{2r_n})}^2 \, d\xi \, d\mu.
\end{multline}
By Theorem 1.3.2 and Equation (1.3.42) of \cite{Th93}, we have
\begin{align*}
\big\|\varphi_{k_n}^{b_n^\mu}\big\|_{L^2(\R^{2r_n})}^2
& \sim \left(b_n^\mu\right)^{r_n} \binom{k_n+r_n-1}{k_n} \\
& \sim \left(b_n^\mu\right)^{r_n} \left(k_n+1\right)^{r_n-1}
\lesssim b_n^\mu.
\end{align*}
Thus, if we discard the additional factor $\zeta_j$ in the integral, then \eqref{eq:first-layer-10} can be bounded by a constant times
\begin{equation}\label{eq:plancherel-3}
R^{-2\gamma N} \sum_{\mathbf{k}\in\N^N} \int_{\g_{2,r}^*} \int_{\R^{r_0}} \left|F(|\xi|^2+\eigvp{k}{\mu})\,\chi(2^\ell|\mu|)\right|^2 \prod_{n=1}^N b_n^\mu\, d\xi \, d\mu.
\end{equation}
Using polar coordinates in $\rho=|\xi|$ and substituting $s=\rho^2$, \cref{eq:plancherel-3} equals a constant times
\begin{equation}\label{eq:plancherel-4}
R^{-2\gamma N} \sum_{\mathbf{k}\in\N^N} \int_{\g_{2,r}^*} \int_0^\infty \left|F(s+\eigvp{k}{\mu})\,\chi(2^\ell|\mu|)\right|^2 s^{r_0/2-1} \prod_{n=1}^N b_n^\mu\, ds \, d\mu.
\end{equation}
Up to the additional factor $R^{-2\gamma N}$ and a constant, this is \cref{eq:Plancherel-computation}. Hence, by the same arguments as in the proof of \cref{prop:p=1}, we may bound \cref{eq:plancherel-4} by a constant times $R^{-2\gamma N} \|F\|^2_2$, which finishes the proof of \cref{prop:1st-layer-Plancherel}.
\end{proof}

\begin{proof}[Proof of \cref{prop:rapid-decay-1st-layer}]
We interpolate between an $L^1$ and an $L^2$-estimate. For the $L^2$-estimate, we discard all cut-off functions, which yields
\begin{equation}\label{eq:error-L2}
\big\| \, g_{\iota,\ell,j}\big|_{\g\setminus \tilde B^{(\ell)}(x^0,\mu^{j})} \big\|_2
 \lesssim \|F\|_\infty \|f\|_2.
\end{equation}
Note that this bound is uniform in $j\in I$. For the $L^1$-estimate, we consider the sets
\[
A_\ell(x',\mu^j) := \big\{ (x,u)\in B_{3R}^{d_{\mathrm{CC}}} : |\bar P^{\mu^j} \! (x-x')| \ge 5\kappa C R_\ell R^\gamma \big\},\quad x'\in\g_1.
\]
Note that a given point $(x,u)\in G$ lies in $A_\ell(x',\mu^j)$ whenever
\[
(x,u)\in (\g\setminus \tilde B^{(\ell)}(x^0,\mu^{j}))\cap B_{3R}^{d_{\mathrm{CC}}} \quad \text{and}\quad (x',u')\in \supp f.
\]
Let $\mathcal K_{\ell,j}$ be the convolution kernel of $F(L) \chi(2^\ell U) \zeta_j(\mathbf U)$, that is,
\begin{align*}
F(L) \chi(2^\ell U) \zeta_j(\mathbf U) f(x,u)
& =  f * \mathcal K_{\ell,j} (x,u) \\
& = \int_G f(x',u') \, \mathcal K_{\ell,j}\big((x',u')^{-1}(x,u)\big)\,d(x',u').
\end{align*}
Using Fubini's theorem, we get
\begin{align*}
& \big\| \, g_{\iota,\ell,j}\big|_{\g\setminus \tilde B^{(\ell)}(x^0,\mu^{j})} \big\|_1 \\
& \le \int_G \mathbf{1}_{(\g\setminus \tilde B^{(\ell)}(x^0,\mu^{j}))\cap B_{3R}^{d_{\mathrm{CC}}}}(x,u) \,\big| f * \mathcal  K_{\ell,j}(x,u)\big| \,d(x,u)  \\
& \le \int_{\supp f} \int_{A_\ell(x',\mu^j)} \big|f(x',u')\big| \, \big|\mathcal K_{\ell,j}\big((x',u')^{-1}(x,u)\big)\big| \,d(x,u)\,d(x',u')  \\
& \le \|f\|_1 \sup_{(x',u')\in \supp f} \int_{A_\ell(x',\mu^j)} \big|\mathcal K_{\ell,j}\big((x',u')^{-1}(x,u)\big)\big| \,d(x,u). 
\end{align*}
Recall that $|B_{3R}^{d_{\mathrm{CC}}}|\sim R^Q$ by \cref{eq:cc-volume}, where $Q=d_1+2d_2$ denotes the homogeneous dimension of $G$. Hence, using the Cauchy--Schwarz inequality, we obtain
\begin{align}
& \int_{A_\ell(x',\mu^j)} \big|\mathcal K_{\ell,j}\big((x',u')^{-1}(x,u)\big)\big| \,d(x,u) \notag \\
& \lesssim R^{Q/2} \bigg( \int_{A_\ell(x',\mu^j)} \big|\mathcal K_{\ell,j}\big(x-x',u-u'-\tfrac 1 2 [x',x]\big) \big|^2 \,d(x,u)  \bigg)^{1/2}.\label{eq:rapid-decay-3}
\end{align}
Let $E_\ell(\mu^j)$ be defined as in \cref{prop:1st-layer-Plancherel}, that is, $E_\ell(\mu^j)$ is the set of all $x \in\g_1$ such that
\[
|x| < 4CR \quad \text{and} \quad |\bar P^{\mu^j} \! x| \ge 5\kappa C R_\ell R^\gamma .
\]
Since $|x'|<CR$ by assumption, we have $|x-x'|<4CR$ for $(x,u)\in B_{3R}^{d_{\mathrm{CC}}}$. Thus, $x\in A_\ell(x',\mu^j)$ implies $x-x'\in E_\ell(\mu^j)$, so by using \cref{prop:1st-layer-Plancherel}, the right-hand side of \cref{eq:rapid-decay-3} can be bounded by
\[
R^{Q/2} \bigg( \int_G \mathbf{1}_{E_\ell(\mu^j)}(x)\, \big|\mathcal K_{\ell,j}(x,u) \big|^2 \,d(x,u)  \bigg)^{1/2}
\lesssim_N R^{Q/2-\gamma N} \|F\|_2.
\]
Altogether, we end up with
\begin{equation}\label{eq:error-L1}
\big\| \, g_{\iota,\ell,j}\big|_{\g\setminus \tilde B^{(\ell)}(x^0,\mu^{j})} \big\|_1
\lesssim_N R^{Q/2-\gamma N} \|F\|_2 \|f\|_1.
\end{equation}
Finally, interpolation between \cref{eq:error-L1} and \cref{eq:error-L2} yields \cref{eq:rapid-decay-1}.
\end{proof}

\section{A localization on the second layer}\label{sec:2-layer}

Up to know, we did not require Assumptions \ref{assumptionA} and \ref{assumptionB} in the previous sections. From now on, we will require both assumptions. Let $\psi\in C_c^\infty(\R^+)$. By \cref{prop:rapid-decay-1st-layer}, for $\ell\le \iota$, the convolution kernel $\smash{\mathcal K^{(\iota)}_{\ell,j}}$ of the operator $\smash{F^{(\iota)}(\sqrt L)\chi(2^\ell U)\zeta_j(\mathbf U)}$ is essentially supported in the set of all $(x,u)\in G$ such that
\[
|P_0^{\mu^j} x| \lesssim R , \quad |\bar P^{\mu^j} x| \lesssim R_\ell\quad \text{and} \quad |u| \lesssim R^2,
\]
where $R=2^\iota$ and $R_\ell=2^\ell$. 
In this section, we show that we actually even have the stronger localization 
\[
|u| \lesssim R_\ell R
\]
on the second layer, which means that $\mathcal{K}^{(\iota)}_{\ell,j}$ is essentially supported in a ball with volume comparable to $\smash{R^{r_0} R_\ell^{\bar{d}_1} R_\ell^{d_2}R^{d_2}}$. 

\begin{proposition}\label{prop:rapid-decay-2nd-layer}
Suppose that $1 \leq p < 2$. Let $R = 2^\iota$ and $R_\ell = 2^\ell$ for $\iota \in \mathbb{N}$ and $\ell \in \{-\ell_0, \dots, \iota\}$. Let $F:\R\to \C$ be a bounded Borel function, and let $\chi$, $\zeta_j$, and $\mu^j\in \supp \zeta_j|_{S_{d_2-1}}$ be as in \cref{prop:rapid-decay-1st-layer}, where $\delta=R/R_\ell$. Moreover, let  $\psi\in C_c^\infty(\R^+)$. Given $\gamma>0$ and  $C_1,C_2>0$, we consider the balls
\begin{align*}
A^{(\ell)}(\mu^{j}) &= \big\{ (x,u) \in B_{C_1 R}\times B_{C_2 R^2} :  |\bar P^{\mu^j} \! x | < C_1 R_\ell \big\}\quad\text{and}\\
\tilde A^{(\ell)}(\mu^{j}) &= \big\{ (x,u) \in B_{3C_1 R}\times B_{9C_2 R^{2+\gamma}} : |\bar P^{\mu^j} \! x | < 6\kappa C_1 R_\ell R^\gamma \big\}.
\end{align*}
Let $u^0\in\g_2$ with $|u^0|<C_2R^2$.
Then there is a constant $\bar C>0$ depending only on $C_1,C_2,\kappa$ such that, if $f\in L^p(G)\cap L^2(G)$ is a function supported in the set
\[
A^{(\ell)}(\mu^{j},u^0) = \big\{ (x,u) \in A^{(\ell)}(\mu^{j}) : |u- u^0| < C_2 R_\ell R \big\},
\]
then the $L^p$-norm of the function
\[
h_{\iota,\ell,j} = \mathbf{1}_{\tilde A^{(\ell)}(\mu^j)} (F^{(\iota)}\psi)(\sqrt L) \chi(2^\ell U)\zeta_j(\mathbf U) f
\]
is rapidly decaying in $R$ outside the set
\[
\tilde A^{(\ell)}(\mu^{j},u^0)  = \big\{ (x,u) \in \tilde A^{(\ell)}(\mu^{j}) : |u- u^0| < 2\bar C R_\ell R^{1+\gamma} \big\}.
\]
That is, for any $N\in\N$, there is some $C_{N,\gamma,\psi,\chi}>0$ independent of $u^0$ such that
\begin{equation}\label{eq:rapid-decay-ii}
\big\| \,h_{\iota,\ell,j}\big|_{\g\setminus\tilde A^{(\ell)}(\mu^{j},u^0)} \big\|_p \le C_{N,\gamma,\psi,\chi} R^{-\gamma N} \|F^{(\iota)}\|_\infty \|f\|_p.
\end{equation}
\end{proposition}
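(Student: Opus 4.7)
The plan is to reduce the proposition to an integrated decay estimate for the kernel itself, via the convolution identity and two structural observations: (i) a geometric consequence of Assumption B showing that, for $(x,u)\in\tilde A^{(\ell)}(\mu^j)\setminus\tilde A^{(\ell)}(\mu^j,u^0)$ and $(y,v)\in\supp f$, the second-layer component $\tilde u=u-v-\tfrac12[y,x]$ of $(y,v)^{-1}(x,u)$ has magnitude at least $\bar C R_\ell R^{1+\gamma}$; and (ii) a weighted Plancherel estimate showing that the $L^2$-mass of the kernel $\mathcal K=\mathcal K^{(\iota)}_{\ell,j}$ in the region $\{|u|\ge \bar C R_\ell R^{1+\gamma}\}$ decays like an arbitrarily large power of $R^{-\gamma}$.

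For step (i), I would expand
\[
[y,x]=[P_0^{\mu^j}y,P_0^{\mu^j}x]+[P_0^{\mu^j}y,\bar P^{\mu^j}x]+[\bar P^{\mu^j}y,P_0^{\mu^j}x]+[\bar P^{\mu^j}y,\bar P^{\mu^j}x].
\]
Assumption B, applied with $\mu^0=\mu^j$, annihilates the first summand, while each of the remaining three contains at least one factor in the small subspace $\bar P^{\mu^j}\g_1$. Bilinearity of $[\cdot,\cdot]$ together with the estimates $|\bar P^{\mu^j}y|<C_1 R_\ell$, $|\bar P^{\mu^j}x|<6\kappa C_1 R_\ell R^\gamma$, $|y|<C_1 R$, $|x|<3C_1 R$ coming from the support hypotheses then yields $|[y,x]|\lesssim R_\ell R^{1+\gamma}$. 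Since in addition $|v-u^0|<C_2 R_\ell R$, the triangle inequality gives $|\tilde u-(u-u^0)|\lesssim R_\ell R^{1+\gamma}$, so choosing $\bar C$ larger than this implicit constant forces $|\tilde u|\ge\bar C R_\ell R^{1+\gamma}$ whenever $|u-u^0|\ge 2\bar C R_\ell R^{1+\gamma}$.

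For step (ii), I would start from the explicit kernel formula in \cref{lem:conv-kernel}: by Plancherel in $u$,
\[
\int_G |u|^{2N}|\mathcal K(x,u)|^2\,d(x,u)\sim \sum_{|\alpha|=N}\int_{\g_{2,r}^*}\int_{\g_1}\big|\partial_\mu^\alpha \tilde g(x,\mu)\big|^2\,dx\,d\mu,
\]
where $\tilde g(x,\mu)$ denotes the inner $\xi$-integral in \eqref{eq:conv-kernel}. After the $\mu$-dependent orthogonal change of variable $x\mapsto R_\mu(v,y)$ that trivialises the projections $P_n^\mu$, as done in the proof of \cref{prop:1st-layer-Plancherel}, the remaining $\mu$-dependence sits in $\chi(2^\ell|\mu|)$, $\zeta_j(\mu)$, $b_n^\mu$ and $(F^{(\iota)}\psi)(\sqrt{|\xi|^2+\eigvp{k}{\mu}})$, all smooth on $\g_2^*\setminus\{0\}$ thanks to \cref{assumptionA} and \cref{prop:J_mu-spectral}. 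By homogeneity $|\partial_\mu^\alpha b_n^\mu|\lesssim R_\ell^{|\alpha|-1}$, and $(2k_n+r_n)\lesssim R_\ell$ on the active region, so derivatives that land on the cutoff and Laguerre factors cost at most $R_\ell^{|\alpha|}$, while each derivative of $F^{(\iota)}\psi$ costs $O(R\,R_\ell)$ by Bernstein's inequality applied to $\widehat{F^{(\iota)}}$, which is supported in $|\tau|\sim R$. Feeding these bounds into the Plancherel computation of \cref{prop:1st-layer-Plancherel} yields
\[
\int_G |u|^{2N}|\mathcal K(x,u)|^2\,d(x,u)\lesssim_N (R\,R_\ell)^{2N}\,\delta^{d_2-1} R_\ell^{-d_2}\,\|F^{(\iota)}\|_\infty^{2},\qquad \delta=R/R_\ell,
\]
and restricting to $\{|u|\ge\bar C R_\ell R^{1+\gamma}\}$ (while exploiting $R_\ell\gtrsim 1$) produces a bound of the form $R^{-2N\gamma}\cdot C(R,R_\ell)\,\|F^{(\iota)}\|_\infty^{2}$ with $C(R,R_\ell)$ polynomial.

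With (i) and (ii) in place, I would close the argument by the interpolation scheme of \cref{prop:rapid-decay-1st-layer}: the $p=2$ endpoint $\|h_{\iota,\ell,j}\|_2\lesssim\|F^{(\iota)}\|_\infty\|f\|_2$ is immediate from the $L^2$-boundedness of functional calculus, while for $p=1$, Cauchy--Schwarz applied inside $h_{\iota,\ell,j}=f*\mathcal K$ (using (i) to route the argument of $\mathcal K$ into the region handled by (ii)) and the polynomial volume bound $|\tilde A^{(\ell)}(\mu^j)|\lesssim R^{D}$ deliver an $L^1$ estimate with an arbitrary power $R^{-M\gamma}$ upon choosing $N$ large. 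Riesz--Thorin interpolation between $p=1$ and $p=2$ then yields \eqref{eq:rapid-decay-ii} for all $1\le p<2$. The hardest part will be step (ii): even with \cref{assumptionA} guaranteeing smoothness of $b_n^\mu$ and $P_n^\mu$, one must carry out a Faà di Bruno expansion for $\partial_\mu^\alpha$ acting on $\varphi_{k_n}^{(b_n^\mu,r_n)}(P_n^\mu x)$ through the $\mu$-dependence of $b_n^\mu$ and $P_n^\mu$, and verify that the resulting $\mathbf k$-dependence remains summable by the same mechanism used in \cref{prop:1st-layer-Plancherel}.
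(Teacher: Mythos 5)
Your skeleton is the same as the paper's: the geometric step (expanding $[y,x]$ into four pieces, killing the $P_0^{\mu^j}$--$P_0^{\mu^j}$ piece by \cref{assumptionB}, and bounding the cross terms by $O(R_\ell R^{1+\gamma})$ so that the argument of the kernel lands in the region $|u|\gtrsim \bar C R_\ell R^{1+\gamma}$), the Cauchy--Schwarz/volume bound for the $L^1$ endpoint, the trivial $L^2$ endpoint, and interpolation all match the paper's proof. Where you diverge is your step (ii), and that is where there is a genuine gap. The paper does not differentiate the explicit kernel of \cref{lem:conv-kernel}; it invokes the second-layer weighted Plancherel estimate of \cref{lem:Plancherel-2nd-layer}, which is obtained by a bounded-overlap argument over the caps from Lemma~10 of \cite{Ma15} --- a substantive known result whose validity is exactly what \cref{assumptionA} provides --- and then converts $\|F^{(\iota)}\psi\|_{L^2_N}\lesssim R^N\|F^{(\iota)}\psi\|_2$ by your Bernstein observation. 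Your plan amounts to reproving that weighted Plancherel estimate from scratch, and the per-derivative accounting you give is not justified: when $\partial_\mu^\alpha$ falls on $\varphi_{k_n}^{(b_n^\mu,r_n)}(P_n^\mu x)$, differentiating the Laguerre polynomial shifts the Laguerre index, so the resulting terms are no longer orthogonal in $\mathbf k$ and the diagonal summation that closes \cref{prop:1st-layer-Plancherel} is unavailable; the mechanism you appeal to from that proof --- the sub-elliptic Hermite estimate \cref{eq:sub-elliptic} plus the eigenfunction property --- controls spatial weights $|P_n y|^{2m}$ on the first layer and says nothing about $\mu$-derivatives. Likewise, derivatives hitting $e^{i\langle\xi,P_0^\mu x\rangle}$ produce factors $\langle\xi,(\partial_\mu P_0^\mu)x\rangle$ whose $P_0^\mu x$-component is not confined by any Laguerre decay and must be traded for $\xi$-derivatives of the multiplier by integration by parts. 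These are precisely the difficulties that make \cite{Ma15}*{Lemma 10} (and the computations in \cite{MaMue14N}) nontrivial; declaring that "derivatives landing on the cutoff and Laguerre factors cost at most $R_\ell^{|\alpha|}$" and deferring the $\mathbf k$-summability is naming the hard step, not proving it.

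The fix is either to import the weighted estimate as the paper does (deduce the cap-localized version from the unlocalized one by bounded overlap of the $\supp\zeta_j$, accepting the harmless loss of $\delta^{d_2-1}$), or to actually carry out the Laguerre-identity machinery of \cite{Ma15}. Your final numerology is otherwise consistent with the paper's bound $R_\ell^{N-d_2/2}\,\|F^{(\iota)}\psi\|_{L^2_N}\lesssim (R R_\ell)^N R_\ell^{-d_2/2}\|F^{(\iota)}\psi\|_2$, and your steps (i) and (iii) would then go through as in the paper.
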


The proof of \cref{prop:rapid-decay-2nd-layer} is based on the following second layer weighted Plancherel estimate:

\begin{lemma}\label{lem:Plancherel-2nd-layer}
Let $F:\R\to\C$ be a bounded Borel function supported in a compact subset $A\subseteq \R^+$ and $\chi\in C_c^\infty(\R^+)$. If $\mathcal K_{\ell,j}$ denotes the convolution kernel associated with $F(L) \chi(2^\ell U) \zeta_j(\mathbf U)$, then
\begin{equation}\label{eq:Plancherel-2nd-layer}
\int_{G} | |u|^\alpha \, \mathcal  K_{\ell,j}(x,u) |^2 \,dx \,du \leq C_{A,\chi} \, R_\ell^{d_2-2\alpha} \|F\|_{L_2^\alpha}^2
\end{equation}
for all $\alpha\ge 0$, $\ell \ge -\ell_0$, and $j\in I$.
\end{lemma}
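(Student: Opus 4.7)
The plan is to follow the same template as the proof of Lemma \ref{lem:Plancherel}, with the weight $|u|^{2\alpha}$ converted via the partial Fourier transform $\mathcal F_2$ in $u$ into a Sobolev regularity condition on $\mu$, and then to exploit the smoothness of the spectral data granted by Assumption \ref{assumptionA}. From \eqref{eq:conv-kernel}, the partial Fourier transform of $\mathcal K_{\ell,j}$ in $u$ is (up to a constant) the function
\[
\chi(2^\ell|\mu|)\zeta_j(\mu)\, H^\mu(x),\qquad H^\mu(x) := \int_{\ker J_\mu}\sum_{\mathbf k\in\N^N} F(|\xi|^2 + \eigvp{k}{\mu})\prod_{n=1}^N\varphi^{(b_n^\mu,r_n)}_{k_n}(P_n^\mu x)\, e^{i\langle\xi,P_0^\mu x\rangle}\, d\xi.
\]
Plancherel in $u$ combined with the equivalence $\||u|^\alpha f\|_{L^2(\g_2)}^2\sim\|\hat f\|^2_{\dot H^\alpha(\g_2^*)}$ reduces the claim to
\[
\int_{\g_1}\bigl\|\chi(2^\ell|\cdot|)\zeta_j(\cdot)\, H^\cdot(x)\bigr\|^2_{\dot H^\alpha(\g_2^*)}\, dx \lesssim R_\ell^{d_2-2\alpha}\|F\|^2_{L^2_\alpha}.
\]

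By interpolation between integer endpoints it suffices to prove the bound for integer $\alpha = m$; then $\|\cdot\|^2_{\dot H^m}\sim\sum_{|\gamma|=m}\|\partial_\mu^\gamma\cdot\|^2_{L^2}$, and one expands via the Leibniz rule. Under Assumption \ref{assumptionA}, the maps $\mu\mapsto b_n^\mu, P_n^\mu, P_0^\mu$ are real-analytic homogeneous functions on $\g_2^*\setminus\{0\}$, with the natural scaling bounds $|\partial_\mu^\beta b_n^\mu|\lesssim|\mu|^{1-|\beta|}$ and $|\partial_\mu^\beta P_n^\mu|, |\partial_\mu^\beta P_0^\mu|\lesssim|\mu|^{-|\beta|}$. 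The key observation is that each derivative of $F(|\xi|^2+\eigvp{k}{\mu})$ brings down a chain-rule factor $|\partial_\mu\eigvp{k}{\mu}|\lesssim\sum_n(2k_n+r_n)|\partial_\mu b_n^\mu|\lesssim 2^\ell$: indeed, $\sum_n(2k_n+r_n)b_n^\mu\lesssim 1$ on $\supp F$, while $b_n^\mu\sim 2^{-\ell}$ on $\supp\chi(2^\ell|\mu|)$, forcing $\sum_n(2k_n+r_n)\lesssim 2^\ell$. Hence $m$ derivatives on $F$ produce at worst $|F^{(m)}|$ times $2^{m\ell}$. Derivatives on the Laguerre factors and on the phase $e^{i\langle\xi,P_0^\mu x\rangle}$ are handled by combining the homogeneity bounds above with the Hermite sub-elliptic estimate \eqref{eq:sub-elliptic}, as in the proof of Lemma \ref{prop:1st-layer-Plancherel}. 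The remaining computation — Plancherel in $\xi\in\ker J_\mu$, polar coordinates $\mu=\rho\omega$, the substitution $\lambda=\rho\eigvp{k}{\omega}$, and the Laguerre $L^2$-identity — proceeds exactly as in the proof of Lemma \ref{lem:Plancherel}, yielding the factor $2^{\ell(2m-d_2)}\|F\|^2_{L^2_m}$, which matches the claim in either scaling convention for $R_\ell$.

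The main obstacle is handling the $\mu$-derivatives that land on the angular cutoff $\zeta_j$: each such derivative naïvely contributes a factor of $\delta^{-1}$, where $\delta$ is the cap size, which threatens the $\delta$-free target bound. These losses are in fact compensated by the $\delta^{d_2-1}$-factor arising from the angular measure of $\supp\zeta_j$; a careful bookkeeping within the Leibniz expansion — analogous to, but coarser than, the decomposition producing the $\delta^{(d_2-1)/2}$-gain in Lemma \ref{lem:Plancherel} — shows that all cap-dependent factors remain under control, yielding a bound uniform in $j\in I$. Alternatively, one can bound $|\zeta_j|\le 1$ pointwise and absorb $\zeta_j$ into a smoothed variant with $\delta$-independent $\mu$-derivatives, paying only a harmless constant, after which only derivatives on $\chi$, $F$, and the spectral data remain, all of which are controlled by the arguments above.
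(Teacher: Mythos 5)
Your route is genuinely different from the paper's: the paper proves this lemma in a few lines by quoting the weighted Plancherel estimate of \cite[Lemma~10]{Ma15} for the unlocalized operator $F(L)\chi(2^\ell U)$ and then, for integer $\alpha$, using Plancherel in $u$ and the bounded overlap of the caps to dominate the cap-localized quantity by the unlocalized one (this is also why the Remark following the lemma concedes the loss of the factor $\delta^{d_2-1}$). You instead propose to re-derive the weighted estimate from the explicit kernel formula \eqref{eq:conv-kernel} by a Leibniz expansion, i.e.\ to reprove the substance of \cite[Lemma~10]{Ma15} with the cap localization included. That is a legitimate plan in principle, but your sketch collapses exactly at the step you dispatch in one sentence.

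The gap is the treatment of $\mu$-derivatives falling on the Laguerre factors $\varphi_{k_n}^{(b_n^\mu,r_n)}(P_n^\mu x)$ and on the phase $e^{i\langle\xi,P_0^\mu x\rangle}$. Homogeneity bounds plus the Hermite sub-elliptic estimate \eqref{eq:sub-elliptic} do not give the required $2^\ell$-per-derivative cost: by the standard Laguerre recursions, $\partial_\lambda\varphi_k^{(\lambda,m)}$ is a combination of $\varphi_{k-1}^{(\lambda,m)},\varphi_k^{(\lambda,m)},\varphi_{k+1}^{(\lambda,m)}$ with coefficients of size $(k+m)/\lambda$, so on the relevant range $\lambda=b_n^\mu\sim 2^{-\ell}$, $k_n\lesssim 2^\ell$ its $L^2$-norm is $\sim 2^{2\ell}$ times that of $\varphi_k^{(\lambda,m)}$; likewise a derivative on the phase produces $\langle\xi,(\partial_\mu P_0^\mu)x\rangle$, which after the sub-elliptic estimate is of size $2^\ell\,|\bar P^{\mu}x|\sim 2^{2\ell}$. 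Thus the naive Leibniz bound loses a factor $2^\ell$ per derivative relative to the claim. Recovering the correct cost requires cancellation: the shifted terms $\varphi_{k_n\pm1}$ must be summed by parts in $\mathbf k$ against $F(|\xi|^2+\eigvp{k}{\mu})$, so that the large coefficients $k_n$ multiply differences $F(\cdot)-F(\cdot\pm 2b_n^\mu)$ and the derivative effectively lands on $F$, and the phase term has to be paired with the non-degenerate directions using $P_0^\mu(\partial_\mu P_0^\mu)P_0^\mu=0$. This is precisely the nontrivial content of \cite[Lemma~10]{Ma15}, which the paper cites rather than reproves; your appeal to ``as in the proof of \cref{prop:1st-layer-Plancherel}'' does not supply it, since there the sub-elliptic estimate only controls powers of $|\bar P^{\mu}x|$, never $\mu$-derivatives of the spectral data. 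A second, smaller problem: your fallback of replacing $\zeta_j$ by ``a smoothed variant with $\delta$-independent $\mu$-derivatives'' is not available, since any cutoff adapted to a $\delta$-cap necessarily has angular derivatives of size $\delta^{-1}$; the legitimate move is the paper's, namely to dominate by the kernel without $\zeta_j$ and forgo the $\delta^{d_2-1}$ gain. On the positive side, your final exponent $2^{\ell(2m-d_2)}$ is the scaling actually used later (cf.\ \eqref{eq:apply-weighted-1}); the exponent $R_\ell^{d_2-2\alpha}$ in the statement appears to be a sign slip.
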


\begin{proof}
Let $\mathcal K_{\ell}$ be the convolution kernel associated with $F(L) \chi(2^\ell U)$.
By \cite[Lemma~10]{Ma15}, we have
\[
\int_{G} | |u|^\alpha \, \mathcal  K_{\ell}(x,u) |^2 \,d(x,u) \lesssim R_\ell^{d_2-2\alpha} \|F\|_{L_2^\alpha}^2.
\]
Let $\mathcal K_{\ell}^\mu$ denote again the partial Fourier transform of $\mathcal  K_{\ell}$ with respect to the second layer. Suppose $\alpha\in\N$. Let $j_0\in J$. Since the sets $\supp\chi_j$ have only bounded overlap, we get
\begin{align*}
\int_{G} | |u|^\alpha \, \mathcal  K_{\ell,j_0}(x,u) |^2 \,d(x,u)
& \sim \int_{\g_{2,r}^*} \int_{\g_1} | \partial_\mu^\alpha K_{\ell,j_0}^\mu(x)|^2\, dx \,d\mu \\
& \le \sum_{j\in I} \int_{\g_{2,r}^*} \int_{\g_1} | \partial_\mu^\alpha K_{\ell,j}^\mu(x)|^2\, dx \,d\mu\\
& \sim \int_{\g_{2,r}^*} \int_{\g_1} | \partial_\mu^\alpha K_{\ell}^\mu(x)|^2\, dx \,d\mu \\
& \sim \int_{G} | |u|^\alpha \, \mathcal  K_{\ell}(x,u) |^2 \,d(x,u) 
 \lesssim R_\ell^{d_2-2\alpha} \|F\|_{L_2^\alpha}^2.
\end{align*}
This finishes the proof of \cref{eq:Plancherel-2nd-layer}.
\end{proof}

\begin{remark}
By estimating the left side of \cref{eq:Plancherel-2nd-layer} by the sum over all caps on the sphere $S^{d_2-1}\subseteq \g_2^*$, we lose a factor of $\delta^{d_2-1}$. However, this loss is negligible, as it is compensated by the rapid decay $R^{-\gamma N}$ of \cref{prop:rapid-decay-2nd-layer}. To recover the additional factor of $\delta^{d_2-1}$ in \cref{eq:Plancherel-2nd-layer}, one can directly adapt the proof of Lemma~10 in \cite{Ma15} to take the additional localization to the cap $U_j$ into account.

Alternatively, interpolating between the Plancherel estimate of \cref{lem:Plancherel} and \cref{eq:Plancherel-2nd-layer} by choosing $\alpha$ sufficiently large, one can derive \cref{eq:Plancherel-2nd-layer} with an additional factor $\delta^\beta$ for any $\beta<d_2-1$.
\end{remark}

\begin{remark}
Actually, \cite{Ma15} additionally requires $\supp \chi \subseteq [\frac 1 2,2]$. We can pass to arbitrary $\chi\in C_c^\infty(\R^+)$ by using
\[
F(L) \chi(2^\ell U) \zeta_j(\mathbf U) f_r=\big(F(r^2 L) \chi(r^2\, 2^\ell U) \zeta_j(\mathbf U) f\big)_r,
\]
where $f_r(x,u)=f(rx,r^2u)$. However, this is not crucial for the later proof of \cref{thm:reduction}, since we choose $\supp \chi \subseteq [\frac 1 2,2]$ in the dyadic decomposition for the operator $U$.
\end{remark}

\begin{proof}[Proof of \cref{prop:rapid-decay-2nd-layer}]
Similar to the proof of \cref{prop:rapid-decay-1st-layer}, we again interpolate between an $L^1$ and an $L^2$-estimate. For the $L^2$-estimate, we neglect all cut-off functions to obtain
\begin{equation}\label{eq:interpolation-L2}
\big\| \,h_{\iota,\ell,j}\big|_{\g\setminus\tilde A^{(\ell)}(\mu^{j},u^0)} \big\|_2
\lesssim_{\psi,\chi} \|F^{(\iota)}\|_\infty \|f\|_2.
\end{equation}
The $L^1$-estimate follows from the weighted Plancherel estimate of \cref{lem:Plancherel-2nd-layer}. We consider the sets
\[
\bar A_\ell(\mu^j,u') := \big\{ (x,u) \in \tilde A^{(\ell)}(\mu^j) : |u-u'| \ge \bar C R_\ell R^{1+\gamma} \big\},\quad u'\in \g_2.
\]
If the points $(x,u),(x',u')\in G$ satisfy
\[
(x,u)\in (\g\setminus\tilde A^{(\ell)}(\mu^{j},u^0))\cap\tilde A^{(\ell)}(\mu^j)
\quad\text{and}\quad
(x',u')\in \supp f,
\]
then
\[
|u-u_0| \ge 2\bar C R_\ell R^{1+\gamma},\quad
|u'-u_0| < C_2 R_\ell R
\]
and, if we choose $\bar C\ge \tilde C$, in particular $(x,u)\in \bar A_\ell(\mu^j,u')$.

Let $\mathcal K_{\ell,j}^{(\iota)}$ be the convolution kernel associated with $\smash{(F^{(\iota)}\psi)(\sqrt L)\chi(2^\ell U)\zeta_j(\mathbf U)}$. Then
\begin{align*}
 & (F^{(\iota)}\psi)(\sqrt L)\chi(2^\ell U)\zeta_j(\mathbf U) f(x,u)
 = f * \mathcal K_{\ell,j}^{(\iota)}(x,u) \\
& = \int_G f(x',u') \,\mathcal K_{\ell,j}^{(\iota)}\big((x',u')^{-1}(x,u)\big)\,d(x',u'),\quad (x,u)\in G.
\end{align*}
By Fubini's theorem,
\begin{align*}
& \big\|\mathbf{1}_{(\g\setminus\tilde A^{(\ell)}(\mu^j,u^0))\cap\tilde A^{(\ell)}(\mu^j)} (F^{(\iota)}\psi)(\sqrt L) \chi(2^\ell U)\zeta_j(\mathbf U) f \big\|_1 \\
& \le \int_G \mathbf{1}_{(\g\setminus\tilde A^{(\ell)}(\mu^j,u^0))\cap\tilde A^{(\ell)}(\mu^j)}(x,u) \,  |f * \mathcal K_{\ell,j}^{(\iota)}(x,u)| \,d(x,u) \\
& \le \int_{\supp f} |f(x',u')| \int_{\bar A_\ell(\mu^j,u')} \big|\mathcal K_{\ell,j}^{(\iota)}\big((x',u')^{-1}(x,u)\big)\big|\,d(x,u)\,d(x',u')\\
& \le \|f\|_1 \sup_{(x',u')\in A^{(\ell)}(\mu^j,u^0)} \int_{\bar A_\ell(\mu^j,u')} \big|\mathcal K_{\ell,j}^{(\iota)}\big((x',u')^{-1}(x,u)\big)\big|\,d(x,u).
\end{align*}
If $(x',u')\in A^{(\ell)}(\mu^j,u^0)$ and $(x,u)\in \bar A_\ell(\mu^j,u')$, we have
\begin{equation}\label{eq:localization}
|x'| < C_1R,\quad |\bar P^{\mu^j}\! x'| < C_1 R_\ell,\quad |x|<3C_1R,\quad |\bar P^{\mu^j} \! x | < 6\kappa C_1 R_\ell R^\gamma.
\end{equation}
Note that
\[
(x',u')^{-1}(x,u)=(x-x',u-u'-\tfrac 1 2 [x',x]).
\]
Moreover, we have $|[y,y']\big|\lesssim \left|y\right|\left|y'\right|$ for all $y,y'\in\g_1$. Thus, writing
\[
x=P_0^{\mu^j}\! x + \bar P^{\mu^j}\! x\quad\text{and}\quad x'=P_0^{\mu^j}\! x' + \bar P^{\mu^j}\! x',
\]
and using \cref{eq:localization}, we see that there is some constant $C_0$ such that
\begin{equation}\label{eq:bracket}
\big|[x,x']\big| \le \big|[P_0^{\mu^j}\!x,P_0^{\mu^j}\!x']\big| + C_0 R_\ell R .
\end{equation}
Note that we have not use \cref{assumptionB} so far. Now, finally using \cref{assumptionB}, the first summand in \cref{eq:bracket} vanishes, and we have
\[
\big|[x,x']\big| \le C_0 R_\ell R .
\]
Thus, if we choose the constant $\bar C>0$ large enough,
\begin{align*}
| u-u'-\tfrac 1 2 [x',x]|
& \ge |u-u'| - \tfrac 1 2  C_0 R_\ell R\\
& \ge \bar C R_\ell R^{1+\gamma} - \tfrac 1 2  C_0 R_\ell R \gtrsim  R_\ell R^{1+\gamma},
\end{align*}
Hence, if $(x',u')\in A^{(\ell)}(\mu^j,u^0)$, then
\begin{align}
\int_{\bar A^{(\ell)}(\mu^j,u')} & \big|\mathcal K_{\ell,j}^{(\iota)}\big((x',u')^{-1}(x,u)\big)\big|\,d(x,u) \notag \\
& \lesssim (R_\ell R^{1+\gamma})^{-N} \int_{\tilde A^{(\ell)}(\mu^j)} | u-u'-\tfrac 1 2 [x',x]|^N \notag \\
& \hspace{3.7cm} \times \big|\mathcal K_{\ell,j}^{(\iota)}(x-x',u-u'-\tfrac 1 2 [x',x])\big|\,d(x,u)  \notag \\
& \le (R_\ell R^{1+\gamma})^{-N} | \tilde A^{(\ell)}(\mu^j)|^{1/2} \bigg(\int_G \big|| u|^N \mathcal K_{\ell,j}^{(\iota)}(x,u)\big|^2 \,d(x,u)\bigg)^{1/2}. \label{eq:apply-weighted-1}
\end{align}
Now the weighted Plancherel estimate of \cref{lem:Plancherel-2nd-layer} yields
\[
\bigg(\int_G \big||u|^N \mathcal K_{\ell,j}^{(\iota)}(x,u)\big|^2 \,d(x,u)\bigg)^{1/2} \lesssim_N R_\ell^{N-d_2/2} \|F^{(\iota)}\psi\|_{L^2_N}.
\]
Hence, the last line of \cref{eq:apply-weighted-1} is bounded by a constant times
\begin{equation}\label{eq:apply-weighted-2}
(R^{1+\gamma})^{-N} |\tilde A^{(\ell)}(\mu^j)|^{1/2} R_\ell^{-d_2/2} \|F^{(\iota)}\|_{L^2_N}.
\end{equation}
Since $|\tilde A^{(\ell)}(\mu^j)|\lesssim R^{d_1 + (2+\gamma)d_2}$ and $\smash{\|F^{(\iota)}\psi\|_{L^2_N}\lesssim R^N \|F^{(\iota)}\psi\|_2}$, \cref{eq:apply-weighted-2} can be bounded by a constant times
\[
R^{-\gamma N} R^{d_1 + (2+\gamma)d_2} R_\ell^{-d_2/2} \|F^{(\iota)}\psi\|_2.
\]
In particular, choosing $N=N(\gamma)$ large enough, we obtain
\begin{equation}\label{eq:interpolation-L1}
\big\| \,h_{\iota,\ell,j}\big|_{\g\setminus\tilde A^{(\ell)}(\mu^{j},u^0)} \big\|_1
 \lesssim_{\gamma,N} R^{-\gamma N} \|F^{(\iota)}\psi \|_2 \|f\|_1.
\end{equation}
Interpolation between the bounds \eqref{eq:interpolation-L1} and \eqref{eq:interpolation-L2} yields \cref{eq:rapid-decay-ii}.    
\end{proof}

\section{\texorpdfstring{Proof of Theorem \ref{thm:reduction}}{Proof of Theorem 4.1}}\label{sec:proof}

We first argue as in Step (1) to (4) of the proof of Proposition 7.1 in \cite{Ni25M}. First, using finite propagation speed and the left-invariance of $L$, it suffices to show that there is some $\varepsilon > 0$ such that
\begin{equation}\label{eq:reduction-1}
\big\| \textbf{1}_{B_{3R}^{d_{\mathrm{CC}}}} F^{(\iota)}(\sqrt L) f \big\|_p \lesssim 2^{-\varepsilon\iota} \|F^{(\iota)}\|_{L^2_s} \|f\|_p
\end{equation}
for all $\iota\in\N$ and all functions $f\in L^2(G)$ supported in the Carnot--Carathéodory ball $B_R^{d_{\mathrm{CC}}}$ centered at the origin, where $R=2^\iota$. 

Next, we put $\smash{\psi:=\sum_{|j|\le 2} \chi_j}$, where $(\chi_\iota)_{\iota \in\Z}$ is the dyadic decomposition of the beginning of \cref{sec:reduction}, and decompose $\smash{F^{(\iota)}}$ as
\[
F^{(\iota)}=F^{(\iota)}\psi + F^{(\iota)}(1-\psi).
\]
The second summand is an error term, which can be treated the Mikhlin--Hörmander type result of \cite{Ch91} and \cite{MaMe90}. Thus, instead of \cref{eq:reduction-1}, we have to show that there is some $\varepsilon > 0$ such that
\begin{equation}
\big\| \textbf{1}_{B_{3R}^{d_{\mathrm{CC}}}} (F^{(\iota)}\psi)(\sqrt L) f \big\|_p \lesssim 2^{-\varepsilon\iota} \|F^{(\iota)}\|_{L^2_s} \|f\|_p \label{eq:reduction-2}
\end{equation}
for all $\iota\in\N$  and for all $f\in L^2(G)$ supported in $B_R^{d_{\mathrm{CC}}}$, where $R=2^\iota$.

We further decompose $(F^{(\iota)}\psi)(\sqrt L)$. For $\ell\in\Z$, let $F_\ell^{(\iota)} : \R^2 \to \C$ be given by
\[
F_\ell^{(\iota)}(\lambda,\rho) = (F^{(\iota)}\psi) (\sqrt \lambda)\chi(2^\ell\rho)\quad\text{for }\lambda\geq 0
\]
and $F_\ell^{(\iota)}(\lambda,\rho)=0$ else. By \cref{lem:Plancherel}, there is a number $\ell_0\in\N$ depending only on the matrices $J_\mu$ and the inner product $\langle\cdot,\cdot\rangle$ on~$\g$ such that
\[
F_\ell^{(\iota)}(L,U) = 0 \quad \text{for all } \ell < -\ell_0,
\]
where $U = |\mathbf{U}|^{1/2}$ and $\mathbf{U} = (-iU_1, \dots, -iU_{d_2})$ with $U_1, \dots, U_{d_2}$ forming a basis of the second layer $\mathfrak{g}_2$.
 
Hence, we can decompose the function on the left-hand side of \eqref{eq:reduction-2} as
\begin{align*}
\mathbf{1}_{B_{3R}^{d_{\mathrm{CC}}}} (F^{(\iota)}\psi)(\sqrt L) f
& = \mathbf{1}_{B_{3R}^{d_{\mathrm{CC}}}} \bigg(\sum_{\ell =-\ell_0}^\iota + \sum_{\ell = \iota + 1}^\infty \bigg) F_\ell^{(\iota)}(L,U) f.
\end{align*}

Using the restriction type estimate of \cref{prop:p=ST} for the special case where $\delta$ is chosen so large that we only have one single cap $V_j$, we obtain
\begin{equation}\label{eq:restriction-type-ia}
\| F_\ell^{(\iota)}(L,U) \|_{p\to 2}
\lesssim 2^{-\ell d_2(\frac 1p - \frac 1 2)} \| (F^{(\iota)}\psi)(\sqrt{\cdot}\,)\|_{2^{\ell},2}.
\end{equation}
By the Sobolev type embedding of \cref{eq:sobolev-embedding}, since $\supp \psi\subseteq \R^+$, we get
\begin{equation}\label{eq:sobolev-type-ii}
\| (F^{(\iota)}\psi)(\sqrt{\cdot}\,)\|_{2^{\ell},2} \lesssim_{\tilde s} \| F^{(\iota)} \|_2 + 2^{-\ell \tilde s} \| F^{(\iota)} \|_{L^2_{\tilde s}} \quad \text{for every } \tilde s>1/2.
\end{equation}
Recall that $|B_R^{d_{\mathrm{CC}}}| \sim R^{d_1+2d_2}$ by \eqref{eq:cc-volume}. Using Hölder's inequality (with $1/q=1/p-1/2$) in combination with \cref{eq:restriction-type-ia} and \cref{eq:sobolev-type-ii}, we get
\begin{align*}
& \bigg \|  \mathbf{1}_{B_{3R}^{d_{\mathrm{CC}}}} \sum_{\ell = \iota + 1}^\infty F_\ell^{(\iota)}(L,U) f \bigg \|_p
   \lesssim R^{(d_1+2d_2)/q} \sum_{\ell = \iota + 1}^\infty \| F_\ell^{(\iota)}(L,U) f \|_2 \\
 & \lesssim 2^{\iota (d_1+2d_2)/q}\sum_{\ell = \iota + 1}^\infty 2^{-\ell d_2/q} \big ( \| F^{(\iota)} \|_2 + 2^{-\ell \tilde s} \| F^{(\iota)} \|_{L^2_{\tilde s}} \big) \|f\|_p\\
 & \lesssim 2^{\iota d/q}  \big( \|F^{(\iota)}\|_2 + 2^{-\iota \tilde s} \|F^{(\iota)}\|_{L^2_{\tilde s}} \big) \|f\|_p
  \sim \|F^{(\iota)}\|_{L^2_{d/q}} \|f\|_p.
\end{align*}
The last term is comparable to $2^{-\varepsilon \iota} \|F^{(\iota)}\|_{L^2_{s}}\|f\|_p$ if we choose $\varepsilon\in (0,s-d/q)$.

Altogether, we are left showing that there is some $\varepsilon>0$ such that
\begin{equation}\label{eq:small-ell}
\bigg\| \mathbf{1}_{B_{3R}^{d_{\mathrm{CC}}}} \sum_{\ell =-\ell_0}^\iota  F_\ell^{(\iota)}(L,U) f\bigg\|_p
 \lesssim  2^{-\varepsilon\iota} \|F^{(\iota)}\|_{L^2_s} \|f\|_p
\end{equation}
for all $\iota\in\N$  and for all $f\in L^2(G)$ supported in $B_R^{d_{\mathrm{CC}}}$.

We further refine the decomposition of $F_\ell^{(\iota)}(L,U)$. Let $\zeta_j$ be as in \cref{sec:restriction-caps}, that is, $\zeta_j|_{S^{d_2-1}}$ is supported on a cap of size $\delta$, and let again $\mu^j\in \supp \zeta_j$. As in \cref{prop:rapid-decay-1st-layer}, we choose $\delta=R/R_\ell$. 
Let $\smash{F_{\ell,j}^{(\iota)} : \R\times \R^2 \to \C}$ be given by
\[
F_{\ell,j}^{(\iota)}(\lambda,\mu) = F_\ell^{(\iota)}(\lambda,|\mu|)\,\zeta_j(\mu)\quad\text{for }\lambda\in\R,\mu\in\R^{d_2}.
\]
Then
\[
F_\ell^{(\iota)}(L,U) = \sum_{j\in J} F_{\ell,j}^{(\iota)}(L,\mathbf U),
\]
where $J\sim \delta^{-(d_2-1)}$.
We use the bound
\begin{equation}\label{eq:triangle-caps}
\bigg\| \mathbf{1}_{B_{3R}^{d_{\mathrm{CC}}}} \sum_{\ell =-\ell_0}^\iota  F_\ell^{(\iota)}(L,U) f\bigg\|_p
\le \sum_{\ell =-\ell_0}^\iota \sum_{j\in J} \big\| \mathbf{1}_{B_{3R}^{d_{\mathrm{CC}}}}   F_{\ell,j}^{(\iota)}(L,\mathbf U) f \big\|_p,
\end{equation}
where each summand will be treated individually. 

\smallskip

\subsection{Localization on the first layer}

Let $C>0$ be as in \cref{eq:C}, that is,
\[
B_R^{d_{\mathrm{CC}}} \subseteq B_{CR} \times B_{CR^2}.
\]
For all $\ell \in\{-\ell_0,\dots,\iota\}$ and $j\in J$, there are $x_{m,j}^{(\ell)}\in \g_1$ and disjoint sets $B_m^{(\ell)}(\mu^{j})$, $m \in\{1,\dots,M_\ell\}$, such that
\[
|x_{m}^{(\ell)}|<CR \quad\text{and}  \quad |P^{\mu^j}\!(x_{m,j}^{(\ell)} - x_{m',j}^{(\ell)})| > R_\ell/2 \quad\text{for }  m\neq m',
\]
and $B_m^{(\ell)}(\mu^{j})$ is contained in the ball
\[
B^{(\ell)}(x_{m,j}^{(\ell)},\mu^{j}) = \big\{ (x,u) \in B_{CR}\times B_{CR^2} :  |\bar P^{\mu^j} \! (x-x_{m,j}^{(\ell)}) | < CR_\ell \big\}.
\]
Then, for any $\gamma>0$, the number of overlapping balls
\[
\tilde B_m^{(\ell)}(x_{m,j}^{(\ell)},\mu^{j}) =  \big\{ (x,u) \in B_{3CR}\times B_{9CR^2} :  |\bar P^{\mu^j} \! (x-x^0) | < 6\kappa C R_\ell R^\gamma\big\}.
\]
is bounded by a constant $N_\gamma \lesssim_\iota 1$ (which is independent of $\ell$), where the notation
\begin{equation}\label{eq:iota-notation}
A\lesssim_\iota B
\end{equation}
means that $A\le R^{C(p,d_1,d_2)\gamma} B$ for some constant $C(p,d_1,d_2)>0$ depending only on the parameters $p,d_1,d_2$.

Using this decomposition, we write the left-hand side of \cref{eq:small-ell} as
\[
\mathbf{1}_{B_{3R}^{d_{\mathrm{CC}}}} F_{\ell,j}^{(\iota)}(L,\mathbf U) f
= \sum_{m=1}^{M_{\ell}} \Big(\mathbf{1}_{\tilde B_{m}^{(\ell)}(\mu^j)}+\mathbf{1}_{\g\setminus\tilde B_{m}^{(\ell)}(\mu^j)}\Big) \mathbf{1}_{B_{3R}^{d_{\mathrm{CC}}}} F_{\ell,j}^{(\iota)}(L,\mathbf U) \big(f|_{B_{m}^{(\ell)}(\mu^j)}\big).
\]
Since $M_\ell\sim (R/R_\ell)^{d_1}\lesssim R^{d_1}$, the second summands are negligible thanks to the rapid decay from \cref{prop:rapid-decay-1st-layer}, where we use the Sobolev embedding $L^\infty \subseteq L^2_\alpha$, $\alpha>1/2$. Indeed, we obtain
\[
\sum_{\ell =-\ell_0}^\iota \sum_{j\in J} \sum_{m=1}^{M_\ell} \big\| \mathbf{1}_{B_{3R}^{d_{\mathrm{CC}}}\cap (\g\setminus\tilde B_{m}^{(\ell)}(\mu^j))}   F_{\ell,j}^{(\iota)}(L,\mathbf U) f \big\|_p \lesssim_{\gamma,N}  R^{-\gamma N} \|F^{(\iota)}\|_{2} \|f\|_p.
\]
The parameter $\gamma>0$ will be chosen later in the proof.

Hence, in place of \cref{eq:triangle-caps}, it suffices to show
\begin{equation}\label{eq:small-ell-ii}
\sum_{\ell=-\ell_0}^\iota \sum_{j\in J} 
\big\| \sum_{m=1}^{M_{\ell}}  \mathbf{1}_{\tilde B_m^{(\ell)}(\mu^j)\cap B_{3R}^{d_{\mathrm{CC}}}} F_{\ell,j}^{(\iota)}(L,\mathbf U) \big(f|_{B_{m}^{(\ell)}(\mu^j)}\big) \big\|_p
 \lesssim  2^{-\varepsilon\iota} \|F^{(\iota)}\|_{L^2_s} \|f\|_p.
\end{equation}
On the other hand, to prove \cref{eq:small-ell-ii}, it suffices to show
\begin{equation}\label{eq:main-term-c}
\big\| \mathbf{1}_{\tilde B_m^{(\ell)}(\mu^j)\cap B_{3R}^{d_{\mathrm{CC}}}} F_{\ell,j}^{(\iota)}(L,\mathbf U) \big(f|_{B_{m}^{(\ell)}(\mu^j)}\big) \big\|_p
 \lesssim J^{-1} 2^{-\varepsilon\iota} \|F^{(\iota)}\|_{L^2_s} \,\big\|f|_{B_{m}^{(\ell)}(\mu^j)}\big\|_p.
\end{equation}
Indeed, thanks to the bounded overlap of the balls $\tilde B_{m}^{(\ell)}(\mu^j)$, given \cref{eq:main-term-c}, we obtain
\begin{align*}
& \sum_{\ell=-\ell_0}^\iota \sum_{j\in J} 
\big\| \sum_{m=1}^{M_{\ell}}  \mathbf{1}_{\tilde B_m^{(\ell)}(\mu^j)\cap B_{3R}^{d_{\mathrm{CC}}}} F_{\ell,j}^{(\iota)}(L,\mathbf U) \big(f|_{B_{m}^{(\ell)}(\mu^j)}\big) \big\|_p \\
& \lesssim \sum_{\ell=-\ell_0}^\iota \sum_{j\in J} \bigg( \sum_{m=1}^{M_{\ell}} \big\| \mathbf{1}_{\tilde B_m^{(\ell)}(\mu^j)\cap B_{3R}^{d_{\mathrm{CC}}}} F_{\ell,j}^{(\iota)}(L,\mathbf U) \big(f|_{B_{m}^{(\ell)}(\mu^j)}\big) \big\|_p^p \bigg)^{\frac 1 p} \\
& \lesssim \sum_{\ell=-\ell_0}^\iota \sum_{j\in J}J^{-1}  2^{-\varepsilon\iota}\|F^{(\iota)}\|_{L^2_s}  \bigg( \sum_{m=1}^{M_{\ell}} \big\|  f|_{B_{m}^{(\ell)}(\mu^j)} \big\|_p^p \bigg)^{\frac 1 p} \\
& \lesssim 2^{-\tilde \varepsilon\iota} \|F^{(\iota)}\|_{L^2_s} \|f\|_p \qquad \text{for } 0<\tilde\varepsilon<\varepsilon.
\end{align*}

\subsection{Translation back to the origin}

To show \cref{eq:small-ell}, we use translation invariance to translate the balls $B_{m}^{(\ell)}(\mu^j)$ and $\tilde B_{m}^{(\ell)}(\mu^j)$ to the origin.

Let $\lambda_{(x_{m,j}^{(\ell)},0)}$ denote the left multiplication by $(x_{m,j}^{(\ell)},0)$ on $G$. Note that
\begin{equation}\label{eq:xu-translated}
\lambda_{(x_{m,j}^{(\ell)},0)}(x,u)\in B_{m}^{(\ell)}(\mu^j)
\end{equation}
implies $|\bar P^{\mu^j} \! x | < CR_\ell$ and $\big|u+\tfrac 1 2 [x_{m,j}^{(\ell)},x]\big| \le C R^2$, and thus in particular
\[
|u| \lesssim R^2 + |x_{m,j}^{(\ell)}|\left|x\right| \lesssim R^2.
\]
Thus, there is a constant $C_2>0$ such that \cref{eq:xu-translated} implies $(x,u)\in A^{(\ell)}(\mu^j)$, where
\[
A^{(\ell)}(\mu^{j}) := \big\{ (x,u) \in B_{C R}\times B_{C_2 R^2} :  |\bar P^{\mu^j} \! x | < C R_\ell \big\}.
\]
Similarly, if
\[
\lambda_{(x_{m,j}^{(\ell)},0)}(x,u)\in \tilde B_{m}^{(\ell)}(\mu^j)
\]
then $(x,u)\in \tilde A^{(\ell)}(\mu^{j})$, where
\[
\tilde A^{(\ell)}(\mu^{j}) := \big\{ (x,u) \in B_{3C R}\times B_{9C_2 R^{2+\gamma}} : |\bar P^{\mu^j} \! x | < 6\kappa C R_\ell R^\gamma \big\},
\]
provided that $C_2>0$ is chosen sufficiently large. Since $F_{\ell,j}^{(\iota)}(L,\mathbf U)$ is left-invariant,
\[
\big(F_{\ell,j}^{(\iota)}(L,\mathbf U) \big(f|_{B_{m}^{(\ell)}(\mu^j)}\big)\big)(\lambda_{(x_{m,j}^{(\ell)},0)}(x,u)) = 
F_{\ell,j}^{(\iota)}(L,\mathbf U) \tilde f_{m,j}^{(\ell)}(x,u),
\]
where $\tilde f_{m,j}^{(\ell)}(x,u) := f_{m,j}^{(\ell)}(\lambda_{(x_{m,j}^{(\ell)},0)}(x,u))$ is supported in $A^{(\ell)}(\mu^j)$. Note that
\[
\big\|\mathbf{1}_{\tilde B_{m}^{(\ell)}(\mu^j)} F_{\ell,j}^{(\iota)}(L,\mathbf U) \big(f|_{B_{m}^{(\ell)}(\mu^j)}\big) \big\|_p
 = \|\mathbf{1}_{\tilde A^{(\ell)}(\mu^j)} F_{\ell,j}^{(\iota)}(L,\mathbf U) \tilde f_{m,j}^{(\ell)} \|_p.
\]
Thus, in place of \cref{eq:main-term-c}, it suffices to show that there is some $\varepsilon>0$ such that
\begin{equation}\label{eq:main-term-d}
\|\mathbf{1}_{\tilde A^{(\ell)}(\mu^j)} F_{\ell,j}^{(\iota)}(L,\mathbf U) f\|_p \lesssim  J^{-1} 2^{-\varepsilon\iota} \|F^{(\iota)}\|_{L^2_s}\|f\|_p
\end{equation}
for all functions $f$ supported in $A^{(\ell)}(\mu^j)$. 

\subsection{Localization on the second layer}

We decompose $A^{(\ell)}(\mu^j)$ as
\[
A^{(\ell)}(\mu^j) = \bigcup_{k=1}^{K_\ell} A^{(\ell)}_k(\mu^j),
\]
where $A^{(\ell)}_k(\mu^j)\subseteq A^{(\ell)}(\mu^j)$ are disjoint subsets such that
\[
A^{(\ell)}_k(\mu^j) \subseteq \big\{ (x,u) \in B_{C R}\times B_{C_2 R^2} :  |\bar P^{\mu^j} \! x | < C R_\ell \text{ and } |u-u^{(\ell)}_k|< C_2 R_\ell R \big\},
\]
and $u^{(\ell)}_k\in A^{(\ell)}(\mu^j)$ satify $|u^{(\ell)}_k - u^{(\ell)}_{k'}| > R_\ell R/2$ for $k\neq k'$. Moreover, let
\[
\tilde A^{(\ell)}_k(\mu^j) :=\{ (x,u) \in \tilde A^{(\ell)}(\mu^j) : |u- u^{(\ell)}_k| < 6\kappa C R_\ell R^{1+\gamma} \}.
\]
Then the maximal number $N_\gamma$ of overlapping sets $\tilde A^{(\ell)}_k(\mu^j)$ is bounded by $N_\gamma \lesssim_\iota 1$. Now, if $f$ is supported in $A^{(\ell)}(\mu^j)$, we decompose it as
\[
f = \sum_{k=1}^{K_\ell} f|_{A^{(\ell)}_k(\mu^j)}.
\]
We decompose the function $h_{k,j}^{(\ell)}:=\mathbf{1}_{\tilde A^{(\ell)}(\mu^j)} F_{\ell,j}^{(\iota)}(L,\mathbf U) ( f|_{A^{(\ell)}_k(\mu^j)})$ as
\[
h_{k,j}^{(\ell)} = \mathbf{1}_{\tilde A^{(\ell)}_k(\mu^j)} h_{k,j}^{(\ell)} + \mathbf{1}_{\g\setminus\tilde A^{(\ell)}_k(\mu^j)} h_{k,j}^{(\ell)}.
\]
Since $K_\ell\lesssim (R/R_\ell)^{d_2}\lesssim R^{d_2}$, \cref{prop:rapid-decay-2nd-layer} yields
\[
\bigg\| \sum_{k=1}^{K_{\ell}} \mathbf{1}_{\g\setminus\tilde A^{(\ell)}_k(\mu^j)} h_{k,j}^{(\ell)} \bigg\|_p
 \lesssim_{\gamma,N} R^{-\gamma N} \|F^{(\iota)}\|_2 \|f\|_p,\quad N>0.
\]
On the other hand, since the balls $\tilde A^{(\ell)}_k(\mu^j)$ have bounded overlap, we obtain
\[
\bigg\| \sum_{k=1}^{K_{\ell}}  \mathbf{1}_{\tilde A^{(\ell)}_k(\mu^j)} h_{k,j}^{(\ell)} \bigg\|_p
\lesssim_\iota 
\bigg( \sum_{k=1}^{K_{\ell}} \| \mathbf{1}_{\tilde A^{(\ell)}_k(\mu^j)} h_{k,j}^{(\ell)} \|_p^p \bigg)^{1/p}.
\]
Moreover, note that
\[
\bigg( \sum_{k=1}^{K_{\ell}} \|f|_{A^{(\ell)}_k(\mu^j)}\|_p^p \bigg)^{1/p}
= \|f\|_p.
\]
Thus, to prove \cref{eq:main-term-d}, it suffices to show that there is some $\varepsilon>0$ such that
\begin{equation}\label{eq:main-term-e}
\big\| \mathbf{1}_{\tilde A^{(\ell)}_k(\mu^j)} F_{\ell,j}^{(\iota)}(L,\mathbf U) f \big\|_p \lesssim J^{-1} 2^{-\varepsilon\iota} \|F^{(\iota)}\|_{L^2_s} \|f\|_p
\end{equation}
for all functions $f$ supported in $A^{(\ell)}_k(\mu^j)$.

\subsection{Applying the restriction type estimate}

We apply the restriction type estimate of \cref{thm:restriction-type} for caps of size $\delta = R/R_\ell$. Recall that our assumptions imply in particular $d_1\ge d_2$, whence
\[
\ST{d_1,d_2} = \min \{\ST{d_1},\ST{d_2}\} = \ST{d_2}.
\]
Thus, \cref{thm:restriction-type} yields
\begin{multline}\label{eq:restriction-iii}
\norm{ F_{\ell,j}^{(\iota)}(L,\mathbf U) }_{p\to 2}
\lesssim R_\ell^{-d_2/q} \Big(\frac{R_\ell}{R}\Big)^{\frac{d_2-1} 2 (1-\theta_p)} \|(F^{(\iota)}\psi)(\sqrt{\cdot}\,)\|_2^{1-\theta_p} \\ \times \|(F^{(\iota)}\psi)(\sqrt{\cdot}\,)\|_{2^{\ell},2}^{\theta_p},
\end{multline}
where $\theta_p\in [0,1]$ satisfies $1/p = (1-\theta_p) + \theta_p/\ST{d_2}$. 
Using the same Sobolev type embedding as in \cref{eq:sobolev-type-ii}, we can bound the right-hand side of \cref{eq:restriction-iii} by a constant times
\[
R_\ell^{-d_2/q} \Big(\frac {R_\ell} R \Big)^{\frac{d_2-1} 2 (1-\theta_p) - \theta_p \tilde s} \|F^{(\iota)}\|_2 \quad \text{for all } \tilde s >1/2.
\]
Let $J_\mu$ be again be the matrix of \cref{eq:skew-form-ii}. Recall that $\mu \mapsto \dim \ker J_\mu$ is constant on $\g_2^*\setminus\{0\}$ by \cref{assumptionA}. Let $r_0:=\dim \ker J_\mu$ for $\mu\neq 0$ and $\bar d_1:=d_1-r_0$. Then
\[
|\tilde A^{(\ell)}_k(\mu^j)|\lesssim R^{r_0} R_\ell^{\bar d_1} (R_\ell R)^{d_2}
\]
Let again $q\in [2,\infty)$ be such that $1/q=1/p-1/2$. Using Hölder's inequality in combination with the restriction type estimate \cref{eq:restriction-iii}, we get
\begin{align}
& \big\| \mathbf{1}_{\tilde A^{(\ell)}_k(\mu^j)} F_{\ell,j}^{(\iota)}(L,\mathbf U) f \big\|_p
 \lesssim_\iota \big( R^{r_0} R_\ell^{\bar d_1} (R_\ell R)^{d_2} \big)^{1/q} \| F_{\ell,j}^{(\iota)}(L,\mathbf U) f\|_2 \notag \\
 & \lesssim \big( R^{r_0} R_\ell^{\bar d_1} (R_\ell R)^{d_2} \big)^{1/q} R_\ell^{-d_2/q} \Big(\frac {R_\ell} R \Big)^{\frac{d_2-1} 2 (1-\theta_p) - \theta_p \tilde s} \|F^{(\iota)}\|_2 \|f\|_p. \label{eq:after-H}
\end{align}
Using
\[
\|F^{(\iota)}\|_2\sim  R^{-d/q} \|F^{(\iota)}\|_{L^2_{d/q}} =  R^{-(r_0+\bar d_1+d_2)/q} \|F^{(\iota)}\|_{L^2_{d/q}},
\]
we see that \cref{eq:after-H} is comparable to
\[
\Big(\frac {R_\ell} R \Big)^{\bar d_1/q + \frac{d_2-1} 2 (1-\theta_p) - \theta_p \tilde s} \|F^{(\iota)}\|_{L^2_{d/q}} \|f\|_p.
\]
Recall that the number of caps in our decomposition is
\[
J \sim \delta^{-(d_2-1)} = \Big(\frac {R_\ell} R \Big)^{-(d_2-1)}.
\]
This implies
\begin{multline}
J\, \| \mathbf{1}_{\tilde A^{(\ell)}_k(\mu^j)} F_{\ell,j}^{(\iota)}(L,\mathbf U) f \big\|_p \lesssim
\Big(\frac {R_\ell} R \Big)^{\bar d_1/q + \frac{d_2-1} 2 (1-\theta_p) - \theta_p \tilde s - (d_2-1)}  \\ \times \|F^{(\iota)}\|_{L^2_{d/q}} \|f\|_p. \label{eq:exponent}
\end{multline}
To establish \cref{eq:main-term-e} and complete the proof of \cref{thm:reduction}, it suffices to ensure that
\begin{equation}\label{eq:final-condition}
\frac {\bar d_1} q - \theta_p \frac{d_2} 2 - \frac{d_2-1} 2 \ge 0.
\end{equation}
Indeed, suppose that \cref{eq:final-condition} is satisfied. Note that
\[
\|F^{(\iota)}\|_{L^2_{d/q}}\sim R^{d/q-s}\|F^{(\iota)}\|_{L^2_{s}}
\]
and that we have assumed $s>d/q$ in \cref{thm:reduction}. Thus, choosing $\gamma>0$ small enough and $\tilde s > 1/2$ sufficiently close to $1/2$, we can ensure that the exponent in \cref{eq:exponent} is positive.

Using the identities
\[
\frac{1}{q} = \frac 1 2 - \frac{1}{p'} \quad \text{and} \quad
\theta_p = \frac{\ST{d_2}'}{p'},
\]
we see that \cref{eq:final-condition} is equivalent to
\[
\frac{1}{p'} \le \frac{\bar d_1-(d_2-1)}{2\bar d_1 + d_2\,\ST{d_2}'}.
\]
Since
\[
\ST{d_2}'=\frac{2(d_2+1)}{d_2-1},
\]
this in turn is equivalent to
\[
p
 \le \frac{2\bar d_1 +d_2\,\ST{d_2}'}{\bar d_1 + d_2\,\ST{d_2}' +(d_2-1)} \\
 = 
\frac{2\bar d_1(d_2-1) +2d_2(d_2+1)}{\bar d_1(d_2-1) + 3d_2^2+1}.
\]
This finishes the proof of \cref{thm:reduction}.\hfill$\square$

\begin{bibdiv}
\begin{biblist}

\bib{AhCoMaMue20}{article}{
      author={Ahrens, Julian},
      author={Cowling, Michael~G.},
      author={Martini, Alessio},
      author={M\"{u}ller, Detlef},
       title={Quaternionic spherical harmonics and a sharp multiplier theorem
  on quaternionic spheres},
        date={2020},
        ISSN={0025-5874},
     journal={Math. Z.},
      volume={294},
      number={3-4},
       pages={1659\ndash 1686},
         url={https://doi.org/10.1007/s00209-019-02313-w},
      review={\MR{4074054}},
}

\bib{Ch85}{article}{
      author={Christ, Michael},
       title={On almost everywhere convergence of {B}ochner-{R}iesz means in
  higher dimensions},
        date={1985},
        ISSN={0002-9939},
     journal={Proc. Amer. Math. Soc.},
      volume={95},
      number={1},
       pages={16\ndash 20},
         url={https://doi.org/10.2307/2045566},
      review={\MR{796439}},
}

\bib{Ch91}{article}{
      author={Christ, Michael},
       title={{$L^p$} bounds for spectral multipliers on nilpotent groups},
        date={1991},
        ISSN={0002-9947},
     journal={Trans. Amer. Math. Soc.},
      volume={328},
      number={1},
       pages={73\ndash 81},
         url={https://doi.org/10.2307/2001877},
      review={\MR{1104196}},
}

\bib{CoKlSi11}{article}{
      author={Cowling, Michael~G.},
      author={Klima, Oldrich},
      author={Sikora, Adam},
       title={Spectral multipliers for the {K}ohn sublaplacian on the sphere in
  {$\mathbb C^n$}},
        date={2011},
        ISSN={0002-9947},
     journal={Trans. Amer. Math. Soc.},
      volume={363},
      number={2},
       pages={611\ndash 631},
         url={https://doi.org/10.1090/S0002-9947-2010-04920-7},
      review={\MR{2728580}},
}

\bib{ChOu16}{article}{
      author={Chen, Peng},
      author={Ouhabaz, El~Maati},
       title={Weighted restriction type estimates for {G}rushin operators and
  application to spectral multipliers and {B}ochner-{R}iesz summability},
        date={2016},
        ISSN={0025-5874},
     journal={Math. Z.},
      volume={282},
      number={3-4},
       pages={663\ndash 678},
         url={https://doi.org/10.1007/s00209-015-1558-9},
      review={\MR{3473637}},
}

\bib{CoSi01}{article}{
      author={Cowling, Michael},
      author={Sikora, Adam},
       title={A spectral multiplier theorem for a sublaplacian on
  {$\mathrm{SU}(2)$}},
        date={2001},
        ISSN={0025-5874},
     journal={Math. Z.},
      volume={238},
      number={1},
       pages={1\ndash 36},
         url={https://doi.org/10.1007/PL00004894},
      review={\MR{1860734}},
}

\bib{DaMa20}{article}{
      author={Dall'Ara, Gian~Maria},
      author={Martini, Alessio},
       title={A robust approach to sharp multiplier theorems for {G}rushin
  operators},
        date={2020},
        ISSN={0002-9947},
     journal={Trans. Amer. Math. Soc.},
      volume={373},
      number={11},
       pages={7533\ndash 7574},
         url={https://doi.org/10.1090/tran/7844},
      review={\MR{4169667}},
}

\bib{DaMa22b}{article}{
      author={Dall'Ara, Gian~Maria},
      author={Martini, Alessio},
       title={An optimal multiplier theorem for {G}rushin operators in the
  plane, {II}},
        date={2022},
        ISSN={1069-5869},
     journal={J. Fourier Anal. Appl.},
      volume={28},
      number={2},
       pages={Paper No. 32, 29},
         url={https://doi.org/10.1007/s00041-022-09931-9},
      review={\MR{4402142}},
}

\bib{Fe73}{article}{
      author={Fefferman, Charles},
       title={A note on spherical summation multipliers},
        date={1973},
        ISSN={0021-2172},
     journal={Israel J. Math.},
      volume={15},
       pages={44\ndash 52},
         url={https://doi.org/10.1007/BF02771772},
      review={\MR{320624}},
}

\bib{FoSt82}{book}{
      author={Folland, Gerald~B.},
      author={Stein, Elias~M.},
       title={Hardy spaces on homogeneous groups},
      series={Mathematical Notes},
   publisher={Princeton University Press, Princeton, N.J.; University of Tokyo
  Press, Tokyo},
        date={1982},
      volume={28},
        ISBN={0-691-08310-X},
      review={\MR{657581}},
}

\bib{He93}{article}{
      author={Hebisch, Waldemar},
       title={Multiplier theorem on generalized {H}eisenberg groups},
        date={1993},
        ISSN={0010-1354},
     journal={Colloq. Math.},
      volume={65},
      number={2},
       pages={231\ndash 239},
         url={https://doi.org/10.4064/cm-65-2-231-239},
      review={\MR{1240169}},
}

\bib{Hoe60}{article}{
      author={H{\"{o}}rmander, Lars},
       title={Estimates for translation invariant operators in {$L^{p}$}
  spaces},
        date={1960},
        ISSN={0001-5962},
     journal={Acta Math.},
      volume={104},
       pages={93\ndash 140},
         url={https://doi.org/10.1007/BF02547187},
      review={\MR{121655}},
}

\bib{LiZh11}{article}{
      author={Liu, Heping},
      author={Wang, Yingzhan},
       title={A restriction theorem for the {H}-type groups},
        date={2011},
        ISSN={0002-9939},
     journal={Proc. Amer. Math. Soc.},
      volume={139},
      number={8},
       pages={2713\ndash 2720},
         url={https://doi.org/10.1090/S0002-9939-2011-10907-9},
      review={\MR{2801610}},
}

\bib{Ma11}{article}{
      author={Martini, Alessio},
       title={Spectral theory for commutative algebras of differential
  operators on {L}ie groups},
        date={2011},
        ISSN={0022-1236},
     journal={J. Funct. Anal.},
      volume={260},
      number={9},
       pages={2767\ndash 2814},
         url={https://doi.org/10.1016/j.jfa.2011.01.008},
      review={\MR{2772351}},
}

\bib{Ma15}{article}{
      author={Martini, Alessio},
       title={Spectral multipliers on {H}eisenberg-{R}eiter and related
  groups},
        date={2015},
        ISSN={0373-3114},
     journal={Ann. Mat. Pura Appl. (4)},
      volume={194},
      number={4},
       pages={1135\ndash 1155},
         url={https://doi.org/10.1007/s10231-014-0414-6},
      review={\MR{3357697}},
}

\bib{Me84}{incollection}{
      author={Melrose, Richard},
       title={Propagation for the wave group of a positive subelliptic
  second-order differential operator},
        date={1986},
   booktitle={Hyperbolic equations and related topics ({K}atata/{K}yoto,
  1984)},
   publisher={Academic Press, Boston, MA},
       pages={181\ndash 192},
      review={\MR{925249}},
}

\bib{MaMue13}{article}{
      author={Martini, Alessio},
      author={M\"{u}ller, Detlef},
       title={{$L^p$} spectral multipliers on the free group {$N_{3,2}$}},
        date={2013},
        ISSN={0039-3223},
     journal={Studia Math.},
      volume={217},
      number={1},
       pages={41\ndash 55},
         url={https://doi.org/10.4064/sm217-1-3},
      review={\MR{3106049}},
}

\bib{MaMue14N}{article}{
      author={Martini, Alessio},
      author={M\"{u}ller, Detlef},
       title={Spectral multiplier theorems of {E}uclidean type on new classes
  of two-step stratified groups},
        date={2014},
        ISSN={0024-6115},
     journal={Proc. Lond. Math. Soc. (3)},
      volume={109},
      number={5},
       pages={1229\ndash 1263},
         url={https://doi.org/10.1112/plms/pdu033},
      review={\MR{3283616}},
}

\bib{MaMue16}{article}{
      author={Martini, Alessio},
      author={M\"{u}ller, Detlef},
       title={Spectral multipliers on 2-step groups: topological versus
  homogeneous dimension},
        date={2016},
        ISSN={1016-443X},
     journal={Geom. Funct. Anal.},
      volume={26},
      number={2},
       pages={680\ndash 702},
         url={https://doi.org/10.1007/s00039-016-0365-8},
      review={\MR{3513881}},
}

\bib{MaMe90}{article}{
      author={Mauceri, Giancarlo},
      author={Meda, Stefano},
       title={Vector-valued multipliers on stratified groups},
        date={1990},
        ISSN={0213-2230},
     journal={Rev. Mat. Iberoam.},
      volume={6},
      number={3-4},
       pages={141\ndash 154},
         url={https://doi.org/10.4171/RMI/100},
      review={\MR{1125759}},
}

\bib{MaMueGo23}{article}{
      author={Martini, Alessio},
      author={M\"uller, Detlef},
      author={Nicolussi~Golo, Sebastiano},
       title={Spectral multipliers and wave equation for sub-{L}aplacians:
  lower regularity bounds of {E}uclidean type},
        date={2023},
        ISSN={1435-9855,1435-9863},
     journal={J. Eur. Math. Soc. (JEMS)},
      volume={25},
      number={3},
       pages={785\ndash 843},
         url={https://doi.org/10.4171/jems/1191},
      review={\MR{4577953}},
}

\bib{MueSe04}{article}{
      author={M\"{u}ller, Detlef},
      author={Seeger, Andreas},
       title={Singular spherical maximal operators on a class of two step
  nilpotent {L}ie groups},
        date={2004},
        ISSN={0021-2172},
     journal={Israel J. Math.},
      volume={141},
       pages={315\ndash 340},
         url={https://doi.org/10.1007/BF02772226},
      review={\MR{2063040}},
}

\bib{MueSt94}{article}{
      author={M{\"{u}}ller, Detlef},
      author={Stein, Elias~M.},
       title={On spectral multipliers for {H}eisenberg and related groups},
        date={1994},
        ISSN={0021-7824},
     journal={J. Math. Pures Appl. (9)},
      volume={73},
      number={4},
       pages={413\ndash 440},
      review={\MR{1290494}},
}

\bib{Mue04}{misc}{
      author={M{\"{u}}ller, Detlef},
       title={{M}arcinkiewicz multipliers and multi-parameter structure on
  {H}eisenberg groups},
        date={2004},
        note={Lecture notes, Padova},
}

\bib{Mue90}{article}{
      author={M{\"{u}}ller, Detlef},
       title={A restriction theorem for the {H}eisenberg group},
        date={1990},
        ISSN={0003-486X},
     journal={Ann. of Math. (2)},
      volume={131},
      number={3},
       pages={567\ndash 587},
         url={https://doi.org/10.2307/1971471},
      review={\MR{1053491}},
}

\bib{Ni22}{article}{
      author={Niedorf, Lars},
       title={A {$p$}-specific spectral multiplier theorem with sharp
  regularity bound for {G}rushin operators},
        date={2022},
        ISSN={0025-5874},
     journal={Math. Z.},
      volume={301},
      number={4},
       pages={4153\ndash 4173},
         url={https://doi.org/10.1007/s00209-022-03029-0},
      review={\MR{4449743}},
}

\bib{Ni24}{article}{
      author={Niedorf, Lars},
       title={An {$L^p$}-spectral multiplier theorem with sharp {$p$}-specific
  regularity bound on {H}eisenberg type groups},
        date={2024},
        ISSN={1069-5869,1531-5851},
     journal={J. Fourier Anal. Appl.},
      volume={30},
      number={2},
       pages={Paper No. 22, 35},
         url={https://doi.org/10.1007/s00041-024-10075-1},
      review={\MR{4728249}},
}

\bib{Ni25R}{misc}{
      author={Niedorf, Lars},
       title={Restriction type estimates on general two-step stratified {L}ie
  groups},
        date={2024},
  note={\href{https://arxiv.org/abs/2304.12960}{https://arxiv.org/abs/2304.12960}},
}

\bib{Ni25M}{misc}{
      author={Niedorf, Lars},
       title={Spectral multipliers on {M}\'etivier groups},
        date={2025},
  note={\href{https://arxiv.org/abs/2412.07920}{https://arxiv.org/abs/2412.07920},
  accepted for publication in Studia Mathematica},
}

\bib{Se86}{article}{
      author={Seeger, Andreas},
       title={On quasiradial {Fourier} multipliers and their maximal
  functions},
    language={English},
        date={1986},
        ISSN={0075-4102},
     journal={J. Reine Angew. Math.},
      volume={370},
       pages={61\ndash 73},
}

\bib{SiWr01}{article}{
      author={Sikora, Adam},
      author={Wright, James},
       title={Imaginary powers of {L}aplace operators},
        date={2001},
        ISSN={0002-9939},
     journal={Proc. Amer. Math. Soc.},
      volume={129},
      number={6},
       pages={1745\ndash 1754},
         url={https://doi.org/10.1090/S0002-9939-00-05754-3},
      review={\MR{1814106}},
}

\bib{SiYaYa14}{article}{
      author={Sikora, Adam},
      author={Yan, Lixin},
      author={Yao, Xiaohua},
       title={Sharp spectral multipliers for operators satisfying generalized
  {G}aussian estimates},
        date={2014},
        ISSN={0022-1236},
     journal={J. Funct. Anal.},
      volume={266},
      number={1},
       pages={368\ndash 409},
         url={https://doi.org/10.1016/j.jfa.2013.08.028},
      review={\MR{3121735}},
}

\bib{Ta99}{article}{
      author={Tao, Terence},
       title={The {B}ochner-{R}iesz conjecture implies the restriction
  conjecture},
        date={1999},
        ISSN={0012-7094},
     journal={Duke Math. J.},
      volume={96},
      number={2},
       pages={363\ndash 375},
         url={https://doi.org/10.1215/S0012-7094-99-09610-2},
      review={\MR{1666558}},
}

\bib{Th93}{book}{
      author={Thangavelu, Sundaram},
       title={Lectures on {H}ermite and {L}aguerre expansions},
      series={Mathematical Notes},
   publisher={Princeton University Press, Princeton, NJ},
        date={1993},
      volume={42},
        ISBN={0-691-00048-4},
      review={\MR{1215939}},
}

\bib{VaSaCo92}{book}{
      author={Varopoulos, Nicholas~T.},
      author={Saloff-Coste, Laurent},
      author={Coulhon, Thierry},
       title={Analysis and geometry on groups},
      series={Cambridge Tracts in Mathematics},
   publisher={Cambridge University Press, Cambridge},
        date={1992},
      volume={100},
        ISBN={0-521-35382-3},
      review={\MR{1218884}},
}

\end{biblist}
\end{bibdiv}

\end{document}